\tikzset{individu/.style={draw,thick}}
\numberwithin{equation}{section}
\theoremstyle{plain}
\newtheorem{theorem}{Theorem}[section]
\newtheorem{corollary}[theorem]{Corollary}
\newtheorem{lemma}[theorem]{Lemma}
\newtheorem{proposition}[theorem]{Proposition}
\theoremstyle{definition}
\newtheorem{definition}[theorem]{Definition}
\theoremstyle{remark}
\newtheorem{remark}[theorem]{Remark}
\newtheorem{example}[theorem]{Example}
\definecolor{calpolypomonagreen}{rgb}{0, 0.6, 0.2}
\newcommand{\e}{\mathbf{e}}
\renewcommand{\v}{\mathbf{v}}
\newcommand{\vtilde}{\tilde{\mathbf{v}}}
\newcommand{\Rvec}{\mathbf{R}}
\renewcommand{\tilde}[1]{\widetilde{#1}}
\renewcommand{\epsilon}{\varepsilon}
\renewcommand{\phi}{\varphi}
\renewcommand{\e}{\mathrm{e}}
\newcommand{\wt}{\mathrm{wt}}
\newcommand{\row}{\mathrm{row}}
\newcommand{\sgn}{\mathrm{sgn}}
\title{Vector-relation configurations and plabic graphs}
\author{Niklas Affolter}
\address{Technische Universit\"at Berlin, Institute of Mathematics, Strasse des 17. Juni 136,
	10623 Berlin, Germany}
\author{Max Glick}
\address{Google Inc, Pittsburgh, PA 15206, USA}
\author{Pavlo Pylyavskyy}
\address{Department of Mathematics, University of Minnesota, Minneapolis, MN 55455, USA}
\author{Sanjay Ramassamy}
\address{Universit\'e Paris-Saclay, CNRS, CEA, Institut de Physique Th\'eorique, 91191 Gif-sur-Yvette, France}
\keywords{Pentagram map, plabic graphs, dimer model}
\thanks{N. A. was supported by the Deutsche Forschungsgemeinschaft (DFG) Collaborative Research
Center TRR 109 ``Discretization in Geometry and Dynamics'' and by the ENS-MHI chair funded
by MHI. N. A. and S. R. were partially supported by the Agence Nationale de la Recherche, Grant
Number ANR-18-CE40-0033 (ANR DIMERS). P. P. was partially supported by NSF grants DMS-1148634 and DMS-1351590. S. R. was also partially supported by the CNRS grant Tremplin@INP, which funded a visit of N. A. to Paris-Saclay, as well as by the Fondation Sciences Math\'ematiques de Paris.}
\begin{document}

\begin{abstract}
We study a simple geometric model for local transformations of bipartite graphs.  The state consists of a choice of a vector at each white vertex made in such a way that the vectors neighboring each black vertex satisfy a linear relation.  The evolution for different choices of the graph coincides with many notable dynamical systems including the pentagram map, $Q$-nets, and discrete Darboux maps.  On the other hand, for plabic graphs we prove unique extendability of a configuration from the boundary to the interior, an elegant illustration of the fact that Postnikov's boundary measurement map is invertible.  In all cases there is a cluster algebra operating in the background, resolving the open question for $Q$-nets of whether such a structure exists.
\end{abstract}

\maketitle

\section{Introduction} \label{sec:intro}

The dynamics of local transformations on weighted networks play a central role in a number of settings within algebra, combinatorics, and mathematical physics.  In the context of the dimer model on a torus, these local moves give rise to the discrete cluster integrable systems of Goncharov and Kenyon \cite{GK}.  Meanwhile, for plabic graphs in a disk, Postnikov transformations relate different parametrizations of positroid cells \cite{Pos06} which in turn define a stratification of the totally non-negative Grassmannian.

The dimer model also manifests itself in many geometrically defined dynamical systems.  We focus on projective geometry and draw our initial motivation from the pentagram map.  The pentagram map was defined by Schwartz \cite{S92} and related in \cite{G11} to coefficient-type cluster algebra dynamics \cite{FZ}.  Gekhtman, Shapiro, Tabachnikov, and Vainshtein  \cite{GSTV12,GSTV16} placed the pentagram map and certain generalizations in the context of weighted networks and derived a more conceptual take on the integrability property first proven by Ovsienko, Schwartz, and Tabachnikov \cite{OST1}.  Although considerable work in various directions of the subject has been undertaken, most relevant to our work is a further generalization termed $Y$-meshes \cite{GP16}.

We propose a simple but versatile geometric model for the space of edge weights of any bipartite graph modulo gauge equivalence, with applications to the fields of both geometric dynamics and plabic graphs. The induced dynamics of local transformations provides an analog of the pentagram map for every planar bipartite graph and includes as special cases generalized pentagram maps, $Q$-nets, and discrete Darboux maps.  This common generalization resolves a long standing question \cite[Remark 1.5]{GP16} of how the pentagram map and $Q$-nets relate.  Moreover, our systems come with cluster dynamics, which is new in the $Q$-net case and should be of interest to discrete differential geometers.  Lastly, in the setting of plabic graphs we define a geometric version of the boundary measurement map and its inverse.  In this language, properties of the boundary measurement map imply the unique solvability of a certain family of geometric realization problems. The geometric  model story runs parallel to the classical one of planar weighted bipartite graphs, with the concepts of gauge transformations, local transformations and face variables of the latter bearing simple geometric interpretations (see Sections \ref{sec:transform} and \ref{sec:face}) in the former.

\subsection{Overview of main definitions and results}
Our main object of study is a certain collection of geometric data, which we term a \emph{vector-relation configuration}, associated to a bipartite graph.  Roughly speaking, such a configuration consists of a choice of vector (from some fixed vector space) associated to each white vertex of the graph, with the property that the set of vectors neighboring each black vertex satisfy a linear relation.  The exact requirements vary a bit based on the context and are described in Definitions \ref{def:vr} and \ref{def:vr-plabic}.

In the case of a planar bipartite graph, we additionally define evolution equations of vector-relation configurations under local transformations.  In parallel with the dynamics of edge-weighted graphs, these operations preserve a notion of gauge equivalence.  In fact, we will show (Proposition \ref{prop:moves}) that these two stories are in some sense equivalent to each other.  At least locally, it is possible to go back and forth between edge weights and vector-relation configurations (with some genericity assumptions) in a manner that commutes with local transformations.  As a result, we can import much of the theory of the dimer model to our setting.  For instance we get face weights, which are simple to define geometrically in terms of multi-ratios (Proposition \ref{prop:faceweights}) and which satisfy simple, rational evolution equations.

For roughly the second half of the paper, we focus our attention on plabic graphs in a disk.  We assume all boundary vertices are white, meaning that a vector-relation configuration on such a graph includes a vector at each boundary vertex.  Although local transformations are also of interest in this case, we focus on global questions concerning the space of all configurations given fixed $G$.  The main result, which in isolation is rather striking, is that a configuration is uniquely determined up to gauge by its boundary vectors.

To state this result more precisely and give relevant context, we recall that each plabic graph gives rise to a combinatorial object called a positroid and a geometric object called a positroid variety.  Let $G$ be a plabic graph.  We will let $\mathcal{M}$ denote the associated positroid and $\Pi_{\mathcal{M}}$ the associated positroid variety.  A fundamental object in this area is the boundary measurement map which takes as input an edge-weighting of $G$ and outputs a point of $\Pi_{\mathcal{M}}$.

\begin{theorem} \label{thm:plabic}
Fix a plabic graph $G$.
\begin{enumerate}
\item Given a vector-relation configuration on $G$, the matrix $A = [v_1 \cdots v_n]$ whose columns are the boundary vectors of the configuration lies in the positroid variety $\Pi_{\mathcal{M}}$.
\item Suppose $G$ is reduced.  There is a dense subset $T_G \subseteq \Pi_{\mathcal{M}}$ such that for $A \in T_G$, the columns $v_1,\ldots, v_n$ of $A$ can be extended to a vector-relation configuration on $G$ that is unique up to gauge at internal vertices.  In particular, each internal vector is determined up to scale.
\end{enumerate}
\end{theorem}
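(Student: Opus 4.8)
The plan is to establish both parts by connecting configurations to the boundary measurement map via the dictionary alluded to in the excerpt (Proposition \ref{prop:plabicSigns}), but with careful attention to existence and uniqueness rather than merely translating notation. For part (1), I would argue directly that the span of the boundary vectors, together with the relations $R_b$, forces the Pl\"ucker coordinates $\Delta_J(A)$ to vanish for $J \notin \mathcal{M}$. The key observation is that a non-trivial relation $R_b$ at a black vertex $b$ expresses a linear dependence among the vectors $\{v_w : w \in N(b)\}$; by repeatedly eliminating interior vectors using these relations, one obtains linear dependences among the boundary vectors indexed by the ``reachable'' boundary set, and the combinatorics of which subsets can remain independent is governed precisely by the existence of almost perfect matchings avoiding them. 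Concretely, I expect that for $J \notin \mathcal{M}$ there is no almost perfect matching of $G$ using the boundary vertices not in $J$, and a Hall-type / matching-theoretic argument (essentially a deficiency version of K\H{o}nig's theorem, or a direct counting argument on the bipartite incidence structure) produces a subset of black vertices whose relations involve only white vertices in a set too small to support independence of $\{v_j : j \in J\}$; hence $\Delta_J(A) = 0$.

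For part (2), the strategy is to reverse the elimination procedure: given generic $A \in \Pi_{\mathcal{M}}$, I would build the configuration by propagating from the boundary inward. Since $G$ is reduced, its faces carry well-defined face weights and the graph has no ``redundant'' structure, so I expect to use reducedness to guarantee that each interior white vertex is reached in a consistent way. The cleanest route is to invoke invertibility of the boundary measurement map on the level of edge weights modulo gauge (Postnikov's theorem, which holds for reduced plabic graphs and which the introduction explicitly cites as the conceptual content here): generic $A \in \Pi_{\mathcal{M}}$ pulls back to an essentially unique gauge class of (complex) edge weights on $G$. Then Proposition \ref{prop:moves} and the edge-weight/vector-relation dictionary convert that edge weighting into a vector-relation configuration realizing the prescribed boundary data, and the genericity of $A$ ensures the required nonvanishing and full-rank conditions hold. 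Uniqueness up to internal gauge then follows from uniqueness of the edge weights up to gauge, after checking that internal gauge transformations on the configuration correspond exactly to vertex gauges on the edge weights at internal vertices.

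The main obstacle, I expect, is part (2), and specifically making the ``propagate from the boundary'' argument rigorous without circularity. One has to show that the relations $R_b$ can be solved, vertex by vertex, to produce nonzero interior vectors, and that no consistency condition obstructs this beyond $\Delta_J = 0$ for $J \notin \mathcal{M}$ — i.e. that the positroid variety equations are not only necessary (part (1)) but sufficient. Here genericity of $A$ is essential: on special loci of $\Pi_{\mathcal{M}}$ the reconstruction can fail (an interior vector forced to be zero, or the matrix $K$ dropping rank), so the statement is only that reconstruction works on a dense open subset. I anticipate the argument requires a careful induction on the structure of a reduced plabic graph — peeling off boundary edges, lollipops, or using a BCFW-type recursive decomposition — with the inductive hypothesis handling a smaller reduced graph and a correspondingly smaller positroid, and the genericity hypothesis being propagated through each step. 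Verifying that reducedness is exactly the hypothesis needed to close this induction (and is not merely convenient) is the delicate point; the role of reducedness is presumably to rule out the multiple-edge and degree-two pathologies that would otherwise break unique reconstruction.
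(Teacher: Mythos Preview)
Your Part (1) sketch is essentially right, and the paper carries it out in a cleaner form: rather than a Hall-type elimination, it observes (Lemma \ref{lem:rowspace} and Corollary \ref{cor:KToA}) that $\Delta_J(A) = \pm \Delta_{W\setminus J}(K)$, and the latter determinant expands as a sum over almost perfect matchings avoiding $J$, which is empty when $J \notin \mathcal{M}$.

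For Part (2), your route is genuinely different from the paper's, and there is a real gap in your uniqueness argument. You propose to invoke invertibility of the boundary measurement map to recover edge weights, then translate back to a configuration. This does give existence and uniqueness \emph{among configurations with all $K_{bw}\neq 0$}, but you never rule out a second configuration over $A$ with some coefficient $K_{bw}=0$. Such a configuration yields no edge weighting, so Postnikov's theorem says nothing about it. Closing this gap is exactly the content of Part (3) of the paper's sharper Theorem \ref{thm:main}, and it requires a separate argument: the paper constructs a specific acyclic perfect orientation on $G$ (orienting each edge in the direction of its smaller-numbered zigzag), shows the resulting extremal matching forces $K_{bw}\neq 0$ along its edges whenever the corresponding Grassmann-necklace Pl\"ucker coordinate is nonzero (Corollary \ref{cor:orientation}), and then cycles the base point to cover every edge. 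Your BCFW-style induction might eventually reproduce this, but as stated it does not.

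The paper's own construction for Part (2) is also quite different in spirit from either of your suggestions: it defines the reconstruction map $\Psi$ directly by setting $L_w = \bigcap_{j\in S_w} H_j$, where $S_w$ is the zigzag label of $w$ and $H_j$ is the span of $\{v_i : i \in I_j\setminus\{j\}\}$. The Muller--Speyer twist is used to show each $L_w$ is a line, and a local argument at each black vertex (Proposition \ref{prop:circuit}) shows the neighboring vectors form a circuit. Uniqueness then comes from the acyclic-orientation argument above, which propagates the containment $v_w \in H_j$ along oriented paths (Proposition \ref{prop:hyperplane}). This is more explicit than appealing to Postnikov's invertibility as a black box---it identifies each internal line geometrically in terms of boundary data---at the cost of importing the Muller--Speyer machinery.
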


The definition of a vector-relation configuration on a plabic graph is given in Definition~\ref{def:vr-plabic}. We review the definition of reducedness for plabic graphs in Section \ref{sec:background}, which contains background on various aspects of positroid theory.  Also, note that we mostly assume boundary vertices in plabic graphs have degree $1$, but in certain examples such as the following it is convenient to allow larger degree.  Our main results can be generalized to this situation, but it makes some definitions and arguments more cumbersome.

\begin{example} \label{ex:gr36}
Consider the plabic graph $G$ in Figure \ref{fig:gr36}.  The associated positroid variety is the full Grassmannian $Gr_{3,6}$.  As such, Theorem \ref{thm:plabic} asserts that the boundary vectors $v_1,\ldots v_6 \in \mathbb{C}^3$ of a configuration can be chosen generically and the last vector $u$ is determined by them up to scale.

Indeed suppose $v_1,\ldots v_6 \in \mathbb{C}^3$ are given and consider the possibilities for the internal vector $u$.  The lower black vertex forces $u,v_1,v_2$ to be dependent while the top black vertex forces $u,v_4,v_5$ to be dependent.  If the $v_i$ are generic then $u$ must lie on the line of intersection of the planes $\langle v_1,v_2 \rangle$ and $\langle v_4, v_5 \rangle$.  Hence $u$ is determined up to scale.  The other two black vertices have degree $4$.  It is always possible to find a linear relation among $4$ vectors in $\mathbb{C}^3$, so there are no added conditions imposed on $u$.
\end{example}

\begin{figure}
\centering
\includegraphics[height=2.5in]{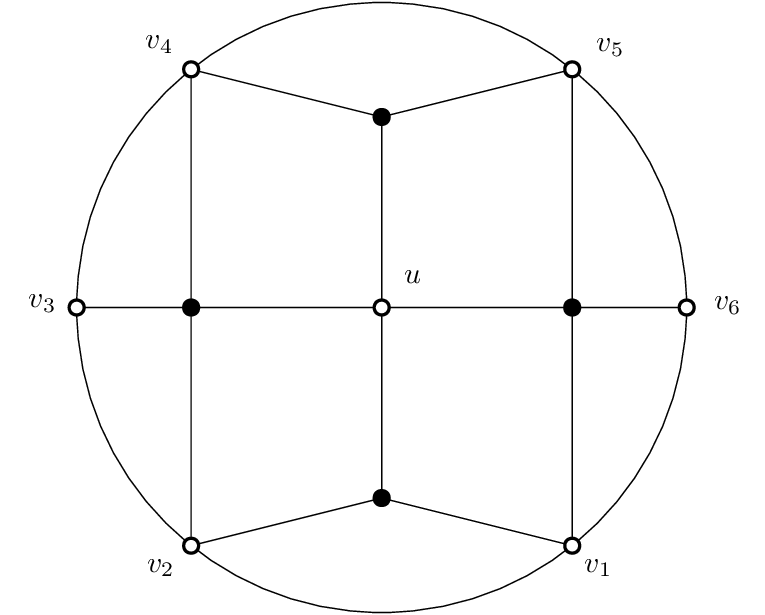}
\caption{A plabic graph corresponding to the open cell in $Gr(3,6)$}
\label{fig:gr36}
\end{figure}

\subsection{Relation to previous work} Our model of vector-relation configurations has substantial precedent in the literature.  In fact, a main selling point of our specific formulation is that it is versatile enough to tie into previously studied ideas in a variety of areas.  We outline some of the relevant previous work here for the interested reader's convenience.

In the plabic graph setting, Lam's \emph{relation space} \cite[Section 14]{LamCDM} is in a sense dual to our model.  
Let $G$ be a plabic graph and suppose we have vectors $v_w \in \mathbb{C}^k$ at white vertices satisfying relations
\begin{displaymath}
\sum_{w} K_{bw} v_w = 0
\end{displaymath}
indexed by black vertices.  Our approach is to consider the boundary vectors $[v_1 \cdots v_n]$ as making up a point in $Gr_{k,n}$.  The relation space is the dual point of $Gr_{n-k,n}$, that is, the kernel of $[v_1 \cdots v_n]$.  More directly, one takes in the $n$-dimensional space of linear combinations of $v_1,\ldots, v_n$ the subspace consisting of valid relations.  Note that the coefficients $K_{bw}$ alone determine the relation space, so the $v_w$ are replaced with formal variables.  In light of this connection, our Proposition \ref{prop:plabicSigns} is equivalent to \cite[Theorem 14.6]{LamCDM} except that we give explicit rules for the signs.

Another geometric model on plabic graphs is provided by Postnikov \cite{Pos18}.  He associates a point of a small Grassmannian $Gr_{1,3}$ or $Gr_{2,3}$ to each vertex.  His setup has the advantage that there is a natural duality between the black and white vertices.  We should also note that both \cite[Section 14]{LamCDM} and \cite{Pos18} are attempts to put on more mathematical footing the on-shell diagrams of physics \cite{Nima}.

It should be no surprise to experts that vector-relation configurations on plabic graphs have a close connection to the boundary measurement map, see Section \ref{sec:measurement}.  Taking this connection as given, Theorem \ref{thm:plabic} can be derived from corresponding properties of the boundary measurement map, the most difficult of which were proven by Muller and Speyer \cite{MulSpe}.  We take a different path, proving Theorem \ref{thm:plabic} directly to highlight some of the strengths of our model.  For instance, the analog for us of the inverse of the boundary measurement map is a novel reconstruction map which has a very pleasant geometric description.  This all said, we do make extensive use of a number of combinatorial and geometric results that are proven in the earlier sections of \cite{MulSpe}.

In the case of the dimer model on the torus, Kenyon and Okounkov \cite{KO} associate a section of a certain line bundle to each white vertex of a bipartite graph.  It is easy to see that said sections satisfy linear relations in such a way as to give a configuration (in an infinite dimensional space).  Fock \cite{F15} shows how to recover this data from the line bundle.  He constructs on each vertex of one color (black with his conventions) a one dimensional space defined by a certain intersection of spaces living on zigzags.  Our reconstruction map for plabic graphs as defined by \eqref{eq:reconstruct} is entirely analogous.  

As already mentioned, Gekhtman et al. \cite{GSTV12, GSTV16} were the first to describe the pentagram map (and generalizations) in terms of dynamics on networks.  It is easy in retrospect to see all of the ideas of vector-relation configurations in these papers.  For instance, the authors identify the edge weights as coefficients of linear relations among lifts of the points of the polygon.  Such coefficients also appear as the $a,b$-coordinates of Ovsienko, Schwartz, and Tabachnikov \cite{OST1}.  Similarly, in the study of $Q$-nets \cite{BS} an important role is played by the relation among the four coplanar points living at the vertices of each primitive square.

Finally, we note that there are many other geometric models on planar bipartite graphs compatible with the dimer model on the torus, for instance $T$-graphs \cite{KenShe}, Miquel dynamics on circle patterns \cite{A18,KLRR}, and Clifford dynamics \cite{KonSch}.  The interplay between the various models is considered in \cite{AGR}.  That paper also includes descriptions of both $Q$-nets and discrete Darboux maps in terms of cluster dynamics which differ from those in the present paper.

\subsection{Structure of the paper} The remainder of this paper is organized as follows.  We begin in Section \ref{sec:transform} by reviewing the dynamics of local transformations and providing the main definitions for vector-relation configurations.  Section \ref{sec:config} covers the basic properties of our vector-relation model as well as a slight modification with the ambient vector space replaced by its projectivization.  In Section \ref{sec:examples} we illustrate how to incorporate several previously studied systems into our framework.  In Section \ref{sec:resist} we identify what sorts of vector-relation configurations arise from resistor and Ising networks.  We tackle the plabic graph case in Section \ref{sec:plabic}, building the general theory and proving Theorem \ref{thm:plabic}.  We relate our model with the boundary measurement map in Section \ref{sec:measurement}.  Finally, Section \ref{sec:structure} examines the geometry of the space of configurations on a plabic graph.

\medskip

\textbf{Acknowledgments.} We thank Lie Fu, Rick Kenyon, and Kelli Talaska for many helpful conversations.  We thank the anonymous referee for extensive comments and suggestions that led to several improvements to this paper.

\section{Background and main definitions} \label{sec:transform}

We first recall the classical setting of weighted bipartite  planar graphs and their transformations, before introducing our geometric model of vector-relation configurations on bipartite planar graphs and the corresponding transformations on such configurations.

Let $G$ be a planar bipartite graph with nonzero edge weights.  A \emph{gauge transformation} at a given vertex multiplies the weights of all edges incident to that vertex by a common scalar.  A \emph{local transformation} modifies a small portion of $G$ in the manner indicated in one of the pictures in Figure \ref{fig:moves}.  There are two types of local transformations:
\begin{itemize}
\item The top of Figure \ref{fig:moves} depicts \emph{urban renewal}.  The new edge weights are
\begin{equation} \label{eq:urban}
a' = \frac{a}{ac+bd}, \quad b' = \frac{b}{ac+bd}, \quad c' = \frac{c}{ac+bd}, \quad d' = \frac{d}{ac+bd}.
\end{equation}
This transformation is only defined if $ac+bd \neq 0$.
\item The bottom of Figure \ref{fig:moves} depicts \emph{degree two vertex addition}.  A vertex is split into two vertices of the same color connected by a new degree two vertex of the opposite color.  The move depends on a choice of a partition of the neighbors of the original vertex into two cyclically consecutive blocks of size $k$ and $l$.  The figure depicts addition of a degree two black vertex, but the same move is allowed with all colors reversed producing a degree two white vertex instead.
\end{itemize}

\begin{figure}
\centering
\includegraphics[height=3in]{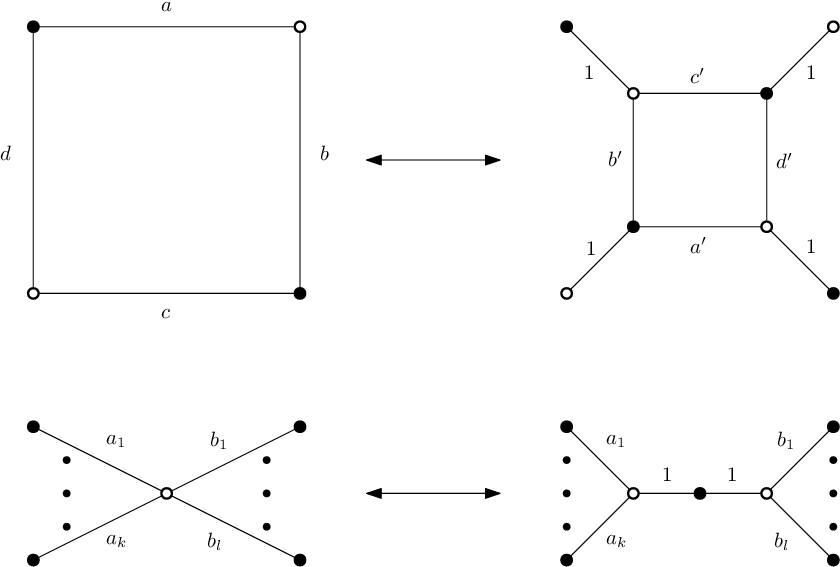}
\caption{Local transformations applied to a graph.}
\label{fig:moves}
\end{figure}

It is common to consider the space of edge-weightings of $G$ modulo gauge equivalence, and it is easy to see that local transformations are well-defined on this level.  Both types of local moves can be performed in either direction, where going from right to left requires first applying gauges to make the indicated edge weights equal to $1$.  The second local transformation when applied from right to left is called \emph{degree two vertex removal}.

The last bit of background we need are the basics of Kasteleyn theory, see \cite{Ken09} for a more detailed exposition.  For a planar bipartite graph $G=(B \cup W,E)$, call a map $\epsilon : E \to \{\pm 1\}$ a set of \emph{Kasteleyn signs} if
\begin{itemize}
\item each $4k$-gon face of $G$ has an odd number of $-1$'s on its boundary, while 
\item each $(4k+2)$-gon face of $G$ has an even number of $-1$'s on its boundary.
\end{itemize}
If $G$ is finite then a set of such signs always exists, and any two choices of Kasteleyn signs differ by a gauge transformation.  If a general edge-weighting of $G$ is given, the associated \emph{Kasteleyn matrix} $K$ is defined as follows.  It has rows and columns indexed by $B$ and $W$ respectively. If $b\in B$ and $w\in W$, then $K_{bw}$ equals the sum over all edges between them of the weights of these edges multiplied by the Kasteleyn signs of the edges. In particular $K_{bw} = 0$ if there is no edge between $b$ and $w$. The Kasteleyn matrix $K$ of a planar bipartite $G$ plays an important role in the study of the dimer model on $G$: the partition function is given by $|\det K|$ and the correlations are computed using minors of $K^{-1}$ \cite{Ken09}.

We now introduce a geometric model associated to every bipartite planar graph.   

\begin{definition} \label{def:vr}
Let $G$ be a planar bipartite graph with vertex set $B \cup W$.  For $b\in B$ let $N(b) \subseteq W$ denote its set of neighbors.  Fix a vector space $V$. A \emph{vector-relation configuration} on $G$ consists of choices of 
\begin{itemize}
\item a nonzero vector $v_w \in V$ for each $w \in W$ and
\item a non-trivial linear relation $R_b$ among the vectors $\{v_w : w \in N(b)\}$ for each $b \in B$.
\end{itemize}
In particular, each set $\{v_w : w \in N(b)\}$ must be linearly dependent.
\end{definition}

By a linear relation we mean a formal linear combination of vectors that evaluates to zero on $\{v_w : w \in N(b)\}$.  For technical reasons it is best to allow $G$ to have multiple edges in which case the $N(b)$ are understood to be multisets and a given vector can appear multiple times in a given relation.  We often ignore this possibility, either implicitly or by assuming $G$ to be reduced (a certain condition that implies it lacks multiple edges). A useful way to deal with multiple edges is to use the classical reduction rule of collapsing parallel edges and adding their weights.

\begin{definition} \label{def:gauge}
Consider a vector-relation configuration on a graph $G$ as above and suppose $\lambda \neq 0$.  The \emph{gauge transformation} by $\lambda$ at a black vertex $b \in B$ scales the relation $R_b$ by $\lambda$ (and keeps all other vectors and relations the same).  The \emph{gauge transformation} by $\lambda$ at a white vertex $w \in W$ scales $v_w$ by $1/\lambda$ and scales the coefficient of $v_w$ by $\lambda$ in each relation in which it appears to compensate.  Two vector-relation configurations are called \emph{gauge equivalent} if they are related by a sequence of gauge transformations.
\end{definition}

We now wish to define dynamics with the same combinatorics as local transformations for weighted bipartite graphs, but operating on our vector and relation data rather than on edge weights.  If $R$ is a relation among vectors $\{u_1,\ldots, u_k, v_1,\ldots, v_l\}$ let $R|_{u_1\cdots u_k}$ denote the linear combination of $u_1,\ldots, u_k$ appearing in $R$.  This combination may be formal or not depending on context.  For instance, as formal linear combinations we have $R|_{u_1\cdots u_k} + R|_{v_1\cdots v_l} = R$ while as vectors we have $R|_{u_1\cdots u_k} + R|_{v_1\cdots v_l} = 0$ since $R$ evaluates to $0$.

First consider urban renewal, as pictured in Figure \ref{fig:urban}.  We need to define the vectors and relations at the new vertices.  Let $u_1 = R_1|_{v_1v_2}$ and $u_2 = R_2|_{v_1v_2}$.  Note that $u_1$ and $u_2$ are both given as linear combinations of $v_1,v_2$, so if the coefficient matrix is nonsingular we can formally solve for each $v_i$ in terms of $u_1$ and $u_2$.  Moving the $u_j$ terms to the other side we get a linear relation $S_i$ among $v_i$, $u_1$, and $u_2$ for $i=1,2$.  In short, the $u_i$, $v_i$ and $S_i$ are consistent with being part of a vector-relation configuration on the new graph.  As a final step $R_1$ is modified to reflect that a linear combination of $v_1,v_2$ has been replaced by $1u_1$ and similarly with $R_2$.  Explicitly, these new relations are
\begin{displaymath}
R_i' = (R_i - R_i|_{v_1,v_2}) + 1u_i.
\end{displaymath}
Note that if the matrix mentioned above is singular then urban renewal is not defined on the configuration.

\begin{figure}
\centering
\includegraphics[height=1.5in]{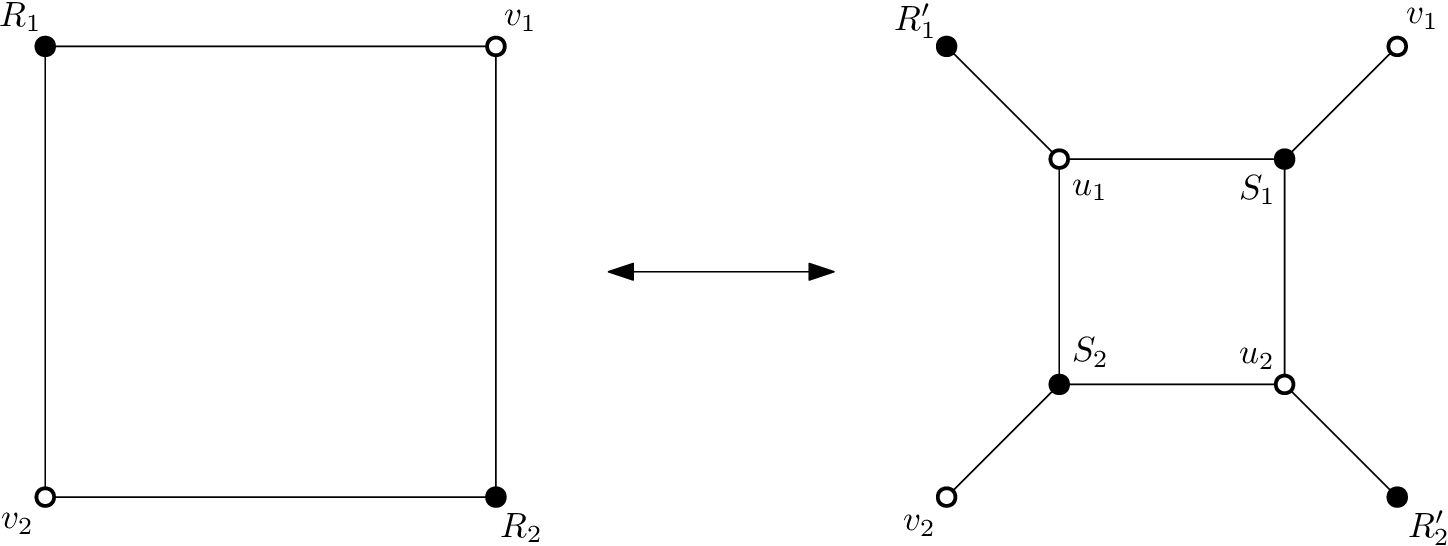}
\caption{The vector-relation version of urban renewal.}
\label{fig:urban}
\end{figure}

Next consider degree two vertex addition, as pictured in Figure \ref{fig:degree2}.  First suppose we are adding a degree two black vertex.  It is natural to set the new vectors equal to each other and to the old vector, i.e. $v = w = u$.  We then get a relation $T = 1v - 1w$.  The nearby relations do not need to be modified at all.  On the other hand, suppose we are adding a degree two white vertex.  Choose as the new vector $w = R|_{u_1\cdots u_k} = -R|_{v_1 \cdots v_l}$.  We get the relation $S$ by starting with $R$ and replacing $R|_{v_1\cdots v_l}$ with $-1w$.  Similarly $T$ is obtained from $R$ by replacing $R|_{u_1\cdots u_k}$ with $1w$.

\begin{figure}
\centering
\includegraphics[height=2in]{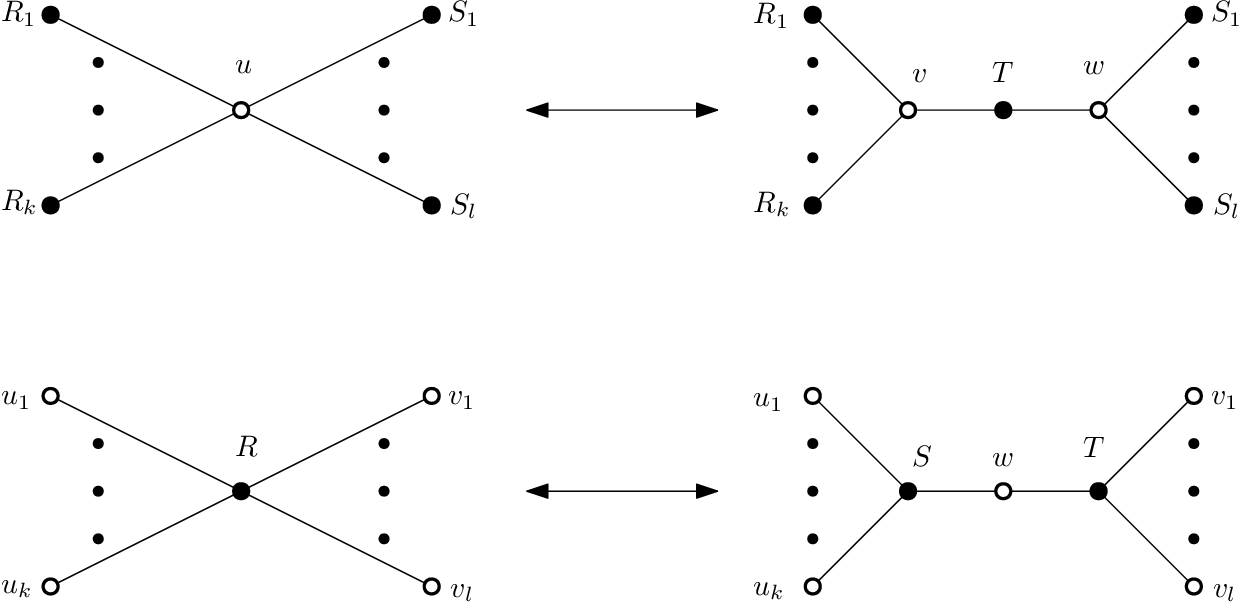}
\caption{The vector-relation version of degree two vertex addition.}
\label{fig:degree2}
\end{figure}

As with classical local transformations, these operations preserve gauge equivalence and can be run in both directions.  Thus, gauge equivalence classes of vector-relation configurations will serve as our main object of study.

\section{Vector-relation configurations} \label{sec:config}
In this section we develop the theory of vector-relation configurations on general planar bipartite graphs as in Definitions \ref{def:vr} and \ref{def:gauge}.  To that end, let $G = (B \cup W, E)$ be a planar bipartite graph.  We will denote a vector-relation configuration on $G$ by $(\v,\Rvec)$ (or sometimes just $\v$ for short) where $\v = (v_w)_{w \in W}$ and $\Rvec = (R_b)_{b \in B}$.

\subsection{Constructing the edge weights} \label{subsec:edge}
For $b \in B$ and $w \in W$, let $K_{bw}$ denote the coefficient of $v_w$ in $R_b$, understood to be $0$ if $b,w$ are not adjacent in $G$.  Performing local moves sometimes requires $K_{bw} \neq 0$ for certain $\overline{bw} \in E$, so we add that assumption when needed.  If $G$ is finite then we can view $K$ as a $|B|$-by-$|W|$ matrix.  Gauge transformations correspond to multiplying rows and/or columns of $K$ by nonzero scalars.  

The matrix $K$ plays the part of the Kasteleyn matrix (see Section \ref{sec:transform}) in the dimer model. Here the signs are already built into the entries of the matrix, and we need to remove them to obtain the weights. Fix a choice of Kasteleyn signs $\epsilon_{bw} = \pm 1$  for $\overline{bw} \in E$.  Let $\wt(e) = \epsilon_{bw}K_{bw}$ for each edge $e = \overline{bw}$ of $G$.  The $\wt(e)$ play the part of the edge weights in the classical story of local transformations of the dimer model.  As previously mentioned, in the planar case any two choices of Kasteleyn signs are gauge equivalent so the gauge class of the result depends only on the gauge class of $(\v, \Rvec)$.

\begin{remark}
The data of a gauge class of non-zero edge weights is equivalent to what Goncharov and Kenyon refer to as a trivialized line bundle with connection on $G$ \cite{GK}.
\end{remark}

\begin{proposition} \label{prop:moves}
Let $(\v, \Rvec)$ be a vector-relation configuration on $G$.  Apply a local transformation to obtain a new configuration $(\v', \Rvec')$ on $G'$.  Then the weight functions associated to these two configurations are related by a classical local transformation of the dimer model.
\end{proposition}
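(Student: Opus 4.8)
The plan is to verify the claim separately for the two types of local transformations, urban renewal and degree two vertex (addition/removal), since a general local transformation is one of these. In each case I would compute the edge weights produced by the vector-relation data both before and after the move, using $\wt(e) = \epsilon_{bw} K_{bw}$, and check that the two agree up to the classical formulas \eqref{eq:urban} (for urban renewal) and up to gauge (for degree two vertex addition, where the classical move simply introduces a degree two vertex with weights that can be gauged to $1$). Throughout I would work modulo gauge, which is legitimate since both the vector-relation moves and the classical moves are well-defined on gauge classes; this lets me normalize freely, e.g.\ to assume certain coefficients equal $1$.

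For \textbf{urban renewal}, I would set up the local picture as in Figure \ref{fig:urban}: the square has white vertices $v_1, v_2$ and two black vertices carrying relations $R_1, R_2$, with $v_1, v_2$ appearing with coefficients forming a $2\times 2$ matrix, and the outer edges carrying the remaining coefficients $a, b, c, d$ (up to Kasteleyn signs). After normalizing by gauges at the four outer vertices and the two inner black vertices so that $a=b=c=d=1$ classically corresponds to the natural normalization $u_1 = R_1|_{v_1 v_2}$, $u_2 = R_2|_{v_1 v_2}$, I would solve the $2\times 2$ system to express $v_1, v_2$ in terms of $u_1, u_2$; the determinant of that system is exactly $ac+bd$ up to sign (this is where the $ac+bd$ denominator enters), and reading off the coefficients in the relations $S_1, S_2$ gives precisely the primed weights $a', b', c', d'$ of \eqref{eq:urban} after attaching the correct $\epsilon$'s. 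For \textbf{degree two vertex addition}, the black case is immediate: $v = w = u$ and $T = 1v - 1w$ produces two edges of weight $\pm 1$, matching the classical degree two vertex whose weights gauge to $1$. The white case is the content: with $w = R|_{u_1\cdots u_k} = -R|_{v_1\cdots v_l}$, the relation $S$ has the $u_i$-block of $R$ together with coefficient $-1$ on $w$, and $T$ has coefficient $1$ on $w$; I would check that the edge weights on the $u_i$-side and $v_j$-side of the new graph coincide with the old edge weights (the new degree two vertex contributing weights $\pm 1$), again after fixing Kasteleyn signs consistently on the enlarged graph.

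The \textbf{main obstacle} is bookkeeping of the Kasteleyn signs $\epsilon_{bw}$ across the move: the faces of $G$ and $G'$ differ near the modified region (urban renewal replaces a $2k$-gon region structure, and vertex addition turns a $2m$-gon face into a $(2m{+}2)$-gon or splits faces), so one must check that a valid sign choice on $G$ can be extended/adjusted to a valid sign choice on $G'$ in a way compatible with the computed $K$-coefficients, and that the resulting $\wt$'s match the classical formulas \emph{on the nose} rather than merely up to an unexplained sign. Since any two Kasteleyn sign choices are gauge equivalent in the planar case, it suffices to exhibit one good choice on each side and track it through the local picture; I expect this to reduce to a finite check on the small number of faces and edges touched by the move. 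The remaining steps — solving the $2\times 2$ system, reading off coefficients — are routine linear algebra.
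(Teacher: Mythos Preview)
Your approach is essentially the paper's: treat urban renewal by inverting the $2\times 2$ coefficient matrix and then fixing Kasteleyn signs, and treat degree two addition by observing the two new edges carry coefficients $\pm 1$ (opposite signs, hence valid Kasteleyn signs on the new bigon-adjacent faces, with unsigned weight $1$).

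One correction worth flagging: in the classical urban renewal picture the weights $a,b,c,d$ live on the \emph{inner} square edges---i.e.\ they \emph{are} the entries of your $2\times 2$ matrix (the coefficients of $v_1,v_2$ in $R_1,R_2$)---while the four outer legs already carry weight $1$. So your step ``gauge at the four outer vertices \ldots\ so that $a=b=c=d=1$'' is backwards and in fact inconsistent with your next sentence (``the determinant of that system is exactly $ac+bd$''). The paper simply names the inner coefficients $\tilde a,\tilde b,\tilde c,\tilde d$, inverts to get new coefficients with denominator $\tilde a\tilde c-\tilde b\tilde d$, and then chooses a specific Kasteleyn sign pattern on the square (three negatives on one side, one on the other) to convert that into \eqref{eq:urban}. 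Once you fix the labeling, your sketch and the paper's proof coincide.
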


\begin{proof}
First suppose the operation is urban renewal, and adopt the notation of Figure \ref{fig:urban}.  Suppose the initial relations are
\begin{align*}
R_1 &= \tilde{a}v_1 + \tilde{d}v_2 + \ldots \\
R_2 &= \tilde{b}v_1 + \tilde{c}v_2 + \ldots
\end{align*}
so by definition $u_1 = \tilde{a}v_1 + \tilde{d}v_2$ and $u_2 = \tilde{b}v_1 + \tilde{c}v_2$.  We assume when doing urban renewal that the $v_1,v_2$ can be recovered from $u_1,u_2$, i.e. that $\tilde{a}\tilde{c} - \tilde{b}\tilde{d} \neq 0$.  In this case,
\begin{align*}
v_1 &= \frac{\tilde{c}u_1 -\tilde{d}u_2}{\tilde{a}\tilde{c} - \tilde{b}\tilde{d}} \\
v_2 &= \frac{-\tilde{b}u_1 + \tilde{a}u_2}{\tilde{a}\tilde{c} - \tilde{b}\tilde{d}} 
\end{align*}
The new relations are
\begin{align*}
S_1 &= v_1 + \tilde{c}'u_1 + \tilde{d}'u_2 \\
S_2 &= v_2 + \tilde{b}'u_1 + \tilde{a}'u_2 
\end{align*}
where
\begin{equation} \label{eq:urbanSigned}
\tilde{a}' = \frac{-\tilde{a}}{\tilde{a}\tilde{c} - \tilde{b}\tilde{d}}, \quad 
\tilde{b}' = \frac{\tilde{b}}{\tilde{a}\tilde{c} - \tilde{b}\tilde{d}}, \quad
\tilde{c}' = \frac{-\tilde{c}}{\tilde{a}\tilde{c} - \tilde{b}\tilde{d}}, \quad
\tilde{d}' = \frac{\tilde{d}}{\tilde{a}\tilde{c} - \tilde{b}\tilde{d}}.
\end{equation}

Let $a,b,c,d, a',b',c',d'$ be the edge weights obtained by multiplying the associated coefficients by Kasteleyn signs.  The notation has been chosen so that these weights correspond to edges in the manner indicated in Figure \ref{fig:moves}.  On the left is a quadrilateral face which should have an odd number of $-1$'s.  Applying gauge we can assume specifically $a = -\tilde{a}$, $b=-\tilde{b}$, $c=-\tilde{c}$, and $d=\tilde{d}$.  It is consistent on the right to have the edge labeled $b'$ be negative, all other pictured edges positive, and all edges outside the picture keeping their original signs.  So we put $a' = \tilde{a}'$, $b'=-\tilde{b}'$, $c'=\tilde{c}'$, and $d'=\tilde{d}'$.  Applying this substitution to \eqref{eq:urbanSigned} verifies that the edge weights evolve according to \eqref{eq:urban}, as desired.

Now suppose the transformation is degree $2$ vertex addition.  There is a natural injection from edges of $G$ to edges of $G'$, and the definitions are such that coefficients living on these edges are all unchanged.  Fixing Kasteleyn signs on $G$, we can get valid signs on $G'$ by keeping the signs of all old edges and giving the two new edges opposite signs from each other.  If the new vertex is black (see top of Figure \ref{fig:degree2}), the opposite signs are reflected in the new relation $T = 1v-1w$.  If instead it is white (bottom of Figure \ref{fig:degree2}) we have that the new vector $w$ appears with coefficient $-1$ in $S$ and $+1$ in $T$, so again the signs are opposite.  In both cases, the unsigned weights of both new edges equal $1$ in agreement with the bottom of Figure \ref{fig:moves}.
\end{proof}

Note that the map from vector-relation configurations to edge weightings on $G$ has only been defined in the one direction.  Before moving on to applications, we briefly discuss the reverse problem.  Suppose a planar bipartite graph $G = (B \cup W, E)$ is given with edge weights.  Applying Kasteleyn signs we obtain formal relations.  One approach to getting the vectors is to start with $|W|$ independent vectors and quotient the ambient space by these relations.  The resulting configuration is the most general with these edge weights in the sense that any other will be a projection of it.  In particular, assuming highest possible dimension the configuration is unique up to linear isomorphism.  We explore this construction in the plabic graph case in Section \ref{sec:plabic}.

A more difficult matter is the existence of a configuration for given edge weights.  A fundamental family of examples comes from taking $G$ to be balanced (same number of white and black vertices) on a torus.  In this case, the construction from the previous paragraph applied to generic edge weights would produce a trivial configuration with all vectors equals to $0$.  A partial remedy would be to allow twisted configurations in the spirit of twisted polygons in the theory of the pentagram map, which is the approach developed in \cite{AGeR}. 

\subsection{The face weights} \label{sec:face}
For a non-zero edge weighting on $G$, the basic gauge invariant functions are the \emph{monodromies} around closed cycles.  The monodromy of a cycle is the product of edge weights along the cycle taken alternately to the power $1$ and $-1$.  We can pull these quantities back to get gauge invariant functions of vector-relation configurations.  

We focus on the case of the monodromy around a single face $F$ of $G$.  Suppose $F$ is a $2m$-gon and that the vertices on its boundary in clockwise order are $w_1,b_1,\ldots, w_m,b_m$.  The \emph{face weight} of the face $F$ of a vector-relation configuration is
\begin{equation} \label{eq:face}
Y_F = (-1)^{m-1}\frac{K_{b_1w_1}K_{b_2w_2}\cdots K_{b_mw_m}}{K_{b_1w_2}K_{b_2w_3}\cdots K_{b_mw_1}}.
\end{equation}
The sign accounts for the product of Kasteleyn signs around the face.  In other words, we have arranged it so that this face weight equals the one defined in terms of edge weights in the corresponding weighted graph.

\begin{proposition} \label{prop:face}
Under an urban renewal move, the face weights of a vector-relation configuration evolve as in Figure \ref{fig:face}.  The face weights are unchanged by degree $2$ vertex addition/removal.
\end{proposition}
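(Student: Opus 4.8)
The plan is to reduce everything to the edge-weight picture already established. By Proposition \ref{prop:moves}, applying a local transformation to a vector-relation configuration $(\v,\Rvec)$ and then taking unsigned weights $\wt(e)=\epsilon_{bw}K_{bw}$ produces the same result as first taking weights and then applying the classical local transformation \eqref{eq:urban} (or classical degree $2$ addition). So it suffices to check that the face weight $Y_F$ of formula \eqref{eq:face}, when expressed in terms of the $\wt(e)=\epsilon_{bw}K_{bw}$, is exactly the dimer-model face weight (the alternating product of edge weights around $F$), and then invoke the known evolution of dimer face weights under urban renewal and under degree $2$ vertex addition/removal. The sign $(-1)^{m-1}$ in \eqref{eq:face} is precisely the compensation for the product of Kasteleyn signs around a $2m$-gon: by the Kasteleyn condition, a $2m$-gon face has an odd (resp.\ even) number of $-1$'s according to whether $2m\equiv 0$ or $2\pmod 4$, i.e.\ whether $m$ is even or odd, so $\prod_{\text{edges of }F}\epsilon = (-1)^{m}$ counted around the cycle; dividing numerator by denominator in \eqref{eq:face} the $\epsilon$'s from the $m$ numerator edges and $m$ denominator edges together contribute $(-1)^{m}$, which $(-1)^{m-1}$ cancels up to the overall sign $-1$ that one checks matches the convention in Figure \ref{fig:face}. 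I would state this identification as the first step.

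Second, I would record the behavior of classical dimer face weights under the two moves. For urban renewal this is standard from \eqref{eq:urban}: writing the four central edges as $a,b,c,d$ and letting $p,q,r,s$ be the face weights of the four quadrilaterals sharing an edge with the central square and $t$ the face weight of the central square, one computes directly that $t\mapsto 1/t$ and each neighbor $x\mapsto x(1+t)$ or $x(1+1/t)^{-1}$ depending on its position — exactly the mutation-type formulas that Figure \ref{fig:face} must be displaying. I would not grind through this; I would cite it as the well-known $Y$-system / cluster $X$-variable transformation attached to urban renewal (e.g.\ from \cite{GK}), and simply note that substituting $\wt(e)=\epsilon_{bw}K_{bw}$ into both sides gives the claimed evolution of the $Y_F$'s because the $\epsilon$-signs are invariant data built into both the left- and right-hand weightings (we fixed compatible Kasteleyn signs on $G'$ in the proof of Proposition \ref{prop:moves}). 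For degree $2$ vertex addition: the two new edges both have unsigned weight $1$, and the new degree-$2$ vertex together with its two edges either lies inside a face (merging two faces, but the monodromy picks up the new edges with exponents $+1$ and $-1$, contributing $1$) or splits a face into a $2$-gon and a larger face; in every case the alternating edge-products defining the affected face weights are multiplied only by ratios of the new unit weights, hence unchanged, and the unaffected faces are literally untouched. Thus $Y_F$ is preserved.

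The order of steps is therefore: (i) show $Y_F$ equals the classical dimer face weight computed from $\wt(e)=\epsilon_{bw}K_{bw}$, carefully tracking the Kasteleyn sign so that the $(-1)^{m-1}$ is accounted for; (ii) quote the classical transformation law of dimer face weights under urban renewal and verify it matches Figure \ref{fig:face}; (iii) quote/verify invariance of classical dimer face weights under degree $2$ vertex addition/removal, using that the new edge weights are $1$; (iv) combine with Proposition \ref{prop:moves} to transport these statements back to vector-relation configurations. The main obstacle I anticipate is purely bookkeeping: making the sign conventions line up — ensuring that when we pass from $G$ to $G'$ the choice of Kasteleyn signs used in the proof of Proposition \ref{prop:moves} is the same one implicitly used to define $Y_{F'}$ via \eqref{eq:face} on $G'$, and that the resulting overall sign agrees with whatever sign appears in Figure \ref{fig:face}. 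Since any two Kasteleyn sign choices on a planar graph are gauge equivalent and face weights are gauge invariant, this is not a real difficulty, but it is where the proof needs to be written with care; the algebra of the rational transformations themselves is routine and can be imported wholesale from the dimer literature.
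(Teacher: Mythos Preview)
Your approach is essentially identical to the paper's: the paper's proof is a one-liner deferring to Proposition \ref{prop:moves} and the standard dimer face-weight evolution (citing \cite[Theorem 4.7]{GK}), which is exactly your steps (i)--(iv). One small slip: the Kasteleyn condition gives $\prod_{\text{edges of }F}\epsilon=(-1)^{m-1}$, not $(-1)^m$ (a $4k$-gon has $m=2k$ even and an odd number of $-1$'s; a $(4k{+}2)$-gon has $m$ odd and an even number of $-1$'s), so $Y_F$ equals the classical face weight on the nose with no leftover sign to absorb.
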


\begin{proof}
The formulas follow from the case of classical local transformations, for which they are standard, see e.g. \cite[Theorem 4.7]{GK}.
\end{proof}

\begin{figure}
\centering
\includegraphics[height=1.5in]{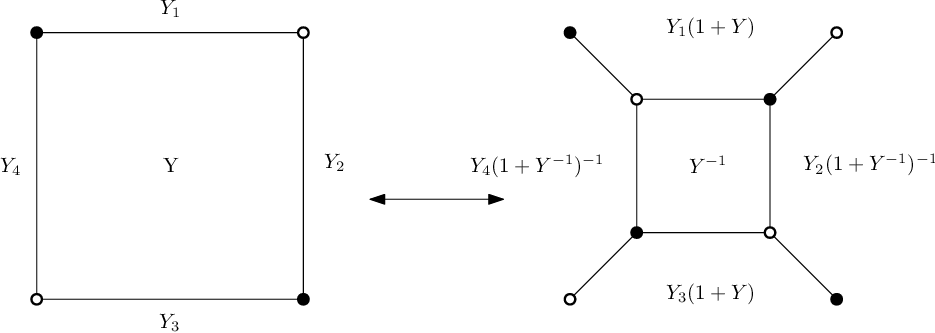}
\caption{The evolution equation for face weights}
\label{fig:face}
\end{figure}

To simplify some formulas, we introduce a vector associated to each black vertex $b$ of a given face $F$.  Suppose $w$ and $w'$ are the neighbors of $b$ around $F$.  Given a vector relation configuration we define $v(F,b) = R_b|_{ww'} = K_{bw}v_w + K_{bw'}v_{w'}$.  The $v_w$ and $v(F,b)$ for $w$ and $b$ around $F$ contain the data needed to calculate $Y_F$.  Moreover, if $F$ is a quadrilateral and $b,b'$ its black vertices, then the two new vectors arising from urban renewal at $F$ are $v(F,b)$ and $v(F,b')$.  

\subsection{Projective dynamics}
By placing an additional assumption on our configurations, we can obtain an elegant model for the gauge classes in terms of projective geometry.  Recall a set of vectors is called a \emph{circuit} if it is linearly dependent but each of its proper subsets is linearly independent.  Say that a vector-relation configuration is a \emph{circuit configuration} if each set $\{v_w : w \in N(b)\}$ is a circuit for $b \in B$.  For each $w \in W$, let $P_w$ equal the span of $v_w$, considered as a point in the projective space $\mathbb{P}(V)$.

\begin{proposition}
The gauge class of a circuit configuration is uniquely determined by the configurations of points $P_w \in \mathbb{P}(V)$ for $w \in W$.
\end{proposition}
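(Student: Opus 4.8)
The plan is to show two things: first, that the points $P_w$ are gauge invariants (so they are well-defined given only the gauge class), and second, that they determine the gauge class. The first is immediate: a gauge transformation at a black vertex does not touch any vector, and a gauge transformation at a white vertex $w$ only rescales $v_w$, hence fixes $P_w \in \mathbb{P}(V)$. So the map from gauge classes of circuit configurations to tuples $(P_w)_{w \in W}$ is well-defined, and it remains to prove injectivity.

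For injectivity, suppose $(\v, \Rvec)$ and $(\v', \Rvec')$ are two circuit configurations on $G$ with $P_w = P'_w$ for every $w \in W$. Then for each $w$ there is a nonzero scalar $\mu_w$ with $v'_w = \mu_w v_w$; applying the gauge transformation by $\mu_w$ at each white vertex $w$ to $(\v,\Rvec)$, we may assume without loss of generality that $v'_w = v_w$ for all $w$. Now the two configurations have exactly the same vectors, and the only remaining freedom is in the relations. Fix a black vertex $b$ with neighbors $N(b) = \{w : \overline{bw} \in E\}$. Both $R_b$ and $R'_b$ are non-trivial linear relations among $\{v_w : w \in N(b)\}$. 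Because the configuration is a circuit configuration, this set of vectors is a circuit, so the space of linear relations among them is one-dimensional: any two non-trivial relations among a circuit are scalar multiples of one another. Hence $R'_b = \lambda_b R_b$ for some $\lambda_b \neq 0$, and applying the gauge transformation by $\lambda_b$ at $b$ makes $R'_b = R_b$. Doing this at every $b \in B$ shows $(\v',\Rvec')$ is gauge equivalent to $(\v,\Rvec)$, as desired.

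The one point that deserves care — and the only real content beyond bookkeeping — is the claim that the relation space at a circuit is one-dimensional, i.e. that the circuit condition is exactly what is needed. If $\{u_1,\dots,u_r\}$ is a circuit then it is dependent, so there is a relation $\sum c_i u_i = 0$, and every $c_i$ is nonzero (if some $c_i = 0$, the remaining $u_j$ would be dependent, contradicting minimality). If $\sum c'_i u_i = 0$ were a second, non-proportional relation, then subtracting a suitable multiple would kill one coefficient while remaining non-trivial, again contradicting that every proper subset is independent. This is a standard fact about circuits, so in the write-up I would state it in one line. Thus there is essentially no obstacle; the proposition is a direct consequence of Definition \ref{def:gauge} together with the defining property of circuits. (One could remark that the genericity hypothesis is essential: without the circuit assumption, a black vertex of degree $d$ with its vectors spanning a space of dimension $d-2$ or less has a relation space of dimension $\geq 2$, and then the relation genuinely carries information not recorded by the points $P_w$.)
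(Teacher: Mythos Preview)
Your proof is correct and follows essentially the same approach as the paper: gauge to make the vectors agree, then use the circuit condition to conclude that the two relations at each black vertex are proportional (the paper phrases this last step as ``if not one could find a linear combination of $R_b$ and $R_b'$ with a zero coefficient, violating the circuit condition''). Your version is more detailed, in particular you spell out the well-definedness direction and the standard one-dimensionality-of-the-relation-space argument, but the underlying idea is identical.
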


\begin{proof}
Suppose two circuit configurations both give rise to the points $P_w$.  Then the vectors agree up to scale, so we can gauge to get the vectors $v_w$ to agree exactly.  It remains to show that for each $b \in B$ the relations $R_b$ and $R'_b$ on $\{v_w : w \in N(b)\}$ of the two configurations agree up to scale.  If not one could find a linear combination of $R_b$ and $R_b'$ with a zero coefficient and a nonzero coefficient, violating the circuit condition around $b$.
\end{proof}

As usual, we take an affine chart to visualize $\mathbb{P}(V)$ as an affine space of dimension one less than $V$.  From this point of view, a circuit of size $d$ consists of $d$ points contained in a $d-2$ dimensional space with each proper subset in general position (e.g. $4$ points on a plane of which no $3$ are collinear).

We next describe how local transformations look on the level of the points $P_w$.  For $F$ a face of $G$ and $w_1,b,w_2$ three consecutive vertices on the boundary of $F$, define
\begin{equation} \label{eq:PFb}
P(F,b) = \langle P_{w_1}, P_{w_2} \rangle \cap \langle \{P_w : w \in N(b) \setminus \{w_1,w_2\}\} \rangle
\end{equation}
where $\langle \cdot \rangle$ denotes the affine span of a set of points.  If $b$ has degree $d$ then by the preceding discussion the right hand side is a transverse intersection inside a $d-2$ space of a line and a $d-3$ space.  So $P(F,w)$ is indeed a point.

\begin{proposition} \label{prop:evolution}
Suppose we have a circuit configuration on $G$ consisting of points $P_w$ and that $F$ is a quadrilateral face with vertices $w_1,w_2$ and $b_1,b_2$ each having degree at least $3$.  If $P(F,b_1) \neq P(F,b_2)$, then
\begin{itemize}
\item urban renewal of the configuration is defined at $F$,
\item the result of urban renewal is a circuit configuration, and
\item urban renewal at $F$ constructs the point $P(F,b_i)$ at the new white vertex closer to $b_i$ for $i=1,2$.
\end{itemize}
\end{proposition}

\begin{proof}
First, we show that $P(F,b_1)$ is the class of $v(F,b_1)$ in projective space (and similarly for $P(F,b_2)$ and $v(F,b_2)$).  Indeed, $v(F,b_1)$ is by definition the linear combination of $v_{w_1}$ and $v_{w_2}$ appearing in $R_{b_1}$.  Applying $R_{b_1}$, one can equivalently express $v(F,b_1)$ as a linear combination of $\{v_w : w \in N(b_1) \setminus \{w_1,w_2\}\}$.  We get that $v(F,b_1)$ is on the intersection of two subspaces in a way that exactly projectivizes to the formula \eqref{eq:PFb} (note the circuit condition implies that $v(F,b_1) \neq 0$).

Now, since the $v(F,b_i)$ projectivize to distinct points, they must be linearly independent.  These vectors play the role of $u_1,u_2$ in Figure \ref{fig:urban} and their being independent is equivalent to the non-degeneracy condition needed to perform urban renewal.  It also follows that the $P(F,b_i)$ are the projectivizations of the new vectors produced by urban renewal.  All that remains is to prove the second assertion.

The circuit condition in the original graph implies that all coefficients of all relations at black vertices are nonzero.  Moreover, we know $v_{w_1}$ and $v_{w_2}$ are independent, e.g. by the circuit condition at $b_1$ together with the fact that $b_1$ has degree at least $3$.  Now, urban renewal produces two new black vertices (the ones labeled $S_i$ in Figure \ref{fig:urban}), and modifies the neighborhood of two others (the ones labeled $R_i'$).  First consider a new black vertex, say the one adjacent to the vectors $v(F,b_1)$, $v(F,b_2)$, and $v_{w_1}$.  We have already argued that the first two are independent.  Recall that $v(F,b_1) = R_{b_1}|_{v_{w_1}v_{w_2}}$ and by the facts at the beginning of this paragraph is independent of $v_{w_1}$.  Similarly $v(F,b_2)$ and $v_{w_1}$ are linearly independent.  So the circuit condition holds at this vertex.

Lastly consider one of the black vertices with a modified neighborhood, say the one originally called $b_1$.  The set of vectors at neighboring vertices is the same after urban renewal as before except that $v_{w_1}$ and $v_{w_2}$ have been removed, and $v(F,b_1)$ has been added.  Were there a linear dependence among a proper subset of these vectors introduced, it would have to include $v(F,b_1)$.  However, $v(F,b_1)$ is a linear combination of $v_{w_1}$ and $v_{w_2}$, so this would imply a dependence in the original graph contradicting the circuit condition there.
\end{proof}

\begin{proposition}
Suppose we have a circuit configuration on $G$ consisting of points $P_w$.  Consider a degree $2$ vertex addition move from $G$ to $G'$.  If the added degree $2$ vertex $b$ is black then the point $P$ at the white vertex of $G$ that got split is placed at both neighbors of $b$ in $G'$.  If the added degree $2$ vertex $w$ is white, let $P_1,\ldots, P_k$ and $Q_1,\ldots, Q_l$ be the points at the neighbors of the black vertex of $G$ that got split, following the template of the bottom of Figure \ref{fig:degree2}.  Then the new point that gets placed at $w$ is
\begin{displaymath}
\langle P_1,\ldots, P_k \rangle \cap \langle Q_1,\ldots, Q_l \rangle.
\end{displaymath}
In both cases, we still have a circuit configuration on $G'$.
\end{proposition}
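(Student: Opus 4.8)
The plan is to reduce this statement to the definitions of the vector-relation version of degree $2$ vertex addition, exactly as Proposition \ref{prop:evolution} reduces to the definition of urban renewal. I would begin with the black case. By definition the move sets the two new vectors equal to the old vector, say $v = w = u$, so their projective classes all equal the class $P$ of $u$; this immediately gives that $P$ sits at both neighbors of the new black degree $2$ vertex. There is nothing further to check here beyond observing that the move is well-defined on circuit configurations, which follows since a degree $2$ black vertex imposes the circuit $\{u, u\}$ (a repeated vector) and the relation $T = 1v - 1w$ is the unique such relation up to scale.

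For the white case I would recall from the construction preceding Figure \ref{fig:degree2} that the new vector placed at $w$ is $R|_{u_1\cdots u_k} = -R|_{v_1\cdots v_l}$, where $R$ is the old relation at the black vertex that got split and $u_1,\ldots,u_k$, $v_1,\ldots,v_l$ are its neighbors partitioned into the two cyclically consecutive blocks. The first expression exhibits this vector as a linear combination of $v_{u_1},\ldots,v_{u_k}$, hence it lies in $\langle P_1,\ldots,P_k\rangle$ after projectivizing; the second expression (using that $R$ evaluates to $0$) exhibits it as a linear combination of $v_{v_1},\ldots,v_{v_l}$, hence it lies in $\langle Q_1,\ldots,Q_l\rangle$. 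So its class lies in the intersection. To conclude it \emph{is} that intersection, I would invoke the circuit hypothesis: around the original degree $k+l$ black vertex the vectors form a circuit, so the affine span $\langle P_1,\ldots,P_k\rangle$ is a transverse $(k-1)$-plane, $\langle Q_1,\ldots,Q_l\rangle$ a transverse $(l-1)$-plane, inside a $(k+l-2)$-space, and a dimension count shows they meet in a single point; any point of the intersection is therefore the whole intersection.

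The only genuine subtlety, and the step I expect to need the most care, is checking that the new vector is \emph{nonzero}, so that $P_w$ is actually defined as a point of $\mathbb{P}(V)$: if $R|_{u_1\cdots u_k} = 0$ then $R|_{v_1\cdots v_l} = 0$ as well, and $R$ would decompose as two relations, one supported on each block, contradicting that $\{v_w : w \in N(b)\}$ is a circuit (no proper subset is dependent). This is the same circuit argument used in the proof that relations are determined up to scale, so it fits the pattern already established. Once nonvanishing is in hand, the transversality and dimension count are immediate from the affine-chart description of circuits given just before Proposition \ref{prop:evolution}, and no computation beyond that is required.
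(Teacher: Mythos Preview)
Your proposal is correct and follows essentially the same approach as the paper's proof, which simply says the black case is immediate from the definitions and the white case follows the pattern of Proposition~\ref{prop:evolution}. You have filled in details the paper leaves implicit---the nonvanishing of $R|_{u_1\cdots u_k}$ via the circuit hypothesis, and the dimension count showing the intersection is a single point---both of which are handled in the paper only by reference to the affine-chart discussion preceding \eqref{eq:PFb}; your more explicit treatment is a reasonable expansion of what the paper sketches.
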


\begin{proof}
The black degree $2$ vertex addition case follows directly from the definitions.  The proof in the white case follows the same approach as the proof of Proposition \ref{prop:evolution}.
\end{proof}

The circuit condition is preserved by the removal of a degree $2$ black vertex, since the neighborhood of each remaining black vertex did not get changed. Note however that in general, the circuit condition is not preserved by the removal of a degree $2$ white vertex. This is the case for example if the two black vertices adjacent to the degree $2$ white vertex have degrees $d_1\leq \dim V+1$ and $d_2\leq \dim V+1$, with $d_1+d_2\geq \dim V +4$. Nevertheless, in all the examples we will consider in Section~\ref{sec:examples}, the circuit condition will be preserved even by removals of degree $2$ white vertices, provided we start with a generic configuration, see Remark~\ref{rem:generic}.

To sum up, for each planar bipartite graph $G$ we have a projective geometric dynamical system dictated by the corresponding dimer model.  The state of the system is given by a choice of a point in projective space at each white vertex so that the points neighboring each black vertex form a circuit.  The points (and the graph) evolve under local transformations, the most interesting of which is urban renewal as described by Proposition \ref{prop:evolution} and formula \eqref{eq:PFb}.

We will see that many systems, some in the pentagram map family some not, fit in this framework.  For each such system we get for free the set of face weights $Y_F$ and their corresponding evolution equations as in Proposition \ref{prop:face}.  These variables are easy to define in a projectively natural way.  Suppose points $P_{1},\ldots, P_{2k}$ in an affine chart are given with the triples $\{P_1,P_2,P_3\}$, $\{P_3,P_4,P_5\}$, \ldots, $\{P_{2k-1},P_{2k},P_1\}$ all collinear.  The \emph{multi-ratio} (called a \emph{cross ratio} for $k=2$ and a \emph{triple ratio} for $k=3$) of the points is
\begin{displaymath}
[P_1,\ldots, P_{2k}] = \frac{P_1-P_2}{P_2-P_3} \frac{P_3-P_4}{P_4-P_5} \cdots \frac{P_{2k-1}-P_{2k}}{P_{2k}-P_1}.
\end{displaymath}
Each individual fraction involves $3$ points on a line and is interpreted as a ratio of signed distances. It is well-known that this ratio is independent of the chart and invariant under projective transformations, see e.g. \cite[Theorem 9.11]{BS}.

\begin{proposition} \label{prop:faceweights}
Suppose we have a circuit configuration on $G$ consisting of points $P_w$.  Let $F$ be a face with boundary cycle $w_1, b_1, w_2, b_2, \ldots, w_m, b_m$ in clockwise order.  In terms of the points $P_w$, the face weight of $F$ equals
\begin{displaymath}
Y_F = (-1)^{m-1}[P_{w_1}, P(F,b_1), P_{w_2}, P(F,b_2), \ldots, P_{w_m}, P(F,b_m)]^{-1}.
\end{displaymath}

\end{proposition}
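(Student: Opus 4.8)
The plan is to reduce the statement to the previously established formula \eqref{eq:face} for $Y_F$ in terms of the coefficients $K_{b_iw_j}$, and then recognize the resulting ratio as the multi-ratio of the indicated points. First I would fix an affine chart so that each $P_w$ has a specified affine lift; the natural choice is to take the lift of $P_{w_j}$ to be exactly the vector $v_{w_j}$ from the configuration, after rescaling so that all the relevant affine coordinates are normalized (concretely, work in the chart where some fixed linear functional $\ell$ takes value $1$, and gauge each $v_{w_j}$ so that $\ell(v_{w_j}) = 1$). The key observation, already contained in the proof of Proposition \ref{prop:evolution}, is that the affine point $P(F,b_i)$ is the projective class of $v(F,b_i) = K_{b_iw_i}v_{w_i} + K_{b_iw_{i+1}}v_{w_{i+1}}$ (indices cyclic mod $m$). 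So in the chosen chart, the affine point $P(F,b_i)$ is the unique affine combination of $P_{w_i}$ and $P_{w_{i+1}}$ proportional to $K_{b_iw_i}v_{w_i} + K_{b_iw_{i+1}}v_{w_{i+1}}$, namely
\begin{displaymath}
P(F,b_i) = \frac{K_{b_iw_i}}{K_{b_iw_i}+K_{b_iw_{i+1}}}\,P_{w_i} + \frac{K_{b_iw_{i+1}}}{K_{b_iw_i}+K_{b_iw_{i+1}}}\,P_{w_{i+1}},
\end{displaymath}
since the coefficients sum to $1$ once we impose $\ell(v_{w_j})=1$.

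Next I would compute each of the three-point fractions entering the multi-ratio $[P_{w_1}, P(F,b_1), P_{w_2}, \ldots, P_{w_m}, P(F,b_m)]$. Each such fraction has the form $(P_{w_i} - P(F,b_i))/(P(F,b_i) - P_{w_{i+1}})$, and with $P(F,b_i)$ written as above this becomes, after a one-line simplification,
\begin{displaymath}
\frac{P_{w_i} - P(F,b_i)}{P(F,b_i) - P_{w_{i+1}}} = \frac{K_{b_iw_{i+1}}}{K_{b_iw_i}}\cdot\frac{P_{w_i}-P_{w_{i+1}}}{P_{w_i}-P_{w_{i+1}}} = \frac{K_{b_iw_{i+1}}}{K_{b_iw_i}},
\end{displaymath}
where both numerator and denominator are parallel vectors along the line $\langle P_{w_i}, P_{w_{i+1}}\rangle$, so the ratio of signed lengths is exactly the scalar ratio of the affine coefficients. (One must check that the line $\langle P_{w_i}, P_{w_{i+1}}\rangle$ is not contained in the hyperplane $\ell = 0$, which follows from $\ell(v_{w_i}) = \ell(v_{w_{i+1}}) = 1 \neq 0$; this also shows all the signed-distance ratios are well defined.) Taking the product over $i = 1, \ldots, m$ gives
\begin{displaymath}
[P_{w_1}, P(F,b_1), \ldots, P_{w_m}, P(F,b_m)] = \prod_{i=1}^m \frac{K_{b_iw_{i+1}}}{K_{b_iw_i}} = \frac{K_{b_1w_2}K_{b_2w_3}\cdots K_{b_mw_1}}{K_{b_1w_1}K_{b_2w_2}\cdots K_{b_mw_m}},
\end{displaymath}
which is exactly $(-1)^{m-1}Y_F^{-1}$ by \eqref{eq:face}. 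Rearranging yields the claimed identity.

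The main thing to be careful about — and the only genuine obstacle — is the bookkeeping of affine normalizations and the interpretation of "ratio of signed distances" for collinear points in a higher-dimensional affine space: one needs the multi-ratio to be independent of the chosen chart (which it is, being a product of ratios each homogeneous of degree $0$ under rescaling within a line) and one needs to make sure that the gauge freedom used to set $\ell(v_{w_j}) = 1$ does not change $Y_F$ (it does not, since $Y_F$ is gauge-invariant, as noted after \eqref{eq:face}) nor the points $P(F,b_i)$ (it does not, since these are defined projectively). A secondary point is the degenerate case where some $K_{b_iw_i} + K_{b_iw_{i+1}} = 0$ in the chosen chart; this is avoided by choosing $\ell$ generically, or handled by a limiting argument, but it is worth a sentence. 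Everything else is the routine computation sketched above, essentially identical in spirit to the proof of Proposition \ref{prop:evolution}.
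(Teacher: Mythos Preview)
Your proof is correct and follows essentially the same approach as the paper: gauge the $v_{w_j}$ to lie in an affine hyperplane $\{\ell=1\}$, express $P(F,b_i)$ as the corresponding affine combination of $P_{w_i}$ and $P_{w_{i+1}}$, compute each factor $(P_{w_i}-P(F,b_i))/(P(F,b_i)-P_{w_{i+1}}) = K_{b_iw_{i+1}}/K_{b_iw_i}$, and multiply to recover \eqref{eq:face}. The only cosmetic difference is that the paper additionally gauges at the black vertices so that $K_{b_iw_i}+K_{b_iw_{i+1}}=1$ (equivalently $v(F,b_i)$ itself lies on the hyperplane), whereas you keep the normalizing denominator explicit; the computation is otherwise identical.
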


\begin{proof}
By definition we have lifts $v_{w_i}$ of $P_{w_i}$ and $v(F,b_i)$ of $P(F,b_i)$ such that
\begin{displaymath}
v(F,b_i) = K_{b_iw_i}v_{w_i} + K_{b_iw_{i+1}}v_{w_{i+1}}.
\end{displaymath}
Applying gauge we can assume all $2m$ of these vectors lie in some affine hyperplane.  Then applying a linear functional with constant value $1$ on said hyperplane to the previous yields
\begin{displaymath}
1 = K_{b_iw_i} + K_{b_iw_{i+1}}.
\end{displaymath}
As such the above can be rewritten
\begin{displaymath}
K_{b_iw_{i+1}}(v(F,b_i) - v_{w_{i+1}}) = K_{b_iw_i}(v_{w_i}-v(F,b_i)).
\end{displaymath}
Viewing the $P$'s as points on the hyperplane this shows
\begin{displaymath}
\frac{P(F,b_i) - P_{w_{i+1}}}{P_{w_i}-P(F,b_i)} = \frac{K_{b_iw_i}}{K_{b_iw_{i+1}}}.
\end{displaymath}
Multiplying across all $i$ produces the reciprocal of the multi-ratio on the left and the defining expression \eqref{eq:face} for the face weights on the right.
\end{proof}

\section{Examples} \label{sec:examples}
In this section, we consider several projective geometric systems from the literature, and explain how they fit in our framework.  For each we identify the appropriate bipartite graph as well as the sequence of local transformations realizing the system.  In some cases we also explicitly work out the associated dynamics on the face weights.

\begin{remark}\label{rem:generic}
In order to work on the level of projective geometry, all configurations in this Section are assumed to be circuit configurations.  Moreover, each individual system is only defined for a subset of such configurations. Indeed, every urban renewal move requires a certain non-degeneracy condition, see Proposition \ref{prop:evolution}. Furthermore, for all these examples, the removal of degree $2$ white vertices will preserve the circuit condition only if one requires a genericity assumption on the starting configuration. One nice application of defining a system this way is one can obtain a large family of inputs for which all iterates are guaranteed to be defined, namely those with generic positive edge weights.
\end{remark}

\begin{remark}
The examples in this Section all take place on infinite bipartite graphs in the plane.  In some we assume the points of the configuration are biperiodic with respect to some lattice in the plane.  One can just as well impose as boundary conditions that the face weights be biperiodic, but not the points themselves.  This choice lines up with the dimer model on the torus, and one in principle can use \cite{GK} to prove a lot about such systems (Liouville integrability, spectral curve, combinatorial formulas for conserved quantities, ...). Such an approach has been implemented in \cite{AGeR} for some dynamics on spaces of polygons phrased in terms of vector-relation configurations. Another approach to integrability was proposed by Gekhtman, Shapiro, Tabachnikov, and Vainshtein \cite{GSTV16} for pentagram maps ; the connection with the approach of \cite{GK} was recently explained in \cite{Izosimov}.
For other examples below, the biperiodic face weights condition gives special cases that to our knowledge have not been rigorously studied.
\end{remark}

\subsection{The pentagram family}
\begin{example} \label{ex:laplace}
The Laplace-Darboux system \cite{D97} operates on a $2$-dimensional array of points in $\mathbb{P}^3$ for which the points of each primitive square are coplanar.  It is convenient to index the points as $P_{i,j}$ for $i,j \in \mathbb{Z}$ with $i+j$ even.  The centers of the squares are then $(i,j)$ with $i+j$ odd so the condition is
\begin{equation} \label{eq:laplaceCond}
P_{i,j-1}, P_{i-1,j}, P_{i+1,j}, P_{i,j+1} \textrm{ coplanar for $i+j$ odd}.
\end{equation}
The system produces a new array of points $Q_{i,j}$ for $i+j$ odd defined by
\begin{displaymath}
Q_{i,j} = \langle P_{i,j-1}, P_{i+1,j} \rangle \cap \langle P_{i-1,j}, P_{i,j+1}  \rangle.
\end{displaymath}

To state Laplace-Darboux dynamics in our language take the infinite square grid graph $G = (\mathbb{Z}^2, E)$, which is bipartite with white vertices being those $(i,j)$ with $i+j$ even.  Place the points $P_{i,j}$ above at the white vertices.  For each black vertex $(i,j)$ with $i+j$ odd, the circuit condition says that the $4$ neighboring points should be coplanar, which is precisely \eqref{eq:laplaceCond}.

To evolve the system, perform urban renewal at each face whose upper left corner is black.  Figure \ref{fig:laplace} shows a local picture.  Taking $F,b$ as in the picture, one of the new points is
\begin{displaymath}
P(F,b) = \langle P_{2,0}, P_{3,1} \rangle \cap \langle P_{1,1}, P_{2,2} \rangle = Q_{2,1}.
\end{displaymath}
Eliminating all degree $2$ vertices in the resulting picture recovers the square lattice except with the colors of vertices reversed.  The surviving points are precisely the $Q_{i,j}$.
\end{example}

\begin{figure}
\centering
\includegraphics[height=2in]{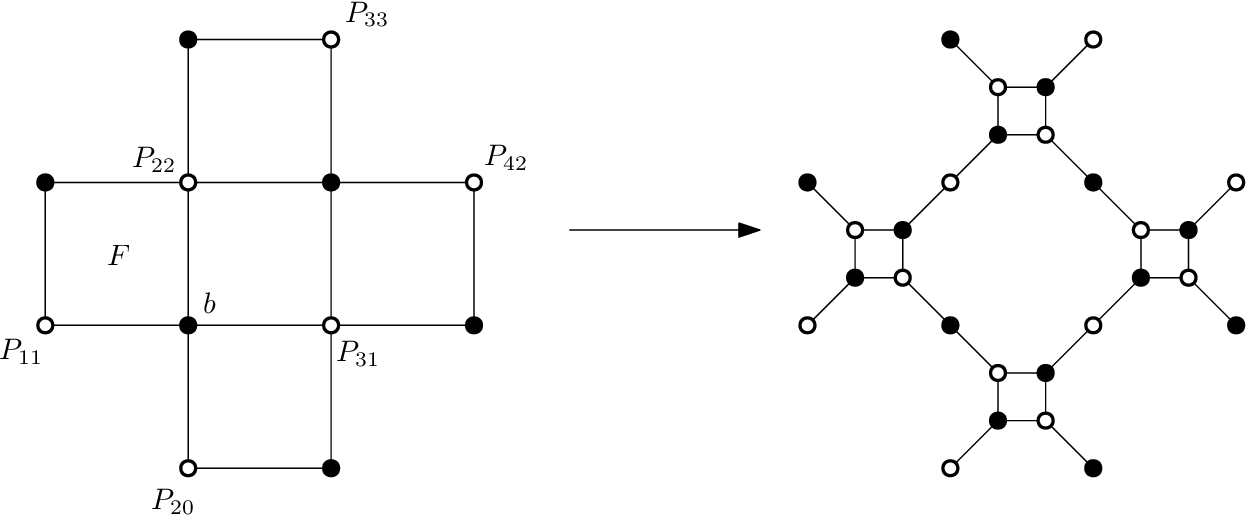}
\caption{The local transformations which, when followed by degree $2$ vertex removals, realize Laplace-Darboux dynamics.}
\label{fig:laplace}
\end{figure}

\begin{example}
The pentagram map takes as input a polygon in $\mathbb{P}^2$ with vertices $A_i$ for $i \in \mathbb{Z}$ and outputs the polygon with vertices 
\begin{displaymath}
B_i = \langle A_{i-1}, A_{i+1} \rangle \cap \langle A_{i}, A_{i+2} \rangle.
\end{displaymath}
This operation can be seen as a reduction of Laplace-Darboux dynamics.  Indeed, one can check that letting
\begin{align*}
P_{i,j} &= A_{(i+3j)/2}, \textrm{ $i+j$ even} \\
Q_{i,j} &= B_{(i+3j-1)/2}, \textrm{ $i+j$ odd}
\end{align*}
gives an input-output pair for Laplace-Darboux.  Note that $P_{i,j} = P_{i-3,j+1}$ and moreover if $A$ is a closed $n$-gon meaning $A_{i+n} = A_i$ then $P_{i,j} = P_{i+2n,j}$.

As the bipartite graph for Laplace-Darboux was the square grid on $\mathbb{Z}^2$, the correct choice for the pentagram map is the quotient of this graph by the lattice generated by $(-3,1)$ and $(2n,0)$.  This is a bipartite graph on a torus.  Point $A_i$ labels (the class of) the vertex $(2i,0)$.  The relations are of the form ``$A_{i-1}, A_i, A_{i+1}, A_{i+2}$ coplanar'' which explains why the whole configuration must be in a plane.  Finally, the local transformations take the same form as for Laplace-Darboux.

The face weights of a polygon are precisely the $y$-parameters as defined in \cite{G11}.  As an example, Figure \ref{fig:penta} gives a portion of the bipartite graph.  Applying Proposition \ref{prop:faceweights}, the variable at the face labeled $F$ is
\begin{displaymath}
Y_F = -[A_3, \langle A_3, A_4 \rangle \cap \langle A_1, A_2 \rangle, A_4, \langle A_3, A_4 \rangle \cap \langle A_5, A_6 \rangle]^{-1}.
\end{displaymath}

Although this algebraic formulation of the pentagram map was known \cite{G11}, there may be other insights to be gained from the vector-relation perspective.  For instance, if nearby vertices of a polygon come together it creates a singularity for the pentagram map dynamics.  Keeping track of the coefficients of the relation satisfied by the points as they come together would be one way to try to control the behavior through the singularity.
\end{example}

\begin{figure}
\centering
\includegraphics[height=2in]{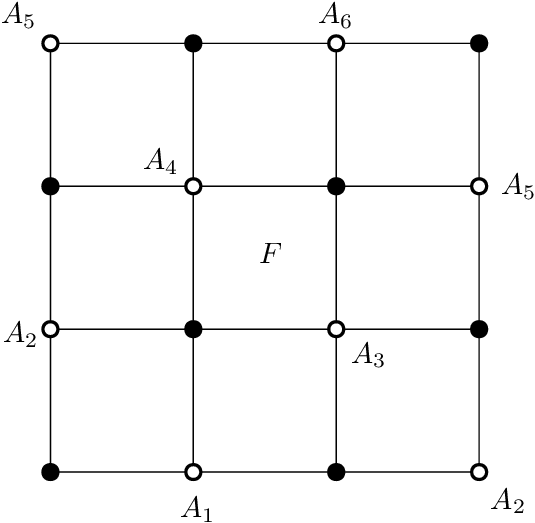}
\caption{A portion of the bipartite graph whose vector-relation dynamics coincide with the pentagram map.}
\label{fig:penta}
\end{figure}

\begin{example}
Different ways of putting the square grid graph on the torus produce different interesting systems.  The higher pentagram map of Gekhtman et al. \cite{GSTV12} is obtained by working in $\mathbb{RP}^d$ and identifying $(i,j)$ with $(i-d-1,j+d-1)$.  Indeed, the $(-i,i)$ form a set of representatives of the white vertices.  Placing a point $P_i$ at each $(-i,i)$, the neighbors of a given black vertex are labeled by $P_i, P_{i+1}, P_{i+d}, P_{i+d+1}$.  The condition that such four-tuples be coplanar is called the corrugated property and the sequence of moves from Example \ref{ex:laplace} produces points
\begin{displaymath}
\langle P_i, P_{i+d} \rangle \cap \langle P_{i+1}, P_{i+d+1} \rangle
\end{displaymath}
as in the higher pentagram map.
\end{example}

\begin{example}
The left of Figure \ref{fig:spiral} depicts one step of a certain pentagram spiral system \cite{Sch13}.  The input is a \emph{seed} consisting of five points $A_1,\ldots, A_5$ with $A_5$ lying on the line through $A_1$ and $A_4$.  The output is a new seed $A_2,\ldots, A_6$ with $A_6 = \langle A_1, A_3 \rangle \cap \langle A_2, A_5 \rangle$.  If iterated the result is a polygonal curve that spirals inwards indefinitely.

\begin{figure}
\centering
\includegraphics[height=2in]{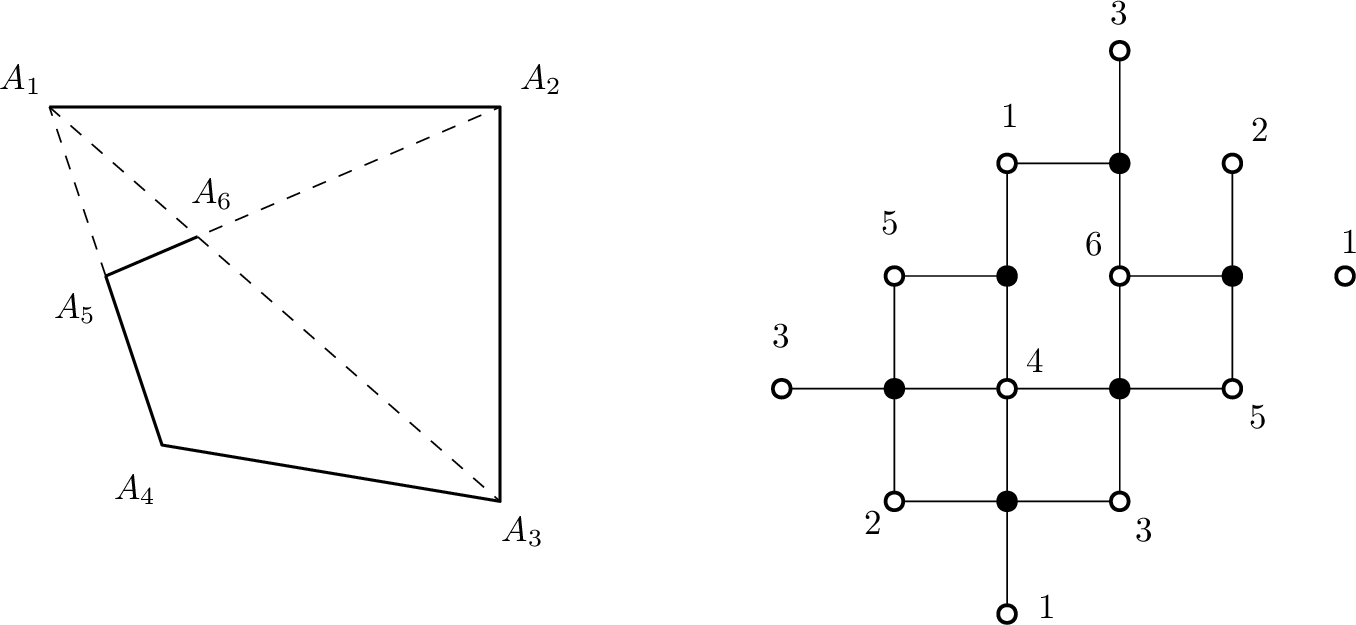}
\caption{One step of a system that produces a pentagram spiral (left) along with the associated bipartite graph (right).}
\label{fig:spiral}
\end{figure}

The right of Figure \ref{fig:spiral} shows a bipartite graph whose vector-relation dynamics captures this system.  As with the pentagram map on hexagons, the vertex set of the graph $G$ is $\mathbb{Z}^2$ modded out by the lattice generated by $(-3,1)$ and $(12,0)$.  However, $G$ does not include all of the edges from the square grid.  The figure shows exactly one copy of each edge and each black vertex, while the repeats among white vertices help to visualize how the picture repeats when lifted to $\mathbb{Z}^2$.

Place points $A_i$ for $i=1,\ldots, 6$ at the white vertices.  There are three degree $4$ black vertices which give conditions that $\{A_1,A_2,A_3,A_4\}$, $\{A_2,A_3,A_4,A_5\}$, and $\{A_3,A_4,A_5,A_6\}$ are coplanar.  As such, all six points are on a common plane.  There are also three degree $3$ black vertices implying that the triples $\{A_1,A_4,A_5\}$, $\{A_1,A_3,A_6\}$, and $\{A_2,A_5,A_6\}$ are collinear.  These match the defining conditions of the six points in the left picture.  In short, being a configuration on $G$ is equivalent to being a union of two consecutive seeds of the pentagram spiral.

We give a quick description of how to realize spiral dynamics.  There is a quadrilateral face of $G$ containing white vertices $1$ and $3$.  Urban renewal at this face followed by a degree $2$ vertex removal will produce a graph isomorphic to $G$.  The points $A_2,\ldots, A_6$ will remain and there will also be a new point $A_7 = \langle A_1, A_3 \rangle \cap \langle A_2, A_4 \rangle$, which is the next point on the spiral.  So the dynamics on the graph are equivalent to the spiral map, with the only discrepancy being that the former keeps track of six consecutive points at each time instead of five.

The graph $G$ is a special case of the dual graph to a Gale-Robinson quiver, see \cite{JMZ}.  It is likely that every sufficiently large such graph models some combinatorial type of pentagram spiral.
\end{example}

\begin{example}
The second and third authors \cite{GP16} defined a family of dynamical systems that iteratively build up certain maps from $\mathbb{Z}^2$ to a projective space termed $Y$-meshes.  Rather than give the full definition, we focus on a single illustrative example.

The \emph{rabbit map} acts on the space of triples $A,B,C$ of polygons in $\mathbb{P}^4$ satisfying the conditions
\begin{align*}
& A_{i-1}, B_{i+1}, C_i \textrm{ collinear} \\
& A_{i+1}, B_{i}, C_i \textrm{ collinear} \\
& A_{i-1}, B_{i-1}, B_{i+1}, C_{i+1} \textrm{ coplanar}
\end{align*}
for all $i \in \mathbb{Z}$.  The map takes $(A,B,C)$ to $(B,C,D)$ where
\begin{displaymath}
D_i = \langle A_{i-1}, B_{i+1} \rangle \cap \langle B_{i-1}, C_{i+1} \rangle
\end{displaymath}
for all $i$.  The vector-relation formulation of the rabbit map is given in Figure \ref{fig:rabbit}.  The black vertices correspond exactly to the conditions listed above.  Propagation is carried out by applying urban renewal for each $i$ at the square face containing both $A_{i-1}$ and $B_{i+1}$.

\begin{figure}
\centering
\includegraphics[height=2in]{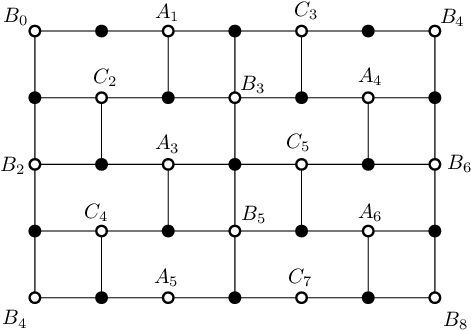}
\caption{The bipartite graph corresponding to the rabbit map.  The figure continues infinitely up and down, while the left and right sides are identified as per the labeling.}
\label{fig:rabbit}
\end{figure}

In general, a $Y$-mesh is a map $(i,j) \mapsto P_{i,j}$ from $\mathbb{Z}^2$ to a projective space such that each translate of a fixed $4$ element subset of $\mathbb{Z}^2$ maps to a quadruple of collinear points.  For instance, the rabbit map is invertible and a $Y$-mesh can be built from one of its orbits.  Begin with $P_{i,0} = A_i$, $P_{i,1} = B_i$, $P_{i,2}=C_i$ for all $i \in \mathbb{Z}$, and fill out the rest by performing the map in both directions, e.g. $P_{i,3} = D_i$.  In this example, each quadruple $P_{i-1,j}, P_{i+1,j+1}, P_{i,j+2}, P_{i,j+3}$ ends up being collinear.

We strongly suspect that every system from \cite{GP16} is a special case of vector-relation dynamics.  More precisely, we showed in the former that each such system is modeled algebraically by local transformations on a certain bipartite graph, and at least in examples the geometric dynamics can be seen to line up as well.

The vector-relation perspective represents a significant improvement in our understanding of $Y$-meshes.  As an example, in the original formulation only the cross ratio $y$-variables are easy to describe and the others require a messy case by case analysis \cite[Section 13]{GP16}.  Now we get a uniform description of all $y$-variables via Proposition \ref{prop:faceweights}.  For instance, the hexagon in Figure \ref{fig:rabbit} containing $A_3,C_2,B_3$ has weight
\begin{displaymath}
y_F = [A_3, B_2, C_2, A_1, B_3, \langle A_3, B_3 \rangle \cap \langle B_5, C_5 \rangle]^{-1}.
\end{displaymath}
A central question that is open in general is what minimum collections of points determine the $Y$-mesh and what relations they satisfy (see \cite[Section 8]{GP16} for many examples including the rabbit case).  There is hope that these questions have answers in terms of graph theoretic properties of $G$.  A result of this flavor in a different context is given in Proposition \ref{prop:matroid}.
\end{example}

\subsection{$Q$-nets} \label{sec:Qnet}

Discrete conjugate nets, or {\it {$Q$-nets}} were introduced by Doliwa-Santini \cite{DS}; we follow the exposition of Bobenko-Suris \cite{BS}.
We shall specifically be concerned with $3$-dimensional $Q$-nets, defined as follows. 

\begin{definition} \cite[Definition 2.1]{BS}
 A map $f: \mathbb Z^3 \rightarrow \mathbb R^3$ is a $3$-dimensional $Q$-net in $\mathbb R^3$ if for every $u \in \mathbb Z^3$ and for every pair of indices $i,j \in \{1,2,3\}$, points $f(u), f(u+e_i), f(u+e_j), f(u+e_i+e_j)$ are coplanar (where $e_1, e_2, e_3$ are the generators of $\mathbb Z^3$).
\end{definition}

While a $Q$-net is a static object, it is often convenient to think of it in a dynamical way as follows. For $u = (i,j,k)$ let $|u| = i+j+k$. A {\it {generation}} of vertices of a $Q$-net is the set of all $f(u)$ where $|u|=t$. Let us denote $f_t$ such $t$-th generation. Then knowing $f_t$ and $f_{t+1}$ one can construct the next generation $f_{t+2}$ as follows. Consider an elementary cube consisting of eight points $f(u+\epsilon_1 e_1+\epsilon_2 e_2 +\epsilon_3 e_3)$, where each $\epsilon_i$ is either $0$ or $1$. Assume $|u|=t-1$. Then using the six points that belong to $f_t, f_{t+1}$ one can construct three planes that have to contain $f(u+e_1+e_2+e_3) \in f_{t+2}$. Intersecting those planes we generically get the unique candidate for $f(u+e_1+e_2+e_3)$. 

The problem of parametrization of $Q$-nets, i.e. defining certain geometric quantities and giving formulas for how they evolve from generation to generation, is discussed in \cite{BS}.  The first such description goes back to the original work \cite{DS}. Our construction suggests a new way to parametrize $3$-dimensional $Q$-nets. Furthermore, since our parameters are cross-ratios of quadruples of points, it is natural to view it as parametrizing {\it {projective $Q$-nets}}, i.e. $Q$-nets considered up to projective equivalence. 

Consider three consecutive generations $f_t, f_{t+1}, f_{t+2}$ of a $Q$-net $f$. Their vertices and the edges that connect them can be conveniently visualized as a lozenge tiling dual to the Kagome lattice, see Figure \ref{fig:k1}. Vertices of each lozenge map into vertices of a face of one of the elementary cubes of a $Q$-net, and thus are coplanar. 
Thus, the geometry of the three generations of points is captured by the bipartite graph we get by placing a white vertex at each vertex of the lozenge tiling, and a black vertex at each face, see Figure \ref{fig:k1}.  The set of points of this configuration is sufficient initial data to determine the whole $Q$-net.  In fact, the rest of the $Q$-net is obtained via local transformations, which by an inductive argument boils down to the following.

\begin{figure}[h!]
    \begin{center}
    \input{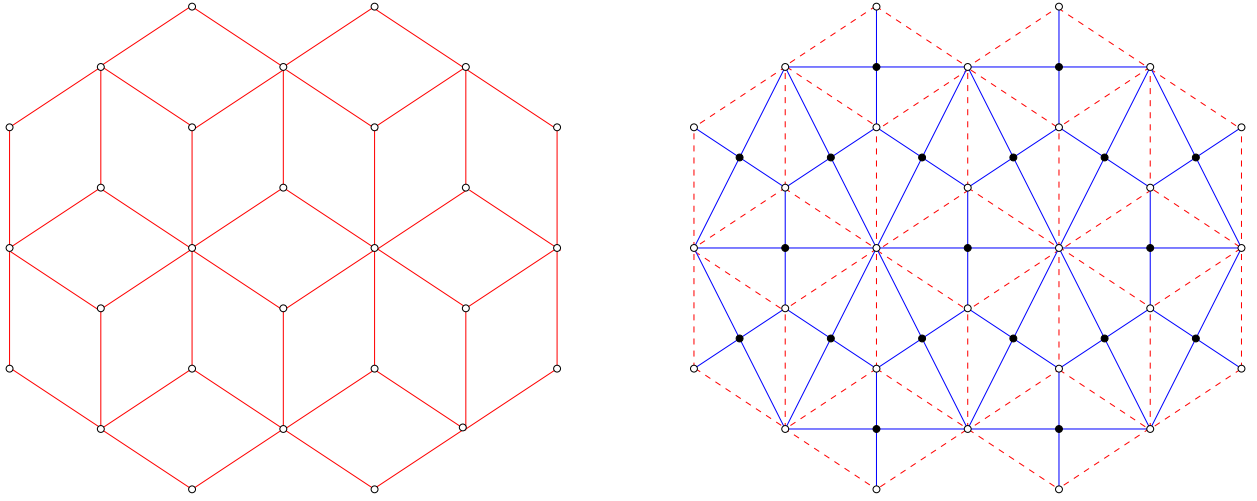}
    \end{center}
    \caption{Three generations of a $Q$-net and the associated bipartite graph}
    \label{fig:k1}
\end{figure}

\begin{proposition}
 The sequence of square moves shown in Figure \ref{fig:k2} realizes geometrically a step of time evolution of the $Q$-net transitioning from vertex $D$ to vertex $D'$ of one of the elementary cubes.
\end{proposition}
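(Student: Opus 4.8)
The plan is to realize the $Q$-net update map as a composition of urban renewal moves and degree-$2$ vertex removals on the Kagome-dual bipartite graph, and then to check that the point that urban renewal places at the new white vertex (as computed by Proposition~\ref{prop:evolution} and formula~\eqref{eq:PFb}) is exactly the vertex $D'$ demanded by the $Q$-net dynamics. Recall from the dynamical description of $Q$-nets that $D'$ is obtained from an elementary cube as the intersection of three planes, each plane spanned by four coplanar points from the two lower generations $f_t, f_{t+1}$. So the key is to set up the local picture carefully and then run the moves.

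First I would fix notation for the relevant portion of the lozenge tiling in Figure~\ref{fig:k1}, labeling the white vertices by the eight vertices of one elementary cube that are present (the six in generations $f_t, f_{t+1}$ together with the bottom vertex in $f_{t-1}$ and the top vertex $D' \in f_{t+2}$ once it appears), and labeling the black vertices by the faces of the cube, each carrying the coplanarity relation of its four corners as in the $Q$-net definition. Then I would carry out the sequence of square moves in Figure~\ref{fig:k2} one at a time. Each urban renewal is applied to a quadrilateral face of the current graph; by Proposition~\ref{prop:evolution} it creates, at the new white vertex nearer a black vertex $b$, the point
\begin{displaymath}
P(F,b) = \langle P_{w_1}, P_{w_2} \rangle \cap \langle \{P_w : w \in N(b)\setminus\{w_1,w_2\}\} \rangle,
\end{displaymath}
and the intermediate degree-$2$ white vertices introduced or removed leave all points unchanged (the black/white degree-$2$ moves just duplicate a point or take an intersection of two spans). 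I would track the point placed at each newly created white vertex through the sequence, showing that after the last move the surviving configuration is again the Kagome-dual graph shifted up by one generation, with the new white vertex carrying the point $\langle \cdots \rangle \cap \langle \cdots \rangle \cap \langle \cdots \rangle$ obtained by intersecting the three face-planes — i.e.\ exactly $D'$.

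The main obstacle I expect is bookkeeping: matching the combinatorial sequence of square moves in Figure~\ref{fig:k2} with the geometric cube picture, in particular verifying that the three intermediate spans produced along the way really are the three face-planes of the elementary cube through $D'$ and not some other triple, and that the degree-$2$ additions/removals are performed with the partition of neighbors that keeps the graph isomorphic to the Kagome-dual at the end. A secondary point to check is that the genericity hypotheses are met at each step (each black vertex stays a circuit, so that $P(F,b)$ is genuinely a transverse intersection and hence a well-defined point); this follows from the generic position of the cube's vertices, but it should be stated. Once the local picture is pinned down, the verification that iterating this local move over all cubes and all generations reconstructs the entire $Q$-net is the promised inductive argument and is routine given that the initial three generations are the stated sufficient initial data.
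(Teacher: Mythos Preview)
Your proposal is correct and follows essentially the same approach as the paper: verify each urban renewal in the sequence of Figure~\ref{fig:k2} using Proposition~\ref{prop:evolution}, tracking the intermediate points created at each step. The paper's proof is terser, simply listing the explicit intermediate points and spans (for instance $E = C'D' \cap AB'$, $F = C'D' \cap BA'$, $G = A'B \cap CD$, $H = AB' \cap CD$) rather than discussing the bookkeeping in the abstract, but the content is the same.
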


\begin{figure}[h!]
    \begin{center}
    \input{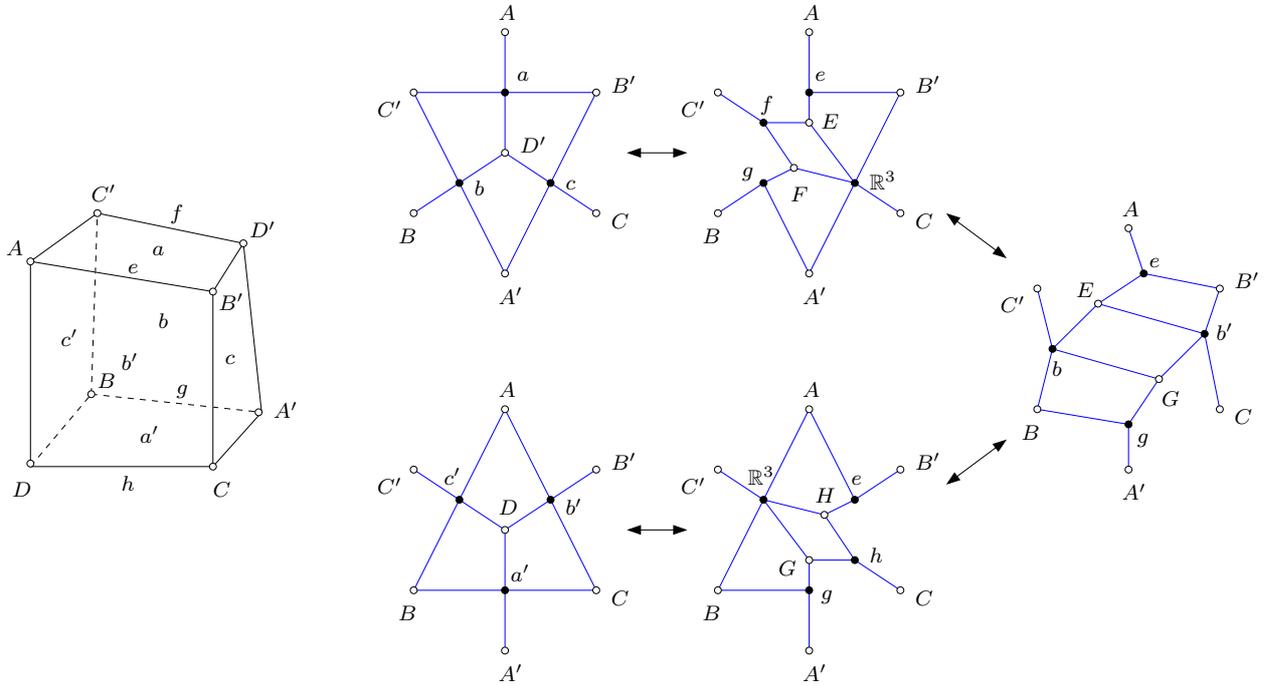}
    \end{center}
    \caption{The gentrification sequence of moves.  For convenience, each black vertex is labeled by the affine hull of the points at neighboring white vertices.}
    \label{fig:k2}
\end{figure}

\begin{proof}
 We verify the sequence of square moves using Proposition \ref{prop:evolution} on each step. For example, points $G$ and $H$ are formed by intersecting line $CD$ with affine spans of the rest of white points surrounding respectively $a'$ and $b'$, i.e. with lines $BA'$ and $AB'$. Here $E = C'D' \cap AB'$, $F = C'D' \cap BA'$, $G=A'B \cap CD$, $H=AB' \cap CD$, $a = \langle AB'D'C' \rangle$ (the affine hull of these $4$ points, which is a plane by the definition of $Q$-net), $b = \langle A'BC'D' \rangle$, $c = \langle B'CA'D' \rangle$, $e = AB'$, $f = C'D'$, $g = A'B$, $h = CD$, $a' = \langle A'BDC \rangle$, $b' = \langle AB'CD \rangle$, $c' = \langle ADBC' \rangle$.
\end{proof}

\begin{remark}
 This sequence of square moves has appeared in \cite[Figure 8]{KLRR}, without the current geometric interpretation, and under the name of {\it {star-triangle move}}. Here we introduce the name {\it {gentrification}} to emphasize its similar, but not coinciding nature with {\it {superurban renewal}} of \cite{KP}, see below. 
\end{remark}

Denote by $Q_{i,j,k}$ the vertex of the Q-net with coordinates $i,j,k$.  Let $Q_{i,j,k}^x$ be the edge connecting $Q_{i,j,k}$ with $Q_{i+1,j,k}$ (thinking of the first coordinate as the $x$-direction).  Define $Q_{i,j,k}^y$ and $Q_{i,j,k}^z$ similarly.  By the previous discussion, $3$ successive generations $f_t, f_{t+1}, f_{t+2}$ of the $Q$-net are the points of an associated circuit configuration.  Each face of the bipartite graph corresponds to an edge of the lozenge tiling, see the right of Figure \ref{fig:k1}.  Following our recipe from Proposition \ref{prop:faceweights} (we omit the details), we get formulas for the associated face weights.  They are
\begin{displaymath}
Y_{i,j,k}^x = -[Q_{i,j,k}, Q_{i,j,k}^x \cap Q_{i,j+1,k}^x, Q_{i+1,j,k}, Q_{i,j,k}^x \cap Q_{i,j,k+1}^x]^{-1}
\end{displaymath}
for $i+j+k=t$, 
\begin{displaymath}
\tilde{Y}_{i,j,k}^x = -[Q_{i,j,k}, Q_{i,j,k}^x \cap Q_{i,j,k-1}^x, Q_{i+1,j,k}, Q_{i,j,k}^x \cap Q_{i,j-1,k}^x]^{-1}
\end{displaymath}
for $i+j+k=t+1$, as well as two more copies of these formulas with the superscripts replaced by $y$ or $z$, and all subscripts cyclically shifted to the right by $1$ or $2$ spots respectively.  To sum up:

\begin{proposition}
The collection of face weights, also known as the $Y$-seed, corresponding to the above setup is
\begin{displaymath}
\{Y_{i,j,k}^* : i+j+k = t\} \cup \{\tilde{Y}_{i,j,k}^* : i+j+k = t+1\}.
\end{displaymath}
\end{proposition}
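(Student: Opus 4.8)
The plan is to reduce the statement to a single application of Proposition~\ref{prop:faceweights}, after which only a combinatorial bookkeeping in the coordinates $(i,j,k)$ remains. As recalled in the discussion preceding the statement, the faces of the bipartite graph of Figure~\ref{fig:k1} are in bijection with the edges of the lozenge tiling; if $F$ is the face associated to an edge $e$, then $F$ is a quadrilateral whose two white vertices $w_1,w_2$ are the endpoints of $e$ and whose two black vertices $b_1,b_2$, both of degree four, are the two faces of elementary cubes of the $Q$-net that are incident to $e$ within the slab of the three generations $f_t,f_{t+1},f_{t+2}$. So the only things to pin down, for each edge $e$, are the two auxiliary points $P(F,b_1),P(F,b_2)$ and the overall sign.

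For the sign, $F$ is a $2m$-gon with $m=2$, so Proposition~\ref{prop:faceweights} gives at once
\[
Y_F = -\,[\,P_{w_1},\,P(F,b_1),\,P_{w_2},\,P(F,b_2)\,]^{-1},
\]
a negated inverse cross ratio, matching the shape and the leading sign of the displayed formulas for $Y^{*}_{i,j,k}$ and $\tilde{Y}^{*}_{i,j,k}$. For the auxiliary points, each black vertex $b_i$ carries the coplanarity relation among its four neighboring points, which for a generic $Q$-net is a circuit; hence by~\eqref{eq:PFb} the point $P(F,b_i)$ is the intersection of the line $\langle P_{w_1},P_{w_2}\rangle$ carrying $e$ with the line carrying the edge of the planar quadrilateral $b_i$ opposite to $e$. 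In particular all four points lie on the line through $e$, so the multi-ratio above is an honest cross ratio with two vertices the endpoints of $e$ and two vertices intersections of $e$ with nearby parallel edges of the tiling, as announced informally before the statement.

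It then remains to identify those opposite edges. Take an $x$-edge $e=Q^x_{i,j,k}$. Of the four squares of $\mathbb{Z}^3$ containing $e$, their lowest vertices sit at levels $i+j+k$, $i+j+k$, $i+j+k-1$, $i+j+k-1$, and a square survives in the three-generation slab exactly when its lowest vertex is at level $t$. Thus if $i+j+k=t$ the two surviving squares are those on $Q_{i,j,k},Q_{i+1,j,k},Q_{i,j+1,k},Q_{i+1,j+1,k}$ and on $Q_{i,j,k},Q_{i+1,j,k},Q_{i,j,k+1},Q_{i+1,j,k+1}$, with edges opposite to $e$ equal to $Q^x_{i,j+1,k}$ and $Q^x_{i,j,k+1}$; reading the clockwise order of $b_1,b_2$ around $F$ off Figure~\ref{fig:k1} then reproduces exactly $Y^x_{i,j,k}$. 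If instead $i+j+k=t+1$ the two surviving squares are those on $Q_{i,j-1,k},Q_{i+1,j-1,k},Q_{i,j,k},Q_{i+1,j,k}$ and on $Q_{i,j,k-1},Q_{i+1,j,k-1},Q_{i,j,k},Q_{i+1,j,k}$, with opposite edges $Q^x_{i,j-1,k}$ and $Q^x_{i,j,k-1}$, giving $\tilde{Y}^x_{i,j,k}$. The $y$- and $z$-edge cases follow verbatim under the substitution $x\to y\to z\to x$ of superscripts together with the corresponding cyclic shift of subscripts. Since the edges of the tiling are precisely the $Q^{*}_{i,j,k}$ with $i+j+k\in\{t,t+1\}$, letting $F$ range over all faces recovers exactly the asserted $Y$-seed. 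I expect the main obstacle to be this last step: one must be careful about which cube faces survive in the slab and, above all, about the clockwise orientation of the two black vertices around each quadrilateral face, since a reversal there inverts the cross ratio and so would spoil the match with the definitions of $Y^x_{i,j,k}$ and $\tilde{Y}^x_{i,j,k}$; once the orientation conventions of Figure~\ref{fig:k1} are fixed, everything else is a mechanical substitution into Proposition~\ref{prop:faceweights}.
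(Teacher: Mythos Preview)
Your proposal is correct and follows essentially the same approach as the paper. In fact, the paper does not give a separate proof block for this proposition; the argument is contained in the paragraph immediately preceding it, which notes that each face is a quadrilateral with both black vertices of degree four, so Proposition~\ref{prop:faceweights} yields a negated cross ratio with the endpoints of the edge and the two intersections with nearby parallel edges, and then distinguishes the cases $i+j+k=t$ versus $i+j+k=t+1$. Your write-up is simply a more explicit and careful version of that same reasoning, including the slab bookkeeping and the orientation caveat.
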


 \begin{proposition}
  The variables $Y$ evolve according to the following formulas (and their cyclic shifts):
\begin{align*}
\tilde{Y}_{i,j+1,k+1}^x &= (Y_{i,j,k}^y)^{-1} \frac{1 + Y_{i,j,k}^x + Y_{i,j,k}^y Y_{i,j,k}^x}{1 + Y_{i,j,k}^z + Y_{i,j,k}^z Y_{i,j,k}^x} \\
Y_{i+1,j,k}^x &= \tilde{Y}_{i+1,j,k}^xY_{i+1,j-1,k}^xY_{i+1,j-1,k}^y\frac{1 + Y_{i+1,j,k-1}^z + Y_{i+1,j,k-1}^zY_{i+1,j,k-1}^x}{1+Y_{i+1,j-1,k}^x + Y_{i+1,j-1,k}^xY_{i+1,j-1,k}^y}
\end{align*}
for all $i+j+k=t$.
 \end{proposition}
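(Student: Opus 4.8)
The idea is to compose the face-weight transformation rule of Proposition \ref{prop:face} along the gentrification sequence. The preceding proposition realizes the $Q$-net step $D\to D'$ as a sequence of three urban renewal moves — the ``square moves'' of Figure \ref{fig:k2}, which successively create the auxiliary points $E,F$, then $G,H$, then $D'$ — together with the degree $2$ vertex additions and removals needed to keep the graph of the shape of Figure \ref{fig:k1}. By Proposition \ref{prop:face} the degree $2$ moves fix every face weight, while each urban renewal at a quadrilateral face $F$ sends $Y_F\mapsto Y_F^{-1}$, multiplies two opposite neighbouring face weights by $(1+Y_F)$ and the other two by $(1+Y_F^{-1})^{-1}$, all as prescribed by Figure \ref{fig:face}. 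So the whole argument is the bookkeeping of these three substitutions.

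First I would set up the dictionary between faces and variables. By the previous proposition the faces of the graph of Figure \ref{fig:k1} are in bijection with the edges $Q^{*}_{i,j,k}$, $i+j+k\in\{t,t+1\}$, of three consecutive generations $f_t,f_{t+1},f_{t+2}$, carrying the weights $Y^{*}_{i,j,k}$ and $\tilde Y^{*}_{i,j,k}$; the same dictionary applied to the output graph records the $Y$-seed of the next triple of generations. Around the elementary cube being modified there are exactly three faces of the input graph touched by the move, namely the three lozenges meeting at the departing vertex $D$ (equivalently, the three that will meet at $D'$); these are the three urban renewal sites, with input weights that I would identify with the three coordinate-direction variables $Y^{x}_{i,j,k},Y^{y}_{i,j,k},Y^{z}_{i,j,k}$. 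I would then label these faces and all of their neighbours so that, after the move, the surviving faces are exactly the variables named on the left-hand sides of the two formulas.

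Next I would run the three urban renewals in the order given by Figure \ref{fig:k2}, after each one updating the weight of every affected face and recording the cumulative product of inversions and $(1+\cdot)^{\pm1}$-type factors that each face collects. A face that is a renewal site is inverted; a face neighbouring one or more renewal sites is multiplied by the corresponding $(1+Y_F)$ or $(1+Y_F^{-1})^{-1}$ factors, where the weight $Y_F$ of a site reached on the second or third move is its already-updated value — this is exactly what produces the nested numerators and denominators $1+Y^{x}_{i,j,k}+Y^{y}_{i,j,k}Y^{x}_{i,j,k}$ and $1+Y^{z}_{i,j,k}+Y^{z}_{i,j,k}Y^{x}_{i,j,k}$. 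Re-identifying the surviving faces with the shifted variables and multiplying out collapses the accumulated products to the displayed rational functions: the face that becomes $\tilde Y^{x}_{i,j+1,k+1}$ is the updated form of a single site, whence the clean prefactor $(Y^{y}_{i,j,k})^{-1}$, while the face that becomes $Y^{x}_{i+1,j,k}$ picks up variable factors from all three sites, whence the prefactor $\tilde Y^{x}_{i+1,j,k}Y^{x}_{i+1,j-1,k}Y^{y}_{i+1,j-1,k}$. The remaining formulas follow from the cyclic symmetry $x\to y\to z\to x$ of the configuration already invoked in the statement.

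The main obstacle is exactly this bookkeeping, not any single computation. One must fix consistent orientations and index conventions on the lozenge tiling so that all the subscript shifts and all the superscripts come out precisely as written; one must carry the orientation signs of Proposition \ref{prop:faceweights}, in particular the factor $(-1)^{m-1}$, through the intermediate graphs — where hexagonal faces appear and disappear — so that no spurious sign survives; and one must check that the three renewal sites are genuinely available in sequence, each being a quadrilateral face at the moment its move is applied, which is the content of the gentrification proposition. Granting all of this, reducing the products of $(1+Y_F)^{\pm1}$ factors to the stated expressions is routine algebra.
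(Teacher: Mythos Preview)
Your approach is essentially the same as the paper's: the paper's proof consists of the single sentence ``One simply follows $Y$-variable dynamics of the associated cluster algebra, whose quiver is shown in Figure~\ref{fig:k3},'' which is precisely what you propose to carry out via Proposition~\ref{prop:face} applied along the gentrification sequence. Your write-up is considerably more explicit about the bookkeeping than the paper's, which simply points to the quiver and leaves the three-mutation composition to the reader.
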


\begin{figure}[h!]
    \begin{center}
    \input{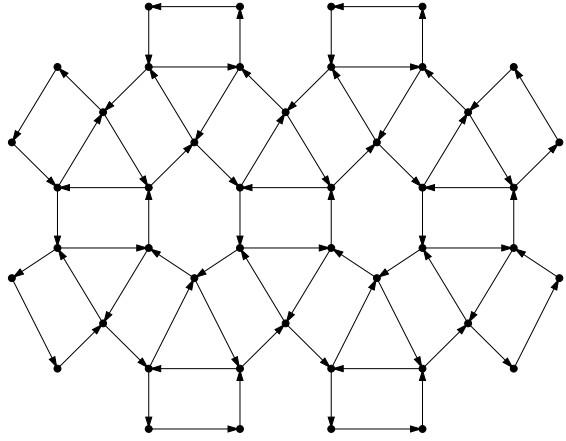}
    \end{center}
    \caption{$Q$-net quiver.}
    \label{fig:k3}
\end{figure}

\begin{proof}
 One simply follows $Y$-variable dynamics of the associated cluster algebra, whose quiver is shown in Figure \ref{fig:k3}. 
\end{proof}

\begin{remark}
 There is of course also $X$-variable cluster dynamics associated with gentrification. It is given by 
$$X_{i,j+1,k+1}^x = \frac{X_{i,j,k}^x X_{i,j+1,k}^z X_{i,j,k+1}^y + X_{i,j,k}^y X_{i,j+1,k}^z X_{i,j,k+1}^x + X_{i,j,k}^z X_{i,j+1,k}^x X_{i,j,k+1}^y}{X_{i,j,k}^y X_{i,j,k}^z}.$$
It is not clear if the $X$-variables have any geometric meaning in terms of $Q$-nets however. 
\end{remark}

\begin{remark}
Several cluster algebra descriptions of geometric systems, including $Q$-nets and discrete Darboux maps, were found independently in \cite{AGR}.  A common situation that in particular holds for $Q$-nets is that there are two distinct sets of geometric quantities that each evolve according to the (coefficient type) dynamics of the same quiver.  One of the goals of \cite{AGR} is to better understand this phenomenon.
\end{remark}

\subsection{Discrete Darboux maps}
Discrete Darboux maps were introduced by Schief \cite{Sch}; we follow the exposition of Bobenko-Suris \cite[Exercise 2.8, 2.9]{BS}.
We identify the set of edges of a $3$-dimensional cubic lattice with $\mathbb Z^3 \times \{x,y,z\}$ in that each edge is in bijection with a node of $\mathbb Z^3$ and one of the three positive directions $x,y,z$ in which the edge points from that node.  

\begin{definition} \cite[Definition 2.1]{BS} \label{def:Darboux}
 A map $f: \mathbb Z^3 \times \{x,y,z\} \rightarrow \mathbb R^3$ is a $3$-dimensional discrete Darboux map if for every face of the cubic lattice the images of its edges are collinear. In other words,
$$f_{i,j,k}^x, f_{i,j,k}^y, f_{i,j+1,k}^x, f_{i+1,j,k}^y \text{ are collinear, }$$
$$f_{i,j,k}^x, f_{i,j,k}^z, f_{i,j,k+1}^x, f_{i+1,j,k}^z \text{ are collinear, }$$
$$f_{i,j,k}^y, f_{i,j,k}^z, f_{i,j+1,k}^z, f_{i,j,k+1}^y \text{ are collinear. }$$
\end{definition}

\begin{remark}
 In Schief's definition \cite{Sch} the function takes values on faces of a cubic lattice, not edges. However, as Schief himself observes in loc. cit. the two are equivalent since one can consider a dual cubic lattice with vertices corresponding to elementary cubes of the original one. 
\end{remark}

One can think of discrete Darboux maps in a dynamical way in a similar fashion to $Q$-nets. Define the generation of an edge in $\mathbb Z^3 \times \{x,y,z\}$ as the sum of its three coordinates. Then it is easy to see, as pointed out in \cite[Exercice 2.8]{BS}, that each generation determines the next one uniquely. For example, $f_{i+1,j+1,k}^z$ is the intersection of the line connecting $f_{i+1,j,k}^y$ to $f_{i+1,j,k}^z$ with the line connecting $f_{i,j+1,k}^x$ to $f_{i,j+1,k}^z$. The fact that six points $f_{i+1,j,k}^y$, $f_{i+1,j,k}^z$, $f_{i,j+1,k}^x$, $f_{i,j+1,k}^z$, $f_{i,j,k+1}^x$, and $f_{i,j,k+1}^y$ lie in one plane is a necessary condition that is easily seen to self-propagate. 

The geometry of a discrete Darboux map is captured by the bipartite graph in Figure \ref{fig:k4}. Here on each edge of the lozenge tiling we place a white vertex signifying a point. To force the four points on the sides of a single lozenge to lie on one line we introduce two black vertices inside. It is clear that if the two triples of points lie on one line, then so do all four points. Figure \ref{fig:k4} should be compared for example with \cite[Figure 7]{KP}.

\begin{figure}[h!]
    \begin{center}
    \input{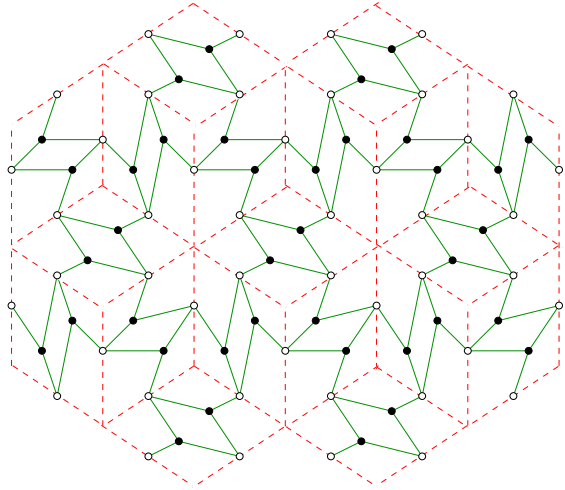}
    \end{center}
    \caption{The bipartite graph of a discrete Darboux map}
    \label{fig:k4}
\end{figure}

\begin{proposition}
 The sequence of square moves shown in Figure \ref{fig:k5} realizes geometrically a step of time evolution of the discrete Darboux map transitioning from vertices $G, H, K$ to vertices $L, M, N$ of the elementary hexahedron. 
\end{proposition}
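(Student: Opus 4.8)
The plan is to follow verbatim the strategy used for the gentrification sequence in the $Q$-net case: check the claimed sequence of urban renewal (square) moves one move at a time using Proposition \ref{prop:evolution}, inserting and later removing degree $2$ vertices as needed, and conclude that the graph returns to a copy of the bipartite graph of Figure \ref{fig:k4} with the generation index advanced by one, carrying the points $L,M,N$ at its surviving white vertices. The key observation making this work is that the black vertices of the Darboux graph encode collinearity, so when we apply \eqref{eq:PFb} at a quadrilateral face $F$ with black vertices $b_1,b_2$, the set $N(b_i)\setminus\{w_1,w_2\}$ spans a line; hence $P(F,b_i)$ is the transverse intersection of the line through the two old adjacent points with that line, which is exactly one of the line-line intersections that define the discrete Darboux evolution (cf.\ Definition \ref{def:Darboux}).

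Concretely, I would first fix the elementary hexahedron in question, record the six input edge-points of the current generation together with the two white vertices of the lozenge on which the first move acts, and locate inside the graph of Figure \ref{fig:k4} the quadrilateral face where that move is performed. Labelling each black vertex by the affine hull of the points at its white neighbors, as in Figure \ref{fig:k2}, one applies Proposition \ref{prop:evolution} to get two new points, each an intersection of two lines; I would name these auxiliary points (the analogues of $E,F,G,H$ in the $Q$-net proof) and verify that they match the intermediate points drawn in Figure \ref{fig:k5}. Iterating through the remaining square moves in the order shown, each new white vertex is again identified through \eqref{eq:PFb}, and after the final move the non-degree-$2$ white vertices carry exactly the next-generation edge-points, i.e.\ $L,M,N$ are realized as $\langle f_{i+1,j,k}^y, f_{i+1,j,k}^z\rangle \cap \langle f_{i,j+1,k}^x, f_{i,j+1,k}^z\rangle$ and its cyclic images. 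A final round of degree $2$ vertex removals then exhibits the graph as isomorphic to that of Figure \ref{fig:k4}, completing the induction.

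The main obstacle is bookkeeping rather than mathematics: one must match the combinatorics of the move sequence in Figure \ref{fig:k5} to the correct geometric intersections, and one must check that every intermediate configuration is still a genuine circuit configuration, so that Proposition \ref{prop:evolution} is applicable at each step — that is, that no intermediate vector degenerates to $0$ and no circuit collapses under the ambient genericity assumptions. This is precisely the self-propagation issue noted after Definition \ref{def:Darboux} for the plane through $f_{i+1,j,k}^y, f_{i+1,j,k}^z, f_{i,j+1,k}^x, f_{i,j+1,k}^z, f_{i,j,k+1}^x, f_{i,j,k+1}^y$, and it can be dispatched by the same argument used in the $Q$-net case. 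Everything else is a finite check of the auxiliary point identities and repeated applications of Proposition \ref{prop:evolution}.
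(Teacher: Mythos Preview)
Your proposal is correct and follows exactly the paper's approach: the paper's proof consists of the single sentence ``We verify the sequence of square moves using Proposition \ref{prop:evolution} on each step,'' and your writeup is a faithful (if more detailed) expansion of precisely that verification.
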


\begin{figure}[h!]
    \begin{center}
    \input{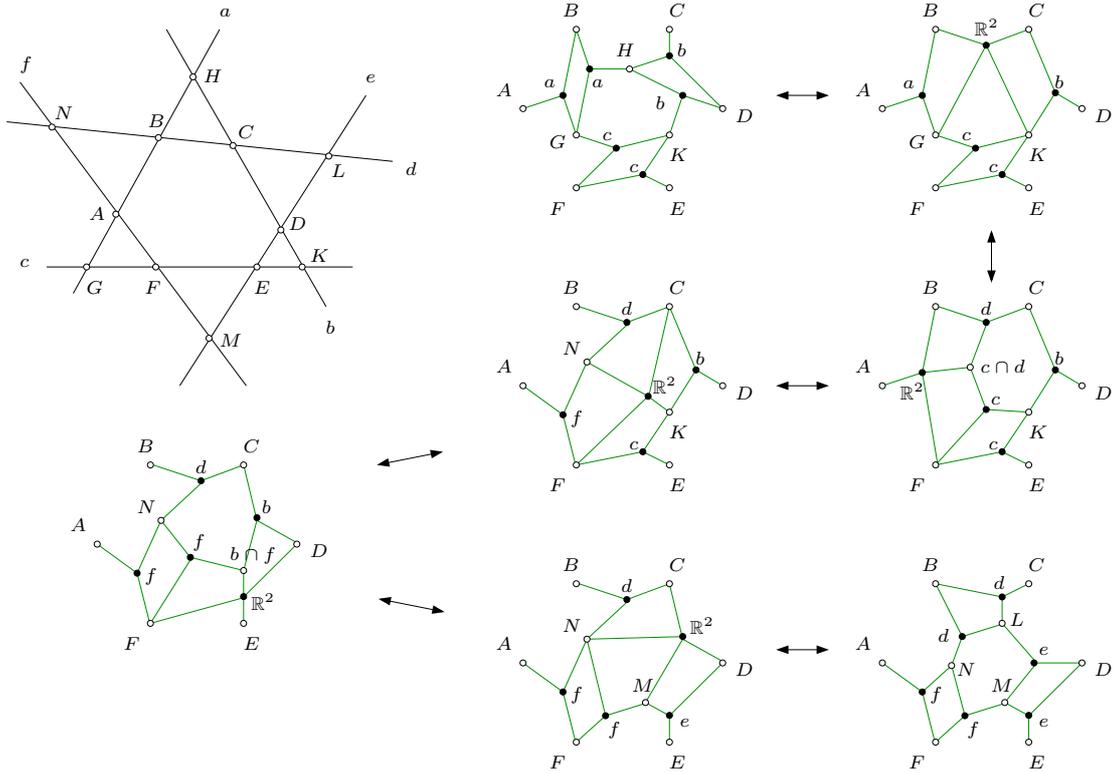}
    \end{center}
    \caption{Superurban renewal.}
    \label{fig:k5}
\end{figure}

\begin{proof}
 We verify the sequence of square moves using Proposition \ref{prop:evolution} on each step. 
\end{proof}

\begin{remark}
 This sequence of square moves has appeared in \cite[Figure 6]{KP}, without the current geometric interpretation, under the name of {\it {superurban renewal}}. 
\end{remark}

Proposition \ref{prop:faceweights} suggests we introduce the following variables, one for each region in Figure \ref{fig:k4}. For the variables associated with lozenges we get 
$$Y_{ijk}^{xy} = -[f_{ijk}^x, f_{i+1,j,k}^y, f_{i,j+1,k}^x, f_{ijk}^y]^{-1},$$ and similar formulas for other pairs of indices. 
The variables associated with vertices of lozenges come in three flavors, as there are three generations of them present in the picture. 
\begin{align*}
Y_{ijk}^{\textrm{in}} &= [f_{ijk}^x, f_{i+1,j,k}^y, f_{ijk}^y, f_{i,j+1,k}^z, f_{ijk}^z, f_{i,j,k+1}^x]^{-1}, \textrm{ for $i+j+k=t$} \\
Y_{ijk}^{\textrm{mid}} &= -[f_{ijk}^x, f_{i,j,k-1}^x, f_{i,j,k-1}^z, f_{i,j+1,k-1}^z, f_{ijk}^y, f_{i-1,j,k}^y, f_{i-1,j,k}^x, f_{i-1,j,k+1}^x, \\
& \quad \quad \quad f_{ijk}^z, f_{i,j-1,k}^z, f_{i,j-1,k}^y, f_{i+1,j-1,k}^y]^{-1}, \textrm{ for $i+j+k=t+1$} \\
Y_{ijk}^{\textrm{out}} &= [f_{i-1,j,k}^x, f_{i-1,j-1,k}^y, f_{i,j-1,k}^y, f_{i,j-1,k-1}^z, f_{i,j,k-1}^z, f_{i-1,j,k-1}^x]^{-1}, \textrm{ for $i+j+k=t+2$}.
\end{align*}
The quiver is shown in Figure \ref{fig:k6}. The $Y$-s evolve according to the $Y$-dynamics formulas of the associated cluster algebra. The formulas are too long to be written here.

\begin{figure}[h!]
    \begin{center}
    \input{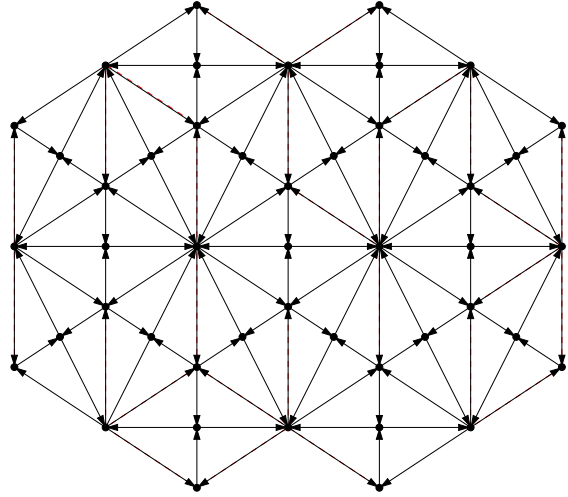}
    \end{center}
    \caption{Discrete Darboux map quiver.}
    \label{fig:k6}
\end{figure}
 
\begin{remark}
 The  $X$-variable dynamics associated with this quiver and sequence of mutations has appeared in \cite[Lemma 2.3]{KP}, see the formulas given there.
\end{remark}

\begin{remark}
The notions of $Q$-nets and discrete Darboux maps are related by projective duality.  As such, it is interesting that we get distinct quivers for these two systems. A general notion of projective duality for vector-relation configurations is developed in \cite{AGR}, capturing in particular the projective duality between $Q$-nets and discrete Darboux maps.
\end{remark}

\section{Geometric configurations for resistor networks and the Ising model} \label{sec:resist}
Goncharov and Kenyon \cite{GK} give a recipe to go from a resistor network given by an arbitrary weighted graph to a collection of edge weights on an associated bipartite graph.  There is an analogous recipe starting from the Ising model on a graph \cite{Dub,GalPyl,KLRR}.  In the case that the initial graph is a triangular grid, these constructions produce the same bipartite graphs discussed above for $Q$-nets (right of Figure \ref{fig:k1}) and discrete Darboux maps (Figure \ref{fig:k4}), respectively.  It turns out that the edge-weightings coming respectively from resistor networks and the Ising model represent very natural subfamilies of these geometric configurations, namely discrete Koenigs net and discrete CKP maps.  In this section we present these two examples of vector-relation configurations providing a link between physics and geometry.

A resistor network is a plane graph $G=(V,E)$ with each edge assigned a positive real weight interpreted as its conductance (i.e. reciprocal of resistance).  Suppose we draw the dual graph $G^* = (V^*,E^*)$ superimposed over a drawing of $G$.  The resulting picture can be interpreted as a bipartite graph $\Gamma$ whose white vertex set is $V \cup V^*$ and whose black vertices are the intersection points of dual edge pairs $e \in E, e^* \in E^*$.  Each edge of both $G$ and $G^*$ is subdivided in two, and all of the resulting half edges together comprise the edge set of $\Gamma$.  Assign each half of an edge in $E$ the same weight as the original edge, and assign each half of an edge in $E^*$ a weight of $1$.  Figure \ref{fig:resistor} illustrates the construction starting from a portion of the triangular grid graph $G$.

\begin{figure}
\centering
\includegraphics[height=2in]{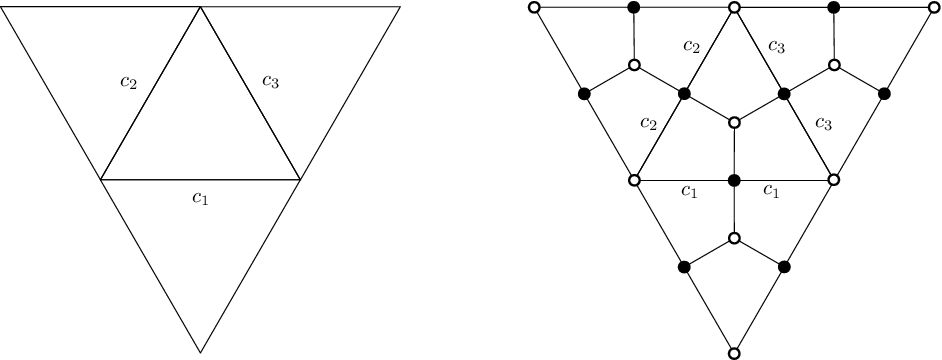}
\caption{Constructing a bipartite graph from a resistor network.  The unlabeled internal edges on the right have weight $1$.}
\label{fig:resistor}
\end{figure}

Let $G = (V,E)$ be the graph given by an infinite triangular grid and let $\Gamma$ be the associated bipartite graph.  Comparing Figures \ref{fig:k1} and \ref{fig:resistor}, we see that vector relation configurations on $\Gamma$ give three generations of a $Q$-net.  By Proposition \ref{prop:moves}, we can introduce signs to the weights coming from $G$ to get such a configuration.  

\begin{proposition}
Suppose $(Q_{i,j,k})$ is a $Q$-net constructed from a resistor network as above.  Then it is in fact a discrete Koenigs net, meaning that the points $Q_{i,j,k}$, $Q_{i+1,j+1,k}$, $Q_{i+1,j,k+1}$, and $Q_{i,j+1,k+1}$ are coplanar for all $i,j,k \in \mathbb{Z}$.
\end{proposition}

\begin{proof}
The graph in Figure \ref{fig:koenig} shows a small piece of $\Gamma$.  Consider each edge to have a negative sign if there is a stroke drawn through it and a positive sign otherwise.  This picture can be tiled to cover the plane and define signs on all edges of $\Gamma$.  The result satisfies the Kasteleyn condition: all faces are quadrilateral and each has either one or three negative edges on its boundary.  As such, the edge weights coming from $G$ multiplied by these signs give the relations of our vector-relation configuration.

The relations at the three black vertices in Figure \ref{fig:koenig} can now be read off as
\begin{equation} \label{eq:resistor}
\begin{split}
u+ c_1w_1 - u_1 - c_1w_2 &= 0 \\
u+ c_2w_2 - u_2 - c_2w_3 &= 0 \\
u+ c_3w_3 - u_3 - c_3w_1 &= 0
\end{split}
\end{equation}
Dividing relation $i$ by $c_i$ and summing we obtain
\begin{displaymath}
\left(\frac{1}{c_1} + \frac{1}{c_2} + \frac{1}{c_3}\right)u - \frac{1}{c_1}u_1 - \frac{1}{c_2}u_2 - \frac{1}{c_3}u_3 = 0.
\end{displaymath}
Therefore the projectivizations of $u,u_1,u_2,u_3$ all lie in a plane.  These four points are precisely $Q_{i,j,k}$, $Q_{i+1,j+1,k}$, $Q_{i+1,j,k+1}$, $Q_{i,j+1,k+1}$ for some $i,j,k$.  The relationship between the dynamics of resistor networks and the dimer model \cite{GK} guarantees that this property is preserved under the sequence of moves described in Section \ref{sec:Qnet}.  The equivalence of the coplanarity condition to other definitions of Koenigs nets is given in \cite[Theorem 2.29]{BS}.
\end{proof}

\begin{figure}
\centering
\includegraphics[height=2in]{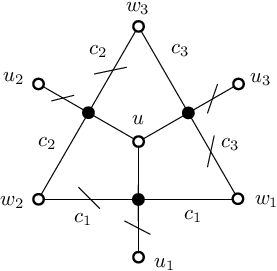}
\caption{A small part of a vector-relation configuration coming from a resistor network.}
\label{fig:koenig}
\end{figure}

\begin{remark}
The relations of \eqref{eq:resistor} can be rearranged to be instances of the discrete Moutard equation on the vectors at the white vertices.  This description gives another path via \cite[Theorem 2.32]{BS} to conclude that the $Q$-net is a discrete Koenigs net.
\end{remark}

\begin{remark}
Let $G=(V,E)$ be a resistor network with weight function $c$.  A \emph{discrete harmonic function} is a function $f$ on $V$, say with values in a vector space, satisfying the condition
\begin{displaymath}
\sum_{v'} c(\overline{vv'})(f(v)-f(v')) = 0
\end{displaymath}
for all $v \in V$.  The harmonic condition is equivalent to the existence of a second function $g$ on $V^*$ satisfying
\begin{displaymath}
g(w)-g(w') = c(\overline{vv'})(f(v)-f(v'))
\end{displaymath}
for each dual edge pair $\overline{vv'} \in E$, $\overline{ww'} \in E^*$ (a convention needs to be fixed for the direction of the crossing of the edges), see e.g. \cite[Section 6]{KLRR}.  If $G$ is the hexagonal grid, so $G^*$ is the dual triangular grid, the above precisely means that $f$ and $g$ together define valid vectors for the associated vector-relation configuration on $G$.  The picture is as in Figure \ref{fig:koenig} except with the non-trivial weights $c_i$ moved to the other half of the edges.  Some care with signs would be needed to extend this idea to other graphs.
\end{remark}

We next consider the Ising model.  We follow the approach of Galashin and Pylyavskyy \cite{GalPyl}.  Figure \ref{fig:Ising} gives an example of a bipartite graph arising from an Ising network.  Each unlabeled edge has weight $1$ and the $s_i,c_i$ are certain positive reals satisfying $c_i^2 + s_i^2 = 1$.  Roughly speaking, the construction replaces each edge of the original graph with a copy of the $Gr_{2,4}$ plabic graph of Figure \ref{fig:gr24}.  In the case of Figure \ref{fig:Ising}, the original graph consisted of a single triangle whose $i$th edge passes through both new edges marked $s_i$.

\begin{figure}
\centering
\includegraphics[height=2in]{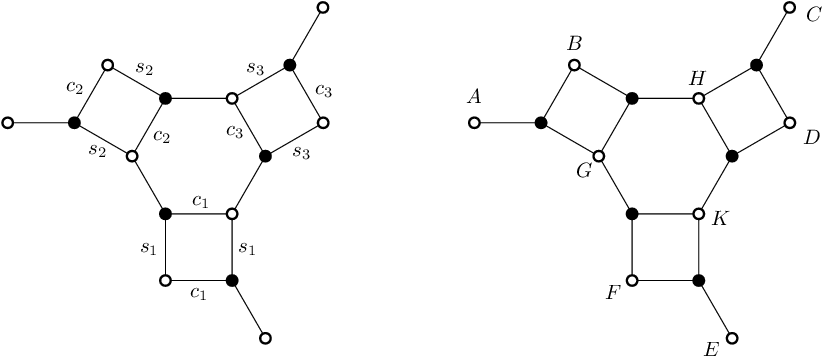}
\caption{A bipartite graph coming from the Ising model.}
\label{fig:Ising}
\end{figure}

\begin{proposition}
Consider a circuit configuration in $\mathbb{R}^3$ of the graph in Figure \ref{fig:Ising} with $A,B,\ldots \in \mathbb{P}^2$ the projectivizations of the points as indicated.  Then the six points $A,B,C,D,E,F$ lie on a conic.
\end{proposition}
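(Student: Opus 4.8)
The plan is to reduce the assertion to an isotropy statement about the $3\times 6$ boundary matrix and then read that statement off from the relations of the configuration.

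\emph{Reduction via the Veronese embedding.} Write a point of $\mathbb{P}^2$ as a column vector $P\in\mathbb{C}^3$ and recall that $P\mapsto PP^{\top}$ is a representative of the Veronese embedding $\mathbb{P}^2\hookrightarrow\mathbb{P}(\mathrm{Sym}^2\mathbb{C}^3)=\mathbb{P}^5$. A conic in $\mathbb{P}^2$ is the zero locus of a symmetric bilinear form on $\mathbb{C}^3$, so six points $P_1,\dots,P_6$ lie on a common conic precisely when the rank-one symmetric matrices $P_1P_1^{\top},\dots,P_6P_6^{\top}$ are linearly dependent in $\mathrm{Sym}^2\mathbb{C}^3$, i.e. $\sum_{i=1}^6\mu_iP_iP_i^{\top}=0$ for some nonzero $(\mu_1,\dots,\mu_6)$; when the $\mu_i$ are all nonzero the conic is honest. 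If $\mathbf{A}=[A\ B\ C\ D\ E\ F]$ is the $3\times 6$ matrix of boundary vectors and $Q=\mathrm{diag}(\mu_1,\dots,\mu_6)$, then $(\mathbf{A}\,Q\,\mathbf{A}^{\top})_{rs}=\sum_i\mu_i(P_i)_r(P_i)_s$, so the conic condition is exactly $\mathbf{A}\,Q\,\mathbf{A}^{\top}=0$ — that is, $\Phi(\v)\in Gr_{3,6}$ lies on the orthogonal Grassmannian of a diagonal quadratic form $Q$ on $\mathbb{C}^6$. It therefore suffices to produce such a $Q$ with nonzero diagonal entries.

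\emph{Producing $Q$.} I would do this directly, in the spirit of the preceding Koenigs-net proposition. Choose Kasteleyn signs on the Ising graph compatible with its three-fold building-block structure (one copy of the $Gr_{2,4}$ gadget of Figure \ref{fig:gr24} per edge of the underlying triangle) and read off the relations $R_b$. For a single edge-gadget, with its two boundary vectors on each side and weights $s_i,c_i$ obeying $c_i^2+s_i^2=1$, eliminate the internal vectors using the relations; since $c_i^2+s_i^2=1$ is itself a quadratic identity, the elimination produces a quadratic relation among the four boundary vectors of that gadget, of the form ``combination of two of the $PP^{\top}$ equals combination of the other two'' with coefficients monomial in $s_i,c_i$. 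Summing the three such relations around the triangle, the contributions attached to the shared triangle-vertex directions cancel, leaving $\sum_{i=1}^6\mu_i\,v_iv_i^{\top}=0$ with $\mu_i$ a nonzero monomial in the $s_i,c_i$; this is the $Q$ we need, and its diagonality is automatic because no cross-term $v_iv_j^{\top}$ with $i\neq j$ ever appears. Alternatively one may skip the computation: by Proposition \ref{prop:plabicSigns}, $\Phi$ restricted to circuit configurations agrees up to Kasteleyn signs with the boundary measurement map of the Ising network, whose image was identified by Galashin--Pylyavskyy \cite{GalPyl} as a dense subset of the totally nonnegative orthogonal Grassmannian; passing to the Zariski closure, $\Phi(\v)$ is isotropic for the relevant diagonal form.

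\emph{Conclusion and main difficulty.} With $Q$ available, the reduction above shows that the images of $A,B,C,D,E,F$ under $P\mapsto PP^{\top}$ are linearly dependent, hence the six points lie on a conic, which is nondegenerate since the $\mu_i$ are nonzero. The delicate part is the direct construction of $Q$: keeping the Kasteleyn signs consistent across the three gadgets, tracking which internal vectors get eliminated in terms of which boundary vectors, and checking that the cancellations forced by $c_i^2+s_i^2=1$ really deliver a gadget-level identity of the stated shape. If instead one invokes Galashin--Pylyavskyy, the point to verify is that their orthogonal form is diagonal in the basis indexed by the boundary vertices, since diagonality in that basis is exactly what makes the Veronese translation valid.
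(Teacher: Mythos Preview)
Your approach is genuinely different from the paper's.  The paper works entirely with the face-weight machinery: it computes the weight $Y$ of the inner hexagon and the weights $Y_i$ of the three quadrilaterals from the edge weights, uses $c_i^2+s_i^2=1$ to derive the single identity $Y^2(1+Y_1)(1+Y_2)(1+Y_3)=1$, then invokes Proposition~\ref{prop:faceweights} to rewrite each face weight as a cross- or triple-ratio of the nine points $A,\ldots,K$.  After cancellation this becomes an equality of two triple ratios which, by Carnot's Theorem, is exactly the condition that $A,B,C,D,E,F$ lie on a conic.  The argument is short, self-contained, and showcases the multi-ratio formula.

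Your reduction via the Veronese embedding to the isotropy condition $\mathbf{A}Q\mathbf{A}^{\top}=0$ for a diagonal $Q$ is correct and is an attractive reformulation.  Citing Galashin--Pylyavskyy for the existence of $Q$ is legitimate---their orthogonal form is indeed diagonal in the boundary-vertex basis---but it outsources the entire content of the proposition.  Your direct construction, however, has a real gap.  It is true that the four white vectors of a single $Gr_{2,4}$ gadget span only a $2$-plane in $\mathbb{C}^3$ (two black relations), and that on that plane the Ising constraint $s_i^2+c_i^2=1$ yields an $OG(2,4)$-type identity $\sum_j\mu_jv_jv_j^{\top}=0$; this lifts verbatim to $3\times3$ symmetric matrices.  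The problem is the summation step: each internal vertex $G,H,K$ is shared by exactly two gadgets, and for the $GG^{\top},HH^{\top},KK^{\top}$ terms to cancel you need the three per-gadget scalings to solve a $3\times3$ homogeneous linear system whose determinant is a cyclic sign product.  Nothing in your sketch forces that determinant to vanish; with the $\mu_j$ equal to $\pm1$ (which is what the $OG(2,4)$ form actually gives, not ``nonzero monomials in $s_i,c_i$'' as you wrote), the determinant is $\pm1\pm1$ and one has to check that the Kasteleyn signs and the cyclic gluing conspire to make it zero.  This is exactly the verification you flag as ``delicate,'' and without it the argument is incomplete.  The paper's route avoids this entirely by never trying to decompose the conic condition gadget-by-gadget.
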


\begin{proof}
Let $Y$ be the face weight of the hexagonal face and let $Y_i$ for $i=1,2,3$ be the weights of the quadrilateral faces.  On the one hand, these can be computed in terms of the edge weights
\begin{align*}
Y &= c_1c_2c_3 \\
Y_i &= \frac{s_i^2}{c_i^2} = \frac{1}{c_i^2} - 1
\end{align*}
from which we get
\begin{displaymath}
Y^2(1+Y_1)(1+Y_2)(1+Y_3) = 1.
\end{displaymath} 
Meanwhile, by Proposition \ref{prop:faceweights} 
\begin{align*}
Y &= [G,B,H,D,K,F]^{-1} \\
Y_1 = -[F,G,K,E]^{-1} \quad \quad 1+Y_1 &= [F,K,E,G] \\
Y_2 = -[G,A,B,H]^{-1} \quad \quad 1+Y_2 &= [G,B,H,A] \\
Y_3 = -[H,C,D,K]^{-1} \quad \quad 1+Y_3 &= [H,D,K,C] \\
\end{align*}
so
\begin{displaymath}
[G,B,H,D,K,F]^2 = [F,K,E,G][G,B,H,A][H,D,K,C].
\end{displaymath}
Every factor occurring in $[G,B,H,D,K,F]$ appears once in the right hand side, and when canceled out, what remains is a triple ratio $[G,E,K,C,H,A]$.  Therefore
\begin{displaymath}
[G,B,H,D,K,F] = [G,E,K,C,H,A].
\end{displaymath}
The relative position of the points is as in the top left of Figure \ref{fig:k5} and it follows from Carnot's Theorem \cite{Carnot} that $A,B,C,D,E,F$ lie on a conic.
\end{proof}

Now suppose we begin with an infinite triangular grid.  The associated bipartite graph is the one in Figure \ref{fig:k4} whose configurations correspond to discrete Darboux maps.  What we have shown is that, in the notation of Definition \ref{def:Darboux}, a Darboux map arising from the Ising model has the property that for all $i,j,k$ the points
\begin{displaymath}
f_{i+1,j,k}^y, f_{i+1,j,k}^z, f_{i,j+1,k}^x, f_{i,j+1,k}^z, f_{i,j,k+1}^x, f_{i,j,k+1}^y
\end{displaymath}
lie on a conic.  This reduction of Darboux maps has been studied by Schief under the name \emph{discrete CKP maps} \cite{Sch}.

\begin{proposition}
Any discrete Darboux map arising from the Ising model on an infinite triangular grid is in fact a discrete CKP map.
\end{proposition}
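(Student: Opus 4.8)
The plan is to combine the geometric input of the preceding Proposition with the invariance of the Ising subvariety under dimer dynamics. An Ising network on the infinite triangular grid yields, via the Galashin--Pylyavskyy construction, precisely the bipartite graph $\Gamma$ of Figure~\ref{fig:k4}: each edge of the grid is replaced by a copy of the $Gr_{2,4}$ gadget, so each triangle of the grid contributes a hexagonal face $H$ of $\Gamma$ together with three incident quadrilateral faces $Q_1,Q_2,Q_3$, carrying weights $s_i,c_i$ with $c_i^2+s_i^2=1$ as in Figure~\ref{fig:Ising}. After introducing Kasteleyn signs, the resulting vector-relation configuration on $\Gamma$ is, by the identification made earlier in this section, a discrete Darboux map $f$ in the sense of Definition~\ref{def:Darboux}.

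First I would apply the preceding Proposition at every triangle of the grid. For each such triangle the six points attached to the edges of $\Gamma$ surrounding the corresponding hexagon are exactly the tuple $f_{i+1,j,k}^{y}, f_{i+1,j,k}^{z}, f_{i,j+1,k}^{x}, f_{i,j+1,k}^{z}, f_{i,j,k+1}^{x}, f_{i,j,k+1}^{y}$ for the appropriate $(i,j,k)$, and the Proposition shows they lie on a conic. Thus the CKP relation holds for every elementary hexahedron whose vertices lie among the three generations of $f$ visible on $\Gamma$.

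It remains to propagate this to all of $\mathbb{Z}^3\times\{x,y,z\}$. The rest of the Darboux map is produced by iterating the superurban renewal sequence of moves of Figure~\ref{fig:k5}, which on face weights acts by the coefficient-type cluster $Y$-dynamics (Proposition~\ref{prop:face}). The locus in face-weight space cut out around each hexagon by the Ising relation $Y_H^2(1+Y_{Q_1})(1+Y_{Q_2})(1+Y_{Q_3})=1$ --- equivalently, the image of the Ising parametrization of \cite{GK, GalPyl} --- is invariant under these mutations, since the Ising subvariety is preserved by dimer dynamics. Hence after any number of time steps the face weights around each hexagonal face still satisfy the Ising relation, and by the converse direction of the computation in the preceding Proposition (Carnot's theorem together with Proposition~\ref{prop:faceweights}) the six corresponding points again lie on a conic. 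Since each generation of a Darboux map determines the next, this establishes the CKP condition for $f$ at every $(i,j,k)$; by \cite{Sch} this is exactly the assertion that $f$ is a discrete CKP map.

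The main obstacle is the propagation step: one must check that the Ising relations among face weights survive a superurban renewal move, i.e.\ that the local gadget of Figure~\ref{fig:Ising} reproduces itself with new parameters $c_i',s_i'$ after the move. This can be done either by tracking the $c,s$-parameters directly through the sequence of square moves of Figure~\ref{fig:k5}, or by invoking the invariance of the Ising subvariety under the associated cluster dynamics from \cite{GK, GalPyl}; some bookkeeping is needed to keep track of which faces of $\Gamma$ play the role of hexagons and which of quadrilaterals before and after the move. A minor additional point is that the conic $\Leftrightarrow$ face-weight-relation equivalence coming from Carnot's theorem must be applied at each cell under the genericity automatically present for circuit configurations.
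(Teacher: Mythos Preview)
Your argument is correct and is essentially the paper's own: apply the preceding conic Proposition to every triangle of the grid, then invoke the invariance of the Ising subvariety under dimer dynamics (analogous to the \cite{GK} citation in the resistor/Koenigs case) to propagate to all generations---the paper in fact leaves this propagation implicit, stating the proposition with no separate proof after the paragraph asserting the conic condition ``for all $i,j,k$''. One small slip: the Darboux bipartite graph displays two consecutive generations of $f$, not three.
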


\section{Configurations on plabic graphs} \label{sec:plabic}
We now consider the plabic graph case, including the main definition in this setting (Section \ref{sec:restriction}) and the proof of Theorem \ref{thm:plabic} (Sections \ref{sec:target}--\ref{sec:unique}).  An alternate point of view for this story in terms of the boundary measurement map will be given in Section \ref{sec:measurement}.  For a quicker summary of how these pieces fit together see Remark~\ref{rem:measurement}.

\subsection{Background on positroid varieties} \label{sec:background}
The proof of Theorem \ref{thm:plabic} utilizes a significant amount of the theory of positroid varieties.  We begin by reviewing the relevant material, generally following \cite{MulSpe} and \cite{LamCDM}.

A \emph{plabic graph} is a finite planar graph $G=(B \cup W, E)$ embedded in a disk with the vertices all colored black or white.  We assume throughout that $G$ is in fact bipartite, that all of its boundary vertices are colored white, and that each boundary vertex has degree $1$ or $0$.  An \emph{almost perfect matching} of $G$ is a matching that uses all internal vertices (and some boundary vertices).  Assume always that $G$ has at least one almost perfect matching.

\begin{remark}
The most common formulation these days \cite{LamCDM, MulSpe} is to assume that $G$ is bipartite with the boundary vertices being uncolored and all having degree $1$.  Starting from such a graph, one can use degree $2$ vertex addition where needed on boundary edges to get each boundary vertex adjacent to a black vertex. At that point, boundary vertices can be colored white to adhere to our conventions.  The exception is if the original graph has a degree $1$ white vertex attached to the boundary.  The above procedure would produce a graph that is not reduced, a condition we will eventually require.  For us, an isolated boundary vertex models this situation.  
\end{remark}

Fix for the moment a plabic graph $G=(B \cup W, E)$.  Let $M = |B|$, $N=|W|$, and let $n$ be the number of boundary vertices.  As all boundary vertices are white that leaves $N-n$ internal white vertices.  Number the elements of $B$ and $W$ respectively $1$ through $M$ and $1$ through $N$  in such a way that the boundary (white) vertices are numbered $1$ through $n$ in clockwise order. Let $k = N-M$.  Each almost perfect matching uses all $M$ black vertices and all $N-n$ internal white vertices.  As such it must use $M-(N-n) = n-k$ boundary vertices, from which we conclude $0 \leq k \leq n$, with the interesting case being $0 < k < n$.

The totally nonnegative Grassmannian is the set of $A \in Gr(k,n)$ for which the Pl\"ucker coordinate $\Delta_J(A)$ is real and nonnegative for all $J$.  The \emph{matroid} of any $A \in Gr(k,n)$ is 
\begin{displaymath}
\mathcal{M} = \{J : \Delta_J(A) \neq 0\}.
\end{displaymath}  
A \emph{positroid} is a set of $k$-element subsets of $\{1,\ldots, n\}$ that arises as the matroid of a point in the totally nonnegative Grassmannian.  We also denote a positroid by $\mathcal{M}$ even though this is a more restrictive notion than a matroid.

Let $\mathcal{M}$ be a positroid.  For $j=1,\ldots,n$, consider the column order $j < j+1 < \ldots < n < 1 < \ldots < j-1$.  Let $I_j$ be the lexicographically minimal element of $\mathcal{M}$ relative to this order.  The collection of sets $(I_1,\ldots, I_n)$ is called the \emph{Grassmann necklace} of $\mathcal{M}$.  The positroids index a decomposition of the complex Grassmannian by \emph{open positroid varieties} $\Pi_{\mathcal{M}}^{\circ}$, defined as intersections of cyclic shifts of Schubert cells encoded by $(I_1,\ldots, I_n)$.  The \emph{positroid variety} $\Pi_{\mathcal{M}}$ is defined to be the Zariski closure of $\Pi_{\mathcal{M}}^{\circ}$.  In order to give quicker definitions, we fall back on the literature.

\begin{theorem}[Knutson--Lam--Speyer \cite{KLS}]
The positroid variety $\Pi_{\mathcal{M}}$ is a closed irreducible variety defined in the Grassmannian by
\begin{displaymath}
\Pi_{\mathcal{M}} = \{A \in Gr_{k,n} : \Delta_J(A) = 0 \textrm{ for all } J \notin \mathcal{M}\}. 
\end{displaymath}
\end{theorem}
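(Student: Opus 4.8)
The theorem makes three assertions about $\Pi_{\mathcal{M}}$ — that it is closed, that it is irreducible, and that it is the common zero locus in $Gr_{k,n}$ of the Plücker coordinates $\Delta_J$ with $J\notin\mathcal{M}$ — and I would prove them in that order. Closedness is immediate, since $\Pi_{\mathcal{M}}$ was defined as a Zariski closure. For irreducibility it suffices to show the open stratum $\Pi_{\mathcal{M}}^{\circ}$ is irreducible, the closure of an irreducible set being irreducible, and the cleanest argument passes through the full flag manifold $\mathrm{Fl}_n$: the Grassmann necklace $(I_1,\dots,I_n)$ determines a pair $v\le w$ in Bruhat order so that, under the projection $\pi\colon\mathrm{Fl}_n\to Gr_{k,n}$, the successive cyclic Schubert conditions cutting out $\Pi_{\mathcal{M}}^{\circ}$ become exactly the conditions describing the image $\pi(\mathcal{R}_{v,w}^{\circ})$ of the open Richardson variety $\mathcal{R}_{v,w}^{\circ}$, a transverse intersection of a Bruhat cell and an opposite Bruhat cell that is smooth, irreducible, and nonempty precisely when $v\le w$. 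The image of an irreducible variety under a morphism is irreducible, so $\Pi_{\mathcal{M}}^{\circ}$, and therefore $\Pi_{\mathcal{M}}$, is irreducible; alternatively one may invoke Postnikov's parametrization of a positroid cell by the edge weights of a reduced plabic graph \cite{Pos06}, which exhibits it as a dominant image of an affine space.

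For the inclusion $\Pi_{\mathcal{M}}\subseteq\{A:\Delta_J(A)=0\ \text{for all}\ J\notin\mathcal{M}\}$, take $A\in\Pi_{\mathcal{M}}^{\circ}$. For each $j$, membership of $A$ in the Schubert cell labelled $I_j$ relative to the $j$-th cyclic order forces a whole family of Plücker coordinates of $A$ to vanish, and assembling these conditions across all $j$ one gets, by the inverse of Postnikov's bijection between positroids and Grassmann necklaces (Oh's description of a positroid as the intersection of the Schubert matroids $\{J:J\ge_j I_j\}$), that $\Delta_J(A)=0$ for exactly the $J\notin\mathcal{M}$. Hence each such $\Delta_J$ vanishes identically on $\Pi_{\mathcal{M}}^{\circ}$, and since Plücker coordinates are regular functions the vanishing persists on the Zariski closure $\Pi_{\mathcal{M}}$.

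The reverse inclusion is where the real work is. First I would record that the open positroid varieties stratify the Grassmannian: reading off the lex-minimal nonvanishing minor in each of the $n$ cyclic orders attaches to an arbitrary $A\in Gr_{k,n}$ a Grassmann necklace, hence a unique positroid $\mathcal{M}^{+}(A)$ with $A\in\Pi_{\mathcal{M}^{+}(A)}^{\circ}$, and $\mathcal{M}^{+}(A)$ is the smallest positroid containing the matroid of $A$. It follows that $A$ satisfies $\Delta_J(A)=0$ for all $J\notin\mathcal{M}$ if and only if $\mathcal{M}^{+}(A)\subseteq\mathcal{M}$, so the right-hand side of the claimed identity equals $\bigsqcup\Pi_{\mathcal{M}'}^{\circ}$, the disjoint union over all positroids $\mathcal{M}'\subseteq\mathcal{M}$. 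It then remains to establish the closure relation $\Pi_{\mathcal{M}'}^{\circ}\subseteq\Pi_{\mathcal{M}}$ for each such $\mathcal{M}'$, which I would again extract from the flag manifold: one has $\overline{\mathcal{R}_{v,w}^{\circ}}=\mathcal{R}_{v,w}=\bigsqcup_{v\le x\le y\le w}\mathcal{R}_{x,y}^{\circ}$, and $\pi$ is proper, hence closed, so $\Pi_{\mathcal{M}}=\pi(\mathcal{R}_{v,w})$ is the union of the projected open Richardson cells $\pi(\mathcal{R}_{x,y}^{\circ})$ over the interval $[v,w]$.

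The main obstacle is pinning down which positroid cells those projected cells are, i.e.\ showing that the Bruhat interval $[v,w]$ maps onto precisely the poset of positroids contained in $\mathcal{M}$, so that the frontier of $\Pi_{\mathcal{M}}^{\circ}$ consists of all the sub-positroids of $\mathcal{M}$ and nothing more. This is the combinatorial core of the Knutson--Lam--Speyer theory \cite{KLS}, resting on the full dictionary among decorated permutations, Grassmann necklaces, the cyclic Bruhat order, and containment of matroids; by comparison the remaining steps are bookkeeping. As an alternative to the Richardson route for this last step, one can import the known closure order on totally nonnegative positroid cells (Postnikov, Rietsch) and bootstrap from the real to the complex variety.
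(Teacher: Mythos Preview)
The paper does not prove this theorem at all: it is stated as a result of Knutson--Lam--Speyer \cite{KLS} and immediately followed by the sentence ``Taking this result as given we can define $\Pi_{\mathcal{M}}^{\circ}$ \ldots''. So there is no proof in the paper to compare your proposal against.

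That said, your sketch is a faithful outline of the actual KLS argument via projected Richardson varieties, and you have correctly located the nontrivial step: identifying the closure order on positroid strata with positroid containment, which amounts to matching the Bruhat interval $[v,w]$ with the poset of sub-positroids of $\mathcal{M}$. Your alternative suggestions (Postnikov's parametrization for irreducibility, bootstrapping from the totally nonnegative closure order) are also legitimate routes. As a write-up this is fine as a roadmap, but since the paper treats the result as a black box, no proof is expected here.
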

Taking this result as given we can define $\Pi_{\mathcal{M}}^{\circ}$ as the set of $A \in \Pi_{\mathcal{M}}$ whose Pl\"ucker coordinates $\Delta_{I_j}(A)$ coming from the Grassmann necklace are all nonzero.

Let $G=(B \cup W, E)$ be a plabic graph.  Following our conventions, all boundary vertices are white.  An almost perfect matching is a matching in $G$ that uses all internal vertices.  Hence it is a matching of $B$ with $W \setminus J$ for some $J \subseteq \{1,\ldots, n\}$ (identified with the boundary vertices) satisfying $|J| = k$.  The \emph{positroid} of $G$, denoted $\mathcal{M}_G$ is the set of $J$ that arise this way as the unused vertices of an almost perfect matching.

The \emph{boundary measurement map} is a function that takes as input a set of nonzero edge weights on $G$ and outputs a point $A \in \Pi_{\mathcal{M}}^{\circ}$, where $\mathcal{M} = \mathcal{M}_G$.  If $\wt: E \to \mathbb{C}^*$ is the weight function then $A$ is defined by its Pl\"ucker coordinates via
\begin{equation} \label{eq:measurement}
\Delta_J(A) = \sum_{\pi} \prod_{e \in \pi} \wt(e) 
\end{equation}
where the sum is over matchings $\pi$ of $B$ with $W \setminus J$.  The result is unchanged by gauge transformations at internal vertices.  The boundary measurement map plays a key role in the study of the nonnegative Grassmannian as it proves that the individual strata therein are cells.

The situation is more complicated in the complex case as the boundary measurement map is not surjective.  Its image, which will play a key role for us, was identified by Muller and Speyer \cite{MulSpe}.  First, they define a remarkable isomorphism $\tau: \Pi_{\mathcal{M}}^{\circ} \to \Pi_{\mathcal{M}}^{\circ}$ called the \emph{right twist}.  Suppose $A = [v_1 \cdots v_n]$ and $\tau(A) = [v_1' \cdots v_n']$.  We do not give the full definition of the twist, but instead state a key property (that defines the $v_j'$ up to scale).  Specifically for each $j$, $v_j'$ is orthogonal to $v_i$ for all $i \in I_j \setminus \{j\}$.

The last piece of technology we need, both in relation to the boundary measurement map and for other purposes, is the notion of zigzag paths in $G$.  A \emph{zigzag path} is a path of $G$ that turns maximally left (respectively right) at each white (respectively black) vertex and either starts and ends at the boundary or is an internal cycle.  Each directed edge can be extended to a zigzag path, so there are two zigzag paths through each edge.  Define an \emph{intersection} of two zigzags to be such an edge that they traverse in opposite directions.  Say that $G$ is \emph{reduced} if
\begin{itemize}
\item each zigzag path starts and ends at the boundary,
\item each zigzag of length greater than two has no self intersections
\item no pair of distinct zigzags have a pair of intersections that they encounter in the same order.
\end{itemize}

If $G$ is reduced then there are exactly $n$ zigzags, one starting at each boundary vertex.  Call $j$ the number of the zigzag starting at vertex $j$.  A zigzag that does not self intersect divides the disk into two regions.  For $F$ a face of $G$, let $S_F$ denote the set of $j$ for which $F$ lies to the left of zigzag number $j$.  There are two corner cases.  If $j$ is attached to a degree $1$ black vertex $b$ then zigzag $j$ goes from $j$ to $b$ and back to $j$.  In this event all faces are considered to be to the right of the zigzag.  On the other hand, if $j$ is an isolated boundary vertex then zigzag $j$ is an empty path that all faces are considered to lie to the left of.  With these conventions one can show that all $S_F$ have size $k$.

\begin{theorem}[Muller-Speyer \cite{MulSpe}, Theorem 7.1] \label{thm:mulspe}
The image of the boundary measurement map is the set of $A \in \Pi_{\mathcal{M}}^{\circ}$ whose twist $A' = \tau(A)$ satisfies 
\begin{displaymath}
\Delta_{S_F}(A') \neq 0
\end{displaymath}
for all faces $F$ of $G$.  This set is dense in $\Pi_{\mathcal{M}}^{\circ}$ and in fact the coordinates $\Delta_{S_F}(A')$ give it the structure of an algebraic torus.
\end{theorem}

\subsection{The boundary restriction map} \label{sec:restriction}
Theorem \ref{thm:plabic} should be understood with respect to a modified definition of vector-relation configurations specifically catered to plabic graphs.  In this section, we first provide this definition, then we reformulate Theorem \ref{thm:plabic} to clarify the connection with the various notions described in Section \ref{sec:background}.

Let $G= (B \cup W, E)$ be a plabic graph with all the notation of Section \ref{sec:background}.  In defining a vector-relation configuration on $G$, we will see the natural ambient dimension is $k=N-M$.  As such, we simply fix as our vector space $V = \mathbb{C}^k$.  It is also natural to allow boundary vectors to be zero, and to add some genericity assumptions.  In the following, let $K_{ij}$ denote the coefficient of the vector $v_j$ in relation $R_i$ where $1 \leq i \leq M$ and $1 \leq j \leq N$.

\begin{definition} \label{def:vr-plabic}
A \emph{vector-relation configuration} on a plabic graph $G$ is a choice of vector $v_w \in V=\mathbb{C}^k$ for each $w \in W$ and a non-trivial linear relation $R_b$ among the neighboring vectors of each $b \in B$ such that
\begin{itemize}
\item the vector $v_w$ at each internal white vertex $w$ is nonzero,
\item the boundary vectors $v_1,\ldots, v_n$ span $V$, and
\item the $M \times N$ matrix $K = (K_{ij})$ is full rank.
\end{itemize}
Two configurations are called \emph{gauge equivalent} if they are related by a sequence of gauge transformations, in the sense of Definition \ref{def:gauge}, at internal vertices.
\end{definition}

Let $\mathcal{C}_G$ denote the space of gauge equivalence classes of vector-relation configurations on $G$ modulo the action of $GL_k(\mathbb{C})$.  If $(\v, \Rvec) \in \mathcal{C}_G$ then by assumption $v_1,\ldots, v_n$ span $V = \mathbb{C}^k$.  The $v_i$ are defined up to a common change of basis so $A = [v_1 \cdots v_n]$ is a well-defined point of $Gr_{k,n}$.  We use $\Phi$ to denote the map $\Phi:\mathcal{C}_G \to Gr_{k,n}$ taking $(\v, \Rvec)$ to $A$, and we call $\Phi$ the \emph{boundary restriction map}.  

In this language, Theorem \ref{thm:plabic} asserts that $\Phi$ maps $\mathcal{C}_G$ into $\Pi_{\mathcal{M}}$ and that generic points in this positroid variety have unique preimages.  We next identify a set $T_G \subset \Pi^{\circ}_{\mathcal{M}}$ whose elements are sufficiently generic for this purpose.  Specifically, let
\begin{equation} \label{eq:generic}
T_G = \{A \in \Pi^{\circ}_{\mathcal{M}} : \Delta_{S(F)}(A') \neq 0 \textrm{ for all faces } F \textrm{ of } G\}
\end{equation}
where $A'=\tau(A)$ is the result of applying the right twist to $A$.

\begin{remark} \label{rem:measurement}
Note that $T_G$ is precisely the image of the boundary measurement map, as demonstrated by Muller and Speyer \cite{MulSpe} and reviewed in Theorem \ref{thm:mulspe}.  In fact, the boundary restriction map and the boundary measurement map are very closely related, a connection we explore in Section~\ref{sec:measurement}.  Once that is done many of our results follow from analogous ones in \cite{MulSpe}.  We focus first on presenting a derivation of Theorem~\ref{thm:plabic} which uses neither the connection between the boundary restriction map and the boundary measurement map nor Theorem \ref{thm:mulspe}. We will however make extensive use of background material developed in \cite{MulSpe}, specifically in Sections 2 -- 6 and Appendix B of that paper. After proving Theorem~\ref{thm:plabic}, we provide a proof of Theorem~\ref{thm:main} (a stronger version of Theorem~\ref{thm:plabic}), which does make use of Theorem \ref{thm:mulspe}.
\end{remark}

As an example, we prove without appealing to Theorem \ref{thm:mulspe} that $T_G \subseteq \Pi_{\mathcal{M}}$ is dense.  It suffices to show $T_G$ is dense in $\Pi^{\circ}_{\mathcal{M}}$ since the latter is dense in $\Pi_{\mathcal{M}}$.  By \eqref{eq:generic}, we have a collection of open conditions and it remains to show that each is satisfiable, i.e. that no $\Delta_{S(F)}(A')$ is uniformly zero on $\Pi^{\circ}_{\mathcal{M}}$.  Let $F$ be a face of $G$.  By \cite[Theorem 5.3]{MulSpe}, there is an almost perfect matching of $G$ avoiding the set $S(F)$ of boundary vertices.  Hence, applying the boundary measurement map (see \eqref{eq:measurement}) to any choice of positive edge weights gives a point $A' \in \Pi^{\circ}_{\mathcal{M}}$ with $\Delta_{S(F)}(A') \neq 0$.  By \cite[Corollary 6.8]{MulSpe} the twist is invertible on $\Pi^{\circ}_{\mathcal{M}}$ and we can recover $A$.

\subsection{Identifying the target} \label{sec:target}
We begin with the first part of Theorem \ref{thm:plabic}, namely that the positroid variety $\Pi_{\mathcal{M}}$ can be taken to be the target of the boundary restriction map $\Phi$.

\begin{lemma} \label{lem:rowspace}
Let $\v \in \mathcal{C}_G$.  There is a surjective linear map $\phi: \mathbb{C}^N \to V$ with kernel equal to the row span $\row(K)$ such that each $v_w \in V$ is mapped to by the corresponding coordinate vector $e_w \in \mathbb{C}^N$.
\end{lemma}

\begin{proof}
We can define a linear map $\phi$ via $\phi(e_w) = v_w$ for all $w \in W$.  As $v_1,\ldots, v_n$ span $V$, this map is surjective.  Any given row of $K$ is indexed by some $b \in B$, and equals $\sum_w K_{bw}e_w$.  As such it gets mapped to $\sum_w K_{bw}v_w$ which equals zero by relation $R_b$.  So $\row(K) \subseteq \ker(\phi)$.  But 
\begin{displaymath}
\dim \row(K) = M
\end{displaymath}
since $K$ is full rank and
\begin{displaymath}
\dim \ker(\phi) = N - \dim(V) = N-k = M
\end{displaymath}
since $\phi$ is surjective so $\ker(\phi) = \row(K)$.
\end{proof}

The previous establishes that a configuration in this setting is completely determined by $K$.  More precisely, say two vector-relation configurations on the same graph are isomorphic if
\begin{itemize}
\item there is an isomorphism of their ambient spaces that identifies corresponding (at the same white vertex) vectors, and
\item the corresponding (at the same black vertex) relations are equal.
\end{itemize}
Then, $K$ determines a configuration in the ambient space $\mathbb{C}^N / \row(K)$ as above whose vectors are projections of the coordinate vectors and which is isomorphic to any other configuration giving rise to $K$.

\begin{lemma} \label{lem:determinant}
Let $\v \in \mathcal{C}_G$ and let $S = \{w_1,\ldots, w_k\} \subseteq W$.  Then
\begin{displaymath}
\det [v_{w_1} \cdots v_{w_k}] = \pm \lambda \Delta_{W \setminus S}(K)
\end{displaymath}
where $\lambda$ is a nonzero scalar not depending on $S$ and $\Delta_J$ denotes the determinant of a submatrix consisting of all rows and a specified set $J$ of columns of a matrix. 
\end{lemma}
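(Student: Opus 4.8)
The plan is to reduce the statement to a standard fact about complementary subspaces and then prove that fact by row reduction. First I would invoke Lemma \ref{lem:rowspace} to obtain the surjection $\phi\colon\mathbb{C}^N\to V$ with $\phi(e_w)=v_w$ and $\ker\phi=\row(K)$. Fixing the standard basis of $V=\mathbb{C}^k$, let $P$ be the $k\times N$ matrix $[v_1\ \cdots\ v_N]$, so that $\phi$ is ``multiplication by $P$'' and, for $S=\{w_1,\dots,w_k\}$, $\det[v_{w_1}\cdots v_{w_k}]=\pm\Delta_S(P)$, the sign recording the chosen ordering of $S$. Since $\ker P=\row(P)^{\perp}$ for the standard symmetric form on $\mathbb{C}^N$, the identity $\ker\phi=\row(K)$ says exactly that $\row(K)=\row(P)^{\perp}$, equivalently $PK^{\top}=0$ with $\rank K=N-k$. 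So it suffices to prove the following linear-algebra statement: if $P$ is $k\times N$ of rank $k$ and $K$ is $(N-k)\times N$ of rank $N-k$ with $PK^{\top}=0$, then there is a nonzero scalar $\lambda$, independent of $S$, such that $\Delta_S(P)=\pm\lambda\,\Delta_{W\setminus S}(K)$ for every $k$-subset $S$.

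To prove this I would use that left-multiplying $P$ (resp.\ $K$) by an invertible square matrix rescales all of its maximal minors by one common nonzero factor and leaves its row space unchanged, so such normalizations do not affect the claim. Choose a $k$-subset $J_0$ with $\Delta_{J_0}(P)\neq0$; after the normalization $P\mapsto (P_{J_0})^{-1}P$ the matrix $P$ is the identity on the columns of $J_0$ and an arbitrary $k\times(N-k)$ block $A$ on the columns of $W\setminus J_0$. Solving $Px=0$ then shows $\row(K)=\ker P$ equals the row space of the matrix that is $-A^{\top}$ on the columns of $J_0$ and $I_{N-k}$ on the columns of $W\setminus J_0$; after the corresponding normalization of $K$ we may take $K$ to be exactly this matrix. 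Now Laplace-expand $\Delta_S(P)$ along its identity columns (those indexed by $S\cap J_0$): it equals, up to a sign depending only on $S$, the minor of $A$ on rows $J_0\setminus S$ and columns $S\setminus J_0$. Laplace-expanding $\Delta_{W\setminus S}(K)$ along its identity columns (those indexed by $W\setminus(S\cup J_0)$) gives, up to a sign depending only on $S$, the transpose of the very same minor of $A$, hence the same number. Comparing, $\Delta_S(P)=\pm\Delta_{W\setminus S}(K)$ for the normalized matrices, and reinstating the two normalization factors — both independent of $S$ — produces the required $\lambda$.

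I do not expect a genuine obstacle here: the content is elementary, and the only thing needing care is the sign bookkeeping in the two Laplace expansions, which is routine. Indeed the combined sign is exactly the Koszul sign $\epsilon_S$ determined by $e_{W\setminus S}\wedge e_S=\epsilon_S\,(e_1\wedge\cdots\wedge e_N)$, so one could instead run the argument invariantly: the class of $v_{w_1}\wedge\cdots\wedge v_{w_k}$ in $\bigwedge^k(\mathbb{C}^N/\row K)\cong\bigwedge^kV$, paired against the fixed vector $r_1\wedge\cdots\wedge r_{N-k}$ assembled from the rows of $K$, equals $e_{w_1}\wedge\cdots\wedge e_{w_k}\wedge r_1\wedge\cdots\wedge r_{N-k}=\pm\,\Delta_{W\setminus S}(K)\,(e_1\wedge\cdots\wedge e_N)$, which exhibits $\lambda$ directly as the constant comparing the chosen volume form on $V$ with this pairing. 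Either way, the one substantive input is Lemma \ref{lem:rowspace}, and everything past it is the classical duality between the Pl\"ucker coordinates of a subspace and those of its orthogonal complement.
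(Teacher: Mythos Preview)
Your proposal is correct. Your primary route---normalizing $P$ and $K$ to block form and comparing complementary minors via Laplace expansion---is a more hands-on proof of the classical Pl\"ucker duality between a subspace and its annihilator than what the paper does. The paper's argument is precisely your second, invariant formulation: it defines a volume form on $\mathbb{C}^N/\row(K)$ by sending $u_1,\ldots,u_k$ to the determinant of the $N\times N$ matrix obtained by stacking lifts of the $u_i$ on top of $K$, observes that evaluated on $e_{w_1},\ldots,e_{w_k}$ this gives $\pm\Delta_{W\setminus S}(K)$, and lets $\lambda$ be the factor by which the isomorphism $\mathbb{C}^N/\row(K)\cong V$ scales volume. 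Your wedge-product pairing $e_{w_1}\wedge\cdots\wedge e_{w_k}\wedge r_1\wedge\cdots\wedge r_{N-k}$ is literally this determinant. So the two proofs agree at the invariant level; your first approach just makes the comparison explicit at the cost of a choice of $J_0$, which buys you a concrete handle on the sign if you want it.
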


\begin{proof}
By Lemma \ref{lem:rowspace}, $\v$ is isomorphic to the configuration of the projections of coordinate vectors in $U = \mathbb{C}^N / \row(K)$.  Viewing elements of $U$ as equivalence classes of row vectors in $\mathbb{C}^N$, there is a well-defined, multilinear, alternating map
\begin{displaymath}
(u_1, \ldots, u_k) \in U^k \mapsto \det \left[\begin{array}{c} u_1 \\ \vdots \\ u_k \\ K \end{array}\right] \in \mathbb{C}.
\end{displaymath}
Applied to $e_{w_1}, \ldots, e_{w_k}$, the result equals $\pm \Delta_{W \setminus S}(K)$ (if $S$ is in increasing order the sign is determined by the parity of $(w_1-1) + \ldots + (w_k-k)$).  Pulling back via the isomorphism, this map corresponds to some nonzero multiple of the determinant and in particular gives the desired formula for $\det [v_{w_1} \cdots v_{w_k}]$.
\end{proof}

\begin{corollary} \label{cor:KToA}
Let $\v \in \mathcal{C}_G$ and $A = \Phi(\v)$.  Let $J = \{j_1,\ldots, j_k\}$ with $1 \leq j_1 < \ldots < j_k \leq n$.  Then the Pl\"ucker coordinates of $A$ are
\begin{equation} \label{eq:KToA}
\Delta_J(A) = \pm \Delta_{W \setminus J}(K)
\end{equation}
with the sign determined by the parity of $(j_1-1) + \ldots + (j_k-k)$.
\end{corollary}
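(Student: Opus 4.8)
The plan is to deduce \eqref{eq:KToA} as a direct specialization of Lemma~\ref{lem:determinant}. First I would recall that, representing $A = \Phi(\v)$ by the $k \times n$ matrix $[v_1 \cdots v_n]$, the Pl\"ucker coordinate $\Delta_J(A)$ is by definition the maximal minor of this matrix on the column set $J$, that is $\Delta_J(A) = \det[v_{j_1} \cdots v_{j_k}]$. Under the numbering fixed in Section~\ref{subsec:plabic} the boundary vertices are precisely the white vertices labelled $1,\ldots,n$, so $J = \{j_1 < \cdots < j_k\}$ may simultaneously be regarded as a $k$-element subset $S$ of $W$; note that $|W \setminus J| = N-k = M$, so $\Delta_{W\setminus J}(K)$ is a genuine (square) minor of $K$ and the right-hand side of \eqref{eq:KToA} makes sense.

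Next I would invoke Lemma~\ref{lem:determinant} with $S = J$. It yields $\det[v_{j_1} \cdots v_{j_k}] = \pm \lambda\, \Delta_{W\setminus J}(K)$, where $\lambda \neq 0$ is independent of $J$ and the sign is exactly $(-1)^{(j_1-1)+\cdots+(j_k-k)}$, which is the sign asserted in the corollary. The only remaining point is the scalar $\lambda$: since $A$ is a point of $Gr_{k,n}$, its vector of Pl\"ucker coordinates is defined only up to a common nonzero factor, so \eqref{eq:KToA} is to be read as an equality of projective coordinate vectors; alternatively, fixing the representative of $A$ coming from Lemma~\ref{lem:rowspace} (identify $V$ with $\mathbb{C}^N/\row(K)$, take $v_w$ to be the image of the coordinate vector $e_w$, and use the volume form from the proof of Lemma~\ref{lem:determinant}) forces $\lambda = 1$. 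With either reading, the sign attached to each individual $J$ is as stated, which completes the proof.

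There is essentially no hard step here; the substance is entirely carried by Lemma~\ref{lem:determinant}, and what remains is bookkeeping. The three points I would be careful about are: that the ``increasing order'' sign convention quoted from that lemma matches the exponent $(j_1-1)+\cdots+(j_k-k)$ in the corollary; that the absorbed factor $\lambda$ genuinely does not depend on $J$, so that passing to the Grassmannian (or normalizing the chosen matrix representative of $A$) legitimately removes it in a single uniform rescaling; and that the identification $J \subseteq \{1,\ldots,n\} \hookrightarrow W$ is compatible with the complement $W \setminus J$ that appears on the right-hand side of \eqref{eq:KToA}.
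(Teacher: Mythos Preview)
Your argument is correct and is essentially identical to the paper's own proof: apply Lemma~\ref{lem:determinant} with $S=J$ and absorb the common nonzero factor $\lambda$ into the projective ambiguity of Pl\"ucker coordinates. Your write-up is more detailed (explicitly checking the size of $W\setminus J$ and the sign convention), but the substance is the same.
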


\begin{proof}
A representing matrix for $A$ is $[v_1 \cdots v_n]$, and we can compute its minors using Lemma \ref{lem:determinant}.  The Pl\"ucker coordinates are only defined up to multiplication by a common constant so we can ignore the $\lambda$'s.
\end{proof}

Unfolding \eqref{eq:KToA}, we have
\begin{displaymath}
\Delta_J(A) = \pm \sum_{f} \sgn(f) \prod_{b \in B} K_{b,f(b)}
\end{displaymath}
where the sum is over bijections $f$ from $B$ to $W \setminus J$ and $\sgn(f)$ is defined by thinking of $f$ as a permutation (we assumed linear orders on $B$ and $W$, and the latter restricts to a linear order on $W \setminus J$). In fact $K_{bw} = 0$ unless $\overline{bw}$ is an edge, so we only get a nonzero term if the set of $\overline{bf(b)}$ forms an almost perfect matching of $G$ avoiding the vertex set $J$.  So we can rewrite the formula as
\begin{equation} \label{eq:restriction}
\Delta_J(A) = \pm \sum_{\pi} \sgn(\pi) \prod_{\overline{bw} \in \pi} K_{bw},
\end{equation}
the sum being over such almost perfect matchings $\pi$.

\begin{proposition} \label{prop:target}
If $\v \in \mathcal{C}_G$ then $\Phi(\v) \in \Pi_{\mathcal{M}}$.
\end{proposition}

\begin{proof}
Let $J \subseteq \{1,\ldots, n\}$ with $|J| = k$ and suppose $J \notin \mathcal{M}$.  By definition of $\mathcal{M}$ there is no almost perfect matching of $G$ avoiding $J$.  Therefore the sum in \eqref{eq:restriction} is empty and we get $\Delta_J(\Phi(v)) = 0$.  So $\Phi(v)$ satisfies the defining equations of $\Pi_{\mathcal{M}}$.
\end{proof}

The last result identifies linear dependent sets of size $k$ among the boundary vectors.  The result generalizes easily.  

\begin{proposition} \label{prop:matroid}
Let $\v \in \mathcal{C}_G$ and let $S \subseteq W$ be any set of white vertices.  Suppose there is no matching of $B$ with a subset of $W$ disjoint from $S$.  Then the vectors $v_w$ for $w \in S$ are linearly dependent.
\end{proposition}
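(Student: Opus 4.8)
The plan is to reduce the claim to producing a single nonzero vector in the row span of $K$ that is supported on the columns indexed by $S$. The mechanism is this: for any row vector $\mu \in \mathbb{C}^B$, the vector $r := \mu^{\top} K \in \mathbb{C}^W$ satisfies $\sum_{w \in W} r_w v_w = \sum_{b \in B} \mu_b \big( \sum_{w} K_{bw} v_w \big) = 0$, since each relation $R_b$ evaluates to $0$; this is exactly the inclusion $\row(K) \subseteq \ker \phi$ from Lemma~\ref{lem:rowspace}. Consequently, if we can find $\mu$ with $r = \mu^\top K \neq 0$ but $r_w = 0$ for all $w \notin S$, then $\sum_{w \in S} r_w v_w = 0$ is a nontrivial linear dependence among $\{v_w : w \in S\}$, which is what we want.

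First I would record the two conditions on $\mu$. Let $K'$ be the submatrix of $K$ consisting of the columns outside $S$, i.e. $K' = K_{\bullet,\, W \setminus S}$. The requirement that $r$ vanish off $S$ says precisely $\mu^\top K' = 0$, i.e. $\mu$ lies in the left kernel of $K'$. The requirement $\mu^\top K \neq 0$ comes for free once $\mu \neq 0$: by hypothesis $K$ is a full rank $M \times N$ matrix and $M = |B| \le |W| = N$, so its rows are linearly independent and it has trivial left kernel. So everything comes down to showing $K'$ has a nonzero left kernel, equivalently $\rank(K') < M$.

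This is where the combinatorial hypothesis enters, via the elementary inequality $\rank(X) \le$ (term rank of $X$), valid for any matrix $X$: if $X$ has an $r \times r$ submatrix with nonzero determinant, then that determinant's permutation expansion has a nonvanishing term, which exhibits $r$ nonzero entries of $X$ lying in distinct rows and columns. For $X = K'$, the nonzero entries occur only in positions $\overline{bw}$ with $\overline{bw} \in E$ and $w \notin S$, so a set of such entries in distinct rows and columns is nothing but a matching in $G$ between $B$ and $W \setminus S$. By assumption $G$ admits no matching saturating all of $B$ whose white endpoints avoid $S$, so the term rank of $K'$ is at most $|B| - 1$; hence $\rank(K') \le M - 1 < M$. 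This yields the required nonzero $\mu$, and the proof concludes as above.

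I do not anticipate a serious obstacle: the one place to be careful is the bookkeeping ``$\rank \le$ term rank $\le$ matching number'' together with the precise reading of the hypothesis (``no matching of $G$ saturating $B$ with all partners outside $S$''); everything else is immediate from the definition of a configuration on a plabic graph (full-rankness of $K$, hence trivial left kernel) and the defining relations $R_b$. In particular no Hall/K\"onig machinery is needed --- the term-rank inequality suffices --- although one could alternatively bound $\rank(K')$ by exhibiting a vertex cover of size $< M$ of the support bipartite graph of $K'$.
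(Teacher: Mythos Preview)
Your argument is correct, and it takes a genuinely different route from the paper's proof.

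The paper splits into cases on $|S|$. When $|S|=k$, it invokes Lemma~\ref{lem:determinant}, which gives $\det[v_{w_1}\cdots v_{w_k}]=\pm\lambda\,\Delta_{W\setminus S}(K)$; the right side is a sum over matchings of $B$ with $W\setminus S$ and hence vanishes under the hypothesis. When $|S|<k$, the paper augments $S$ to a $k$-set and uses that the configuration spans $V$ to conclude the smaller set cannot extend to a basis. (The case $|S|>k$ is trivial in $\mathbb{C}^k$.)

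Your proof instead works uniformly in $|S|$: you produce an explicit nonzero row combination $\mu^\top K$ supported on $S$, using only the inclusion $\row(K)\subseteq\ker\phi$ from Lemma~\ref{lem:rowspace} together with the elementary inequality $\rank(K')\le\text{term rank}(K')$. This avoids the determinant lemma and the case split, and is in fact constructive---it exhibits the dependence rather than inferring its existence from a vanishing minor. The paper's approach, on the other hand, plugs directly into the machinery (Lemma~\ref{lem:determinant}, Corollary~\ref{cor:KToA}) already set up for the boundary measurement map, so it costs nothing extra in context. Both are short; yours is the more self-contained of the two.
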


\begin{proof}
First suppose $|S| = k$.  Then the $v_w$ for $w \in S$ form a square matrix whose determinant can be calculated using Lemma \ref{lem:determinant}.  There is no matching of $B$ with $W \setminus S$ so the right hand side is zero and the vectors are dependent.  If $|S| < k$ then we can augment $S$ arbitrarily to get a set of size $k$ satisfying the same hypotheses and hence corresponding to a dependent set.  In other words $\{v_w : w \in S\}$ cannot be extended in the configuration to a basis of $V$.  All vectors together span $V$ so it follows that the set is dependent.
\end{proof}

\begin{remark}
Restricting to the $|S| = k$ case, one might hope for the stronger statement that $\{v_w : w \in S\}$ is a basis if and only if there is a matching of $B$ with $W \setminus S$.  The if direction only holds for generic $\v \in \mathcal{C}_G$.  In the generic case, the matroid of the vectors of $\v$ is dual to the so-called \emph{transversal matroid} of the bipartite graph $G$.  This result is very similar to one of Lindstr\"om \cite{Lin}.  The similarity comes as no surprise as Lindstr\"om's famed lemma, which he introduced in that paper, is an essential ingredient in the boundary measurement map.
\end{remark}

\subsection{The reconstruction map} 
In this subsection, we begin to prove the second part of Theorem \ref{thm:plabic}.  Specifically, we define a map $\Psi$ on a dense subset of $\Pi_{\mathcal{M}}$ which will turn out to be a right inverse of $\Phi$.  As $\Psi$ has the effect of reconstructing the entire configuration from just the boundary vectors, we term it the \emph{reconstruction map}.  We temporarily add an assumption on $G$ that there is no isolated boundary vertex and no boundary vertex attached to a vertex of degree $1$.  Since $G$ is reduced this condition is equivalent to saying $\mathcal{M}$ has a basis containing $j$ and one excluding $j$ for each $j=1,\ldots, n$.  It follows that $j \in I_j$ and $j \notin I_{j+1}$.

It is convenient at this point to introduce an alternate representation of zigzag paths known as strands.  A strand is obtained from a zigzag by taking each turn of the zigzag and replacing it with an arc connecting the midpoints of two edges involved.  Based on the zigzag rules, the arc appears to go clockwise around a white vertex and counterclockwise around a black vertex.  The \emph{strand} is obtained by combining all arcs of a zigzag as well as small pieces at the beginning and end to connect it to the boundary of the disk.  The strands together form an \emph{alternating strand diagram}, one example of which is given in Figure \ref{fig:strand}.  Note that strand number $i$ begins slightly clockwise relative to boundary vertex $i$.  An intersection of zigzags as defined previously translates to an intersection in the usual sense of strands.

\begin{figure}
\centering
\includegraphics[height=2.5in]{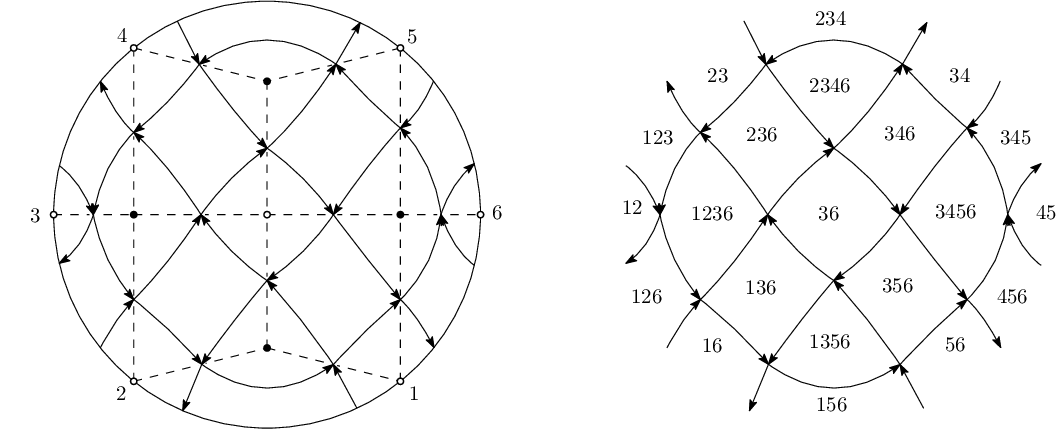}
\caption{The alternating strand diagram for a plabic graph (left) and the associated labeling by sets of the faces and vertices of the graph (right).}
\label{fig:strand}
\end{figure}

Each region of an alternating strand diagram has boundary oriented clockwise, counterclockwise, or in an alternating manner, and the region corresponds respectively to a white vertex, black vertex, or face of $G$.  Use the notations $S_w$, $S_b$, and $S_F$ to denote the set of strands that the region in the strand diagram associated to $w$, $b$, or $F$ 
lies to the left of.  For $F$ a face, this definition agrees with the previously given zigzag one.

\begin{remark}
To avoid strands altogether, one could define $S_w$ and $S_b$ in terms of the face labels via $S_w = \cap_F S_F$ and $S_b = \cup_F S_F$ where both formulas range over all faces containing the vertex in question.
\end{remark}

\begin{proposition} \label{prop:labels}
Let $F$ be a face of $G$, $b \in B$, and $w \in W$.
\begin{itemize}
\item $|S_F| = k$, $|S_w| = k-1$, and $|S_b| = k+1$.
\item If $b$ and $w$ are on the boundary of $F$ then $S_w \subseteq S_F \subseteq S_b$.
\end{itemize}
\end{proposition}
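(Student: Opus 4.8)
The plan is to read everything off from the local structure of the alternating strand diagram. Recall first that, since $G$ is reduced and (by the standing assumption of this subsection) has no isolated boundary vertex and no boundary vertex attached to a degree-one vertex, every strand is a simple oriented arc with both endpoints on the boundary of the disk; it therefore separates the disk into a ``left'' piece and a ``right'' piece, so ``$F$ lies to the left of strand $s$'' is an unambiguous binary relation, and likewise for the regions $R_w$ and $R_b$ of the strand diagram associated to a white vertex $w$ or a black vertex $b$. The one combinatorial fact I would use repeatedly is: \emph{if two regions of the strand diagram share a boundary arc lying on a strand $s$, then their left-strand-sets differ precisely in the element $s$} --- locally the two regions lie on opposite sides of $s$, while they lie on the same side of every other strand $s'$ because the shared arc is disjoint from $s'$. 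Finally, the equality $|S_F| = k$ for every face $F$ was already recorded in Section~\ref{sec:background} (it can alternatively be recovered from the argument below: two faces sharing an edge $\overline{bw}$ both have $w$ and $b$ on their boundary, and the analysis that follows shows the two face-sets differ by interchanging one of two fixed strands, so $|S_F|$ is constant along the connected face-adjacency graph, and one evaluates it at a face meeting the disk boundary).

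First I would treat a white vertex $w$ lying on $\partial F$. The corner of $F$ at $w$ is cut out by two edges $e,e'$ consecutive in the cyclic order at $w$, and the strand diagram contains an arc joining the midpoints of $e$ and $e'$ that runs clockwise around $w$; this arc lies on the common boundary of $R_w$ (on the $w$-side) and $R_F$ (on the side lying inside the wedge of $F$). Because the oriented strand carrying this arc turns clockwise around $w$, the vertex $w$ --- hence $R_w$ --- lies to its right, so this strand does not belong to $S_w$, while $R_F$ lies to its left, so the strand does belong to $S_F$. By the fact above, $S_F$ is obtained from $S_w$ by adjoining exactly this one strand; hence $S_w \subseteq S_F$ and $|S_w| = |S_F| - 1 = k-1$.

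The black-vertex case is the same with the orientation reversed: if $b$ lies on $\partial F$, the arc at the corner of $F$ at $b$ runs counterclockwise around $b$, so $R_b$ lies to its left and $R_F$ to its right; this strand lies in $S_b$ but not in $S_F$, so $S_F \subseteq S_b$ and $|S_b| = |S_F| + 1 = k+1$. Combining the two inclusions gives $S_w \subseteq S_F \subseteq S_b$ whenever both $w$ and $b$ lie on $\partial F$. The only genuinely delicate point is the orientation bookkeeping --- recording correctly that running clockwise around a white vertex puts the vertex on the right of the strand while running counterclockwise around a black vertex puts it on the left --- together with checking that these local pictures remain valid for external faces and degree-one boundary vertices; once the ``share an arc $\Rightarrow$ left-sets differ in one strand'' principle is set up, no further computation is required.
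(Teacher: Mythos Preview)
Your proof is correct and follows essentially the same approach as the paper's: identify the single strand separating the vertex region from the face region at the corner of $F$, then use the orientation convention (clockwise at white, counterclockwise at black) to determine which side each region lies on. The paper states this more tersely in the zigzag language, while you make the ``share an arc $\Rightarrow$ left-sets differ in one strand'' principle explicit, but the argument is the same.
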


\begin{proof}
It is standard that each $S_F$ has size $k$.  If $w$ is a white vertex of $F$ then there is a zigzag through $w$ that enters and exits along the boundary of $F$ and turns left at $w$.  The corresponding strand divides the regions corresponding to $F$ and $w$ with the region corresponding to $F$ on the left.  Therefore $S_w$ equals $S_F$ less that one strand.  In particular $S_w \subseteq S_F$ with $|S_w| = k-1$.  A similar argument applies to black vertices.
\end{proof}

We begin to construct the inverse of the boundary restriction map on $T_G$ as defined in \eqref{eq:generic}.  Fix $A \in T_G \subseteq \Pi_{\mathcal{M}}$ and in fact fix a particular matrix representative so that the columns $v_1,\ldots, v_n$ of $A$ all live in $V=\mathbb{C}^k$.  Let $H_j \subseteq V$ denote the linear span of $\{v_i : i \in I_j \setminus \{j\}\}$.  For each $w \in W$, define
\begin{equation} \label{eq:reconstruct}
L_w = \bigcap_{j \in S_w} H_j.
\end{equation}
Recall in the following that $v_j'$ denotes column $j$ of the right twist of $A$.

\begin{lemma} \ 
\begin{enumerate}
\item Each $H_j$ is a hyperplane with orthogonal complement spanned by $v_j'$.
\item The $k$ hyperplanes of the set $\{H_j: j \in S_F\}$ are in general position for each face $F$.
\item Each $L_w$ is a line.
\end{enumerate}
\end{lemma}

\begin{proof} \ 
\begin{enumerate}
\item Since $A \in T_G \subseteq \Pi_{\mathcal{M}}^{\circ}$ we know that $\Delta_{I_j}(A) \neq 0$ so the $v_i$ with $i \in I_j$ form a basis of $V$.  We know that $j \in I_j$ so $H_j$ is a span of all but one of these vectors and is hence a hyperplane.  The twist is defined in such a way that $v_j'$ is nonzero and orthogonal to each $v_i$ for $i \in I_j \setminus \{j\}$, so $v_j'$ is the orthogonal complement of $H_j$.
\item It is equivalent to say that the orthogonal vectors $v_j'$ for $j \in S_F$ form a basis of $V$.  This holds true since, by definition of $T_G$ in~\eqref{eq:generic}, $\Delta_{S_F}(A')\neq0$.
\item
By Proposition~\ref{prop:labels}, for every $w$, $|S_w| = k-1$.  So $L_w$ is an intersection in $V = \mathbb{C}^k$ of $k-1$ hyperplanes in general position and is hence a line.
\end{enumerate}
\end{proof}

\begin{proposition} \label{prop:circuit}
Let $b \in B$ and choose nonzero vectors $v_w \in L_w$ for each neighbor $w$ of $b$.  Then these $v_w$ satisfy a unique linear relation up to scale, and this relation has all coefficients nonzero.
\end{proposition}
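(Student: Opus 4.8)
The plan is to reformulate the sought relation as the kernel of an explicit cyclic linear system, using the combinatorial labels $S_w, S_b, S_F$ of Proposition \ref{prop:labels} together with the preceding lemma, which tells us that $H_j = \{x \in \mathbb{C}^k : \langle x, v_j'\rangle = 0\}$ (here $\langle\,\cdot\,,\,\cdot\,\rangle$ is the standard bilinear form), that $\{v_j' : j \in S_F\}$ is a basis of $\mathbb{C}^k$ for each face $F$, and that each $L_w$ is a line. Recall from \eqref{eq:reconstruct} that $v_w \in L_w = \bigcap_{j \in S_w} H_j$ means exactly $\langle v_w, v_j'\rangle = 0$ for all $j \in S_w$.

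First I would record the combinatorial input. Let $b$ have degree $d$ with neighbours $w_1, \ldots, w_d$ in clockwise order, and let $F_i$ be the face carrying $w_i, b, w_{i+1}$ consecutively on its boundary (subscripts of $w$ and $F$ read mod $d$). In the alternating strand diagram the region of $b$ is a $d$-gon bounded by $d$ strands, and these are pairwise distinct because $G$ is reduced; let $z_i$ be the one separating the region of $b$ from that of $F_i$, so $z_i \in S_b \setminus S_{F_i}$. Since $w_i$ lies on the boundary of both $F_{i-1}$ and $F_i$, Proposition \ref{prop:labels} gives $S_{w_i} \subseteq S_{F_{i-1}} \cap S_{F_i} \subseteq S_b$, and the cardinalities $|S_b| = k+1$, $|S_{F_i}| = k$, $|S_{w_i}| = k-1$ force $S_{F_i} = S_b \setminus \{z_i\}$ and hence, using that the $z_i$ are distinct, $S_{w_i} = S_b \setminus \{z_{i-1}, z_i\}$. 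In particular every $z_i$ lies in $S_b$; set $P = S_b \setminus \{z_1, \ldots, z_d\}$, so $P \subseteq S_{F_i}$ for each $i$, the vectors $v_j'$ with $j \in P$ are linearly independent, and their span $Q \subseteq \mathbb{C}^k$ has dimension $|S_b| - d = k+1-d$.

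Now fix nonzero $v_{w_i} \in L_{w_i}$. Because $\{v_j' : j \in S_{F_1}\}$ is a basis and $S_b = S_{F_1} \cup \{z_1\}$, the vectors $v_j'$ with $j \in S_b$ span $\mathbb{C}^k$, so for scalars $c_1, \ldots, c_d$ we have $\sum_i c_i v_{w_i} = 0$ iff $\langle \sum_i c_i v_{w_i}, v_j'\rangle = 0$ for all $j \in S_b$. For $j \in P$ this holds automatically, since $P \subseteq S_{w_i}$ for every $i$. For $j = z_m$, one has $z_m \notin S_{w_i}$ precisely when $i \in \{m, m+1\}$, so the condition becomes
\[
c_m\, a_m + c_{m+1}\, b_m = 0, \qquad a_m := \langle v_{w_m}, v_{z_m}'\rangle, \quad b_m := \langle v_{w_{m+1}}, v_{z_m}'\rangle .
\]
I would then check $a_m \neq 0 \neq b_m$: if $a_m = 0$, then $v_{w_m}$ is orthogonal to $v_j'$ for every $j$ in $S_{w_m} \cup \{z_m\} = S_b \setminus \{z_{m-1}\} = S_{F_{m-1}}$, a basis of $\mathbb{C}^k$, forcing $v_{w_m} = 0$, a contradiction; the argument for $b_m$ is identical, using $S_{w_{m+1}} \cup \{z_m\} = S_{F_{m+1}}$.

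Finally, all $v_{w_i}$ lie in the $(d-1)$-dimensional subspace $Q^\perp$, so they are linearly dependent and the cyclic system $\{c_m a_m + c_{m+1} b_m = 0\}_{m=1}^d$ has a nonzero solution; since every $a_m, b_m$ is nonzero, the recurrence $c_{m+1} = -(a_m/b_m)\,c_m$ shows that a solution is determined by $c_1$, that $c_1 \neq 0$ for any nonzero solution, and hence that the solution space is exactly one-dimensional and a generator has all coordinates nonzero — which is the assertion of the proposition. The one genuinely delicate point is the identity $S_{w_i} = S_b \setminus \{z_{i-1}, z_i\}$ with the $z_i$ pairwise distinct: this is exactly where reducedness of $G$ is needed (for a non-reduced graph two faces around $b$ could carry the same label and the cardinality count would collapse), and everything after it is linear algebra in $\mathbb{C}^k$. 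The cases $d \leq 2$ are harmless: $d = 1$ is excluded by our standing assumptions on $G$ (a nontrivial relation in a single nonzero vector is impossible), while for $d = 2$ one has $S_{w_1} = S_{w_2}$, so $v_{w_1}$ and $v_{w_2}$ span the same line and the claim is immediate.
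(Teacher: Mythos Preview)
Your argument is correct and follows essentially the same approach as the paper's proof: both exploit the combinatorial identities $S_{F_i} = S_b \setminus \{z_i\}$ and $S_{w_i} = S_b \setminus \{z_{i-1},z_i\}$, place all $v_{w_i}$ in a common $(d-1)$-dimensional space to force a relation, and then pair with each twist vector $v_{z_m}'$ to obtain the cyclic two-term constraints that pin down the coefficients up to a common scalar and show none vanish. Your write-up is slightly more explicit about the resulting linear system, but the underlying idea is identical.
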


\begin{proof}
Suppose $b$ has degree $d$ and let $j_1,\ldots,j_d$ be the numbers of the strands around $b$ in counterclockwise order.  For $i=1,\ldots d$ there is a face $F_i$ separated from $b$ by strand $j_i$.  There is an edge shared by $F_{i-1}$ and $F_i$ (indices modulo $d$) whose endpoints are $b$ and some $w_i$.  Then $w_1,\ldots, w_d$ are the neighbors of $b$ and we have
\begin{itemize}
\item $S_{F_i} = S_b \setminus \{j_i\}$,
\item $S_{w_i} = S_b \setminus \{j_{i-1},j_i\}$.
\end{itemize}

Let $S = S_b \setminus \{j_1,\ldots, j_d\}$.  Then for each $i$, $S \subseteq S_{w_i}$ so
\begin{displaymath}
v_{w_i} \in L_{w_i} \subseteq \cap_{a \in S} H_a.
\end{displaymath}
Also, $S \subseteq S_{F_1}$ so the hyperplanes in this intersection are in general position.  As $|S| = k+1 - d$ we have that $\cap_{a \in S} H_a$ has dimension $d-1$.  Therefore the $d$ vectors $v_{w_i}$ in this space must satisfy a relation.

Now suppose $c_1 v_{w_1} + \ldots + c_d v_{w_d} = 0$ is a non-trivial relation.  Note that $j_1 \in S_{w_i}$ for all $i=3,\ldots, d$, so $v_{w_i} \in H_{j_1}$ for these $i$.  On the other hand, $v_{w_1} \notin H_{j_1}$ because otherwise we would have $v_{w_1} \in H_a$ for all $a \in S_{w_1} \cup \{j_1\} = S_{F_d}$ which would imply $v_{w_1} = 0$.  A similar argument shows $v_{w_2} \notin H_{j_1}$.  Therefore, we can apply a linear functional vanishing at $H_{j_1}$ (e.g. the dot product with $v_{j_1}'$) to the above relation and precisely the first two terms survive.  It follows that $c_1$ and $c_2$ are either both zero or both nonzero and have a prescribed ratio.  The same is true by symmetry for each pair of consecutive coefficients.  We cannot have all $c_i=0$ so the $c_i$ are all nonzero and are unique up to multiplication by a common factor.
\end{proof}

\begin{proposition} \label{prop:existence}
Let $A \in T_G$.  Then there exists a unique configuration $(\v,\Rvec) \in \mathcal{C}_G$ such that $\Phi(\v,\Rvec) = A$ and $v_w \in L_w$ for all $w \in W$.  This configuration has the property that the set of vectors neighboring each black vertex is a circuit.
\end{proposition}

\begin{proof}
Let $v_j$ equal column $j$ of $A$.  First we show $v_j \in L_j$ holds for these eventual boundary vectors.  Consider the boundary face $F$ of $G$ containing the boundary segment between $j$ and $j+1$.  By \cite[Proposition 4.3]{MulSpe}, $S_F$ equals the set $\tilde{I}_j$ in the so-called reverse Grassmann necklace of $\mathcal{M}$.  The strand separating face $F$ from white vertex $j$ is in fact strand number $j$ so $S_j = \tilde{I}_j \setminus \{j\}$. Here $S_j$ is shorthand for $S_{w_j}$, where $w_j$ is the $j$th boundary white vertex. To prove $v_j \in L_j$ it is equivalent to show that $v_j$ is orthogonal to $v_i'$ for each $i \in \tilde{I}_j \setminus \{j\}$.  This fact is part of the characterization of the inverse of the right twist (also known as the left twist) provided by Muller and Speyer \cite{MulSpe}.

To extend to a configuration with the desired properties, each internal $v_w$ is determined up to scale since $L_w$ is a line.  Fixing a nonzero $v_w$ for each $w$, we get by Proposition \ref{prop:circuit} that the associated relations $R_b$ are also determined up to scale.  In short, the whole configuration is determined up to gauge at internal vertices, giving us the uniqueness.  Also by Proposition \ref{prop:circuit}, the relations have nonzero coefficients which gives us the circuit condition.

It remains to show that the vectors and relations $(\v, \Rvec)$ as above comprise a valid configuration on $G$.  The only property not clear at this point is that the Kasteleyn matrix $K$ is full rank.  As already mentioned, all coefficients $K_{bw}$ with $\overline{bw} \in E$ are nonzero.  By the general theory, there is a unique almost perfect matching of $B$ with $W \setminus I_1$ (one reference is \cite[Proposition 5.13]{MulSpe} and we also describe a construction of this matching later on).  Therefore the polynomial $\Delta_{W \setminus I_1}(K)$ of the coefficients is in fact a monomial and hence nonzero.
\end{proof}

We now have our definition of the reconstruction map $\Psi: T_G \to \mathcal{C}_G$, namely it maps $A$ to the configuration given by Proposition \ref{prop:existence}.  Clearly $\Phi \circ \Psi$ is the identity.  In plainer terms we have existence of an extension of generic $A \in \Pi_{\mathcal{M}}$ to a full configuration.  In principle, there could be other extensions with $v_w \notin L_w$ for some $w$, a possibility we rule out in the next subsection.

\begin{example}
Consider the plabic graph $G$ in Figure \ref{fig:strand}.  As discussed in Example \ref{ex:gr36}, $G$ corresponds to the uniform matroid in $Gr_{3,6}$, and it follows that $I_j = \{j,j+1,j+2\}$ with indices modulo $6$.  Given $A = [v_1 \cdots v_6]$ then, $H_i = \langle v_{i+1}, v_{i+2} \rangle$.  The unique internal white vertex $w$ has $S_w = \{3,6\}$, so 
\begin{displaymath}
L_w = H_3 \cap H_6 = \langle v_4,v_5 \rangle \cap \langle v_1, v_2 \rangle.  
\end{displaymath}
Hence our general recipe reproduces the result argued in Example \ref{ex:gr36}.
\end{example}

\subsection{Uniqueness} \label{sec:unique}
Fix $A \in T_G$.  We now know $\Phi(\Psi(A)) = A$.  On the other hand, suppose $(\v,\Rvec) \in \mathcal{C}_G$ and that $\Phi(\v,\Rvec) = A$.  We want to show $(\v,\Rvec) = \Psi(A)$ in order to establish that preimages are unique.  In light of Proposition \ref{prop:existence}, it is sufficient to show $v_w \in L_w$ for all internal white vertices $w$.  The proof is in a sense recursive, utilizing a certain acyclic orientation on $G$.

A \emph{perfect orientation} on $G$ is an orientation with the property that each internal white vertex has a unique incoming edge and each (internal) black vertex has a unique outgoing edge.  Given such an orientation, the set of edges oriented from black to white always gives an almost perfect matching.  We focus on one particular perfect orientation which we denote $\mathcal{O}$ and which is defined as follows.  Each edge of $G$ is part of two zigzags that traverse it in opposite directions.  Declare each edge to be oriented in the direction of its smaller numbered zigzag.

Let $\pi$ be the almost perfect matching associated with $\mathcal{O}$.  More directly, an edge is in $\pi$ if and only if the smaller numbered zigzag through the edge traverses it from black to white.  It is easy to see that $\pi$ is among the extremal matchings defined by Muller and Speyer \cite{MulSpe} in terms of downstream / upstream wedges.  Specifically, $\pi$ is the set of edges $e$ for which the face of $G$ containing the boundary segment from $n$ to $1$ lies in the upstream wedge of $\e$.  We stick with our characterization of $\mathcal{O}$ and $\pi$, but make use of some previously established combinatorial properties.

\begin{proposition}[{\cite[Theorem 5.3 and Corollary B.7]{MulSpe}}] \label{prop:orientation}
The orientation $\mathcal{O}$ on $G$ defined above has the following properties:
\begin{enumerate}
\item It is a perfect orientation.  
\item The corresponding matching $\pi$ uses precisely the boundary vertices $\{1,\ldots, n\} \setminus I_1$.
\item The matching uses exactly $m-1$ edges from each internal $2m$-gon face.
\item The orientation is acyclic.
\end{enumerate}
\end{proposition}

\begin{proof}
The first three parts amount to a special case of \cite[Theorem 5.3]{MulSpe}.  The last one follows quickly from the cited corollary, which states that $\pi$ is the unique almost perfect matching using its set of boundary vertices.  Indeed, suppose for the sake of contradiction that the orientation had an oriented cycle.  Half of the edges of the cycle, namely those going from black to white, appear in $\pi$.  Another matching is obtained by taking out all of these edges and including the other half of the edges of the cycle.  The result is another almost perfect matching using the same boundary vertices, a contradiction.
\end{proof}

\begin{corollary} \label{cor:orientation}
Suppose $(\v, \Rvec) \in \mathcal{C}_G$ and that $\{v_j : j \in I_1\}$ is a basis for $V$.  Then $K_{bw} \neq 0$ for each edge $\overline{bw}$ in the matching $\pi$.
\end{corollary}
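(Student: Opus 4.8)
The plan is to reduce the statement to the observation --- already contained in the proof of Proposition~\ref{prop:orientation} --- that $\pi$ is the \emph{unique} almost perfect matching of $B$ with $W\setminus I_1$, and then to read off the conclusion from Lemma~\ref{lem:determinant}. Indeed, this is exactly the mechanism already exploited in the proof of Proposition~\ref{prop:existence}, and the present corollary simply isolates it.

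First I would record the uniqueness. By parts (1) and (2) of Proposition~\ref{prop:orientation}, $\pi$ is an almost perfect matching of $B$ with $W\setminus I_1$, and by part (3) it uses exactly $m-1$ edges from every internal $2m$-gon face of $G$. A flip of an almost perfect matching at an internal $2m$-gon face is possible only when the matching restricts there to one of the two length-$2m$ all-boundary perfect matchings of that face, hence uses $m$ of its edges; since $m-1<m$, no flip of $\pi$ is available at any internal face. By the distributive lattice structure of the set of almost perfect matchings of $B$ with a fixed set of white vertices (Muller--Speyer, cited in the proof of Proposition~\ref{prop:orientation}(4)), whose cover relations are precisely such flips, $\pi$ is then simultaneously the minimum and the maximum of this lattice, so it is its unique element.

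Next I would expand a determinant. Since $\{v_j : j\in I_1\}$ is a basis of $V$, we have $\det[v_j : j\in I_1]\neq 0$, so Lemma~\ref{lem:determinant} applied with $S=I_1$ gives $\Delta_{W\setminus I_1}(K)\neq 0$. Writing this $M\times M$ determinant as $\sum_f \sgn(f)\prod_{b\in B}K_{b,f(b)}$ over bijections $f\colon B\to W\setminus I_1$, a summand can be nonzero only when $\{\overline{b\,f(b)}:b\in B\}\subseteq E$, i.e.\ when it is an almost perfect matching of $B$ with $W\setminus I_1$; by the uniqueness above this forces $f$ to correspond to $\pi$, so $\Delta_{W\setminus I_1}(K)=\pm\prod_{\overline{bw}\in\pi}K_{bw}$. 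As the left-hand side is nonzero, every factor $K_{bw}$ with $\overline{bw}\in\pi$ is nonzero, which is the claim.

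I do not expect a genuine obstacle: the content is entirely in the uniqueness of $\pi$, which is already established, and the rest is bookkeeping. The only point requiring a little care is making the ``no flip is available'' step precise --- that $\pi$ being neither covered by nor covering any matching in the lattice really does force it to be the unique almost perfect matching avoiding $I_1$ --- but this is immediate once one recalls that a minimal (resp.\ maximal) element of a finite lattice is its minimum (resp.\ maximum). Note also that no genericity on $(\v,\Rvec)$ beyond the stated basis hypothesis is used.
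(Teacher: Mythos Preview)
Your proposal is correct and follows essentially the same route as the paper: establish that $\pi$ is the unique almost perfect matching avoiding $I_1$ (the paper simply cites the proof of Proposition~\ref{prop:orientation} for this, while you spell out the lattice argument), then use Lemma~\ref{lem:determinant} (the paper uses its immediate consequence Corollary~\ref{cor:KToA}) to get $\Delta_{W\setminus I_1}(K)\neq 0$, and conclude that this determinant is a single nonzero monomial in the $K_{bw}$ along $\pi$.
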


\begin{proof}
By Corollary \ref{cor:KToA}, we know $\Delta_{W \setminus I_1}(K) \neq 0$.  As mentioned in the proof of Proposition \ref{prop:orientation}, $\pi$ is the unique matching of $W \setminus I_1$ with $B$.  As such the determinant equals (up to sign) the product of the weights $K_{bw}$ of the edges of the matching.  Therefore each such weight must be nonzero.
\end{proof}

\begin{proposition} \label{prop:hyperplane}
Suppose $(\v, \Rvec) \in \mathcal{C}_G$ and that $\{v_j : j \in I_1\}$ is a basis for $V$.  Recall $H_1$ is the span of the vectors $v_j$ for $j \in I_1 \setminus \{1\}$.  If $w$ is a white vertex and there is no oriented (relative to $\mathcal{O}$) path from boundary vertex $1$ to $w$ then $v_w \in H_1$.
\end{proposition}

\begin{proof}
Let $b$ be the black vertex such that $\overline{bw}$ is the unique edge incident to and directed towards $w$.  By Corollary \ref{cor:orientation}, $K_{bw} \neq 0$.  As $\sum_{w'} K_{bw'}v_{w'} = 0$, we have that $v_w$ lies in the span of the $v_{w'}$ for $w'$ the other neighbors of $b$.  Note that all edges $\overline{w'b}$ with $w'\neq w$ are oriented towards $b$ so there is a length $2$ path from each $w'$ to $w$.  We can apply the same argument recursively to each $w'$.  Since the orientation is acyclic the end result is that $v_w$ lies in the span of those $v_j$ with $j$ a source (i.e. $j \in I_1$) for which there exists a path from $j$ to $w$.  By assumption there is no such path from $1$ so $v_w \in H_1$ as desired.
\end{proof}

\begin{lemma}  \label{lem:zigzag}
Let $w$ be any white vertex, internal or boundary.  If $w$ lies strictly left of the zigzag starting at $1$ then there is no oriented path from $1$ to $w$ in the orientation $\mathcal{O}$.
\end{lemma}

\begin{proof}
First note that every edge of zigzag $1$ is oriented in the direction of zigzag $1$.  In other words, zigzag $1$ is an oriented path.  We claim there is no oriented edge starting weakly right of the zigzag and ending strictly left of the zigzag.  The proof is case by case depending on the start vertex of the edge.  If the edge starts strictly right of the zigzag then it must end weakly right of the zigzag (otherwise it would cross it and break planarity).  If the edge starts on the zigzag at a black vertex $b$ then it must be the unique edge oriented away from $b$.  We know that this edge is part of the zigzag so it ends on the zigzag.  Lastly, suppose our given edge starts at a white vertex $w$ on the zigzag.  The zigzag locally looks like $b,w,b'$ where $b'$ is reached by turning maximally left at $w$.  As such, all the edges incident to $w$ lie weakly to the right of the zigzag.
\end{proof}

Combining Proposition \ref{prop:hyperplane} and Lemma \ref{lem:zigzag}, we have that $v_w$ lies on $H_1$ if $w$ is strictly left of the zigzag starting at $1$.  By cyclic symmetry the statement from the previous sentence holds with $1$ replaced by any start vertex $j$ (note that to give a direct proof one would use a different perfect orientation for each $j$).  We are now ready to prove the uniqueness result.

\begin{proposition} \label{prop:uniqueness}
Let $(\v,\Rvec) \in \mathcal{C}_G$ and suppose $A = \Phi(\v) \in T_G$.  Then $(\v,\Rvec) = \Psi(A)$.
\end{proposition}

\begin{proof}
Suppose $A = \Phi(\v) \in T_G$.  Then $A$ is in the open positroid variety so the set of columns of $A$ corresponding to $I_j$ is a basis of $V$ for all $j$.  Fix $w \in W$ internal.  For each $j \in S_w$ we have the assumptions of Proposition \ref{prop:hyperplane} (with $1$ replaced by $j$), so $v_w \in H_j$.  Therefore $v_w \in L_w$.  By Proposition \ref{prop:existence}, $(\v,\Rvec)$ is in fact the same as $\Psi(A)$.  
\end{proof}

\begin{proof}[Proof of Theorem \ref{thm:plabic}]
Part 1 was proven in Proposition \ref{prop:target}.  For part 2, we showed existence of an extension of $A$ to $\psi(A)$ in Proposition \ref{prop:existence} and uniqueness of this extension in Proposition \ref{prop:uniqueness}.

We have assumed at various points that $G$ has no isolated boundary vertex and no boundary vertex attached to a degree $1$ vertex.  We briefly describe modifications needed to handle these cases.  First suppose $j$ is an isolated boundary vertex of $G$.  One can consider the strand of $j$ to be a simple arc disjoint from $G$ starting at a point clockwise from $j$ and ending at a point counterclockwise from $j$.  All definitions and proofs go through.

Now suppose $j$ is attached to a degree $1$ vertex $b$.  The strand for $j$ loops around $b$ and self-intersects before returning to the boundary, causing a few problems including in the definition of the reconstruction map.  It is consistent to have all other $S_F,S_w, S_{b'}$ omit $j$, but there is no clear definition for $S_j$ and $S_b$.  That said, $j$ is part of every almost perfect matching so it is part of no basis of $\mathcal{M}$.  Hence any $A \in \Pi_{\mathcal{M}}$ has $j$th column $v_j = 0$.  As such, we can define $\Psi(A)$ to have $v_j = 0$ and $R_b = 1v_j$, and define all other vectors and relations in a manner independent of the index $j$.  Elsewhere, the orientation we define is ambiguous regarding how to orient the edge $\overline{jb}$.  In fact it must be oriented towards $j$ to get a perfect orientation, and this choice does not cause any other issues.
\end{proof}

\section{Connection to the boundary measurement map} \label{sec:measurement}
The boundary measurement map and boundary restriction maps are both functions landing in $\Pi_{\mathcal{M}}$.  The input to the former is given by a collection of nonzero edge weights on $G$.  The input to the latter is a vector-relation configuration $\v$ on $G$.  However, we have seen (paragraph after Lemma \ref{lem:rowspace}) that $\v$ is determined up to isomorphism by the matrix $K$ whose potentially nonzero entries are in bijection with edges of $G$.  We will see that applying the boundary measurement map to a set of edge weights is the same as applying the boundary restriction map to $K$ with entries equal to the edge weights multiplied by certain signs.  This fact explains why the formulas \eqref{eq:measurement} and \eqref{eq:restriction} take the same form and only differ in the signs of the individual terms.  To rectify these equations, we introduce a version of Kasteleyn signs for plabic graphs.

\subsection{Kasteleyn signs for plabic graphs}
Let $G$ be a plabic graph as above.  By a \emph{face} of $G$, we mean a connected component of the complement of the graph in the disk in which it is embedded.  A face is \emph{internal} if its boundary is a closed walk in $G$ and \emph{external} otherwise.  The boundary of an external face is an interlacing collection of walks in $G$ and segments of the boundary of the disk.  There is a unique external face including the boundary segment of the disk from $n$ to $1$.  We call this face the \emph{infinite face} and all other faces \emph{finite faces}.  See Figure \ref{fig:external} for an example of this terminology.

\begin{figure}
\centering
\includegraphics[height=2.5in]{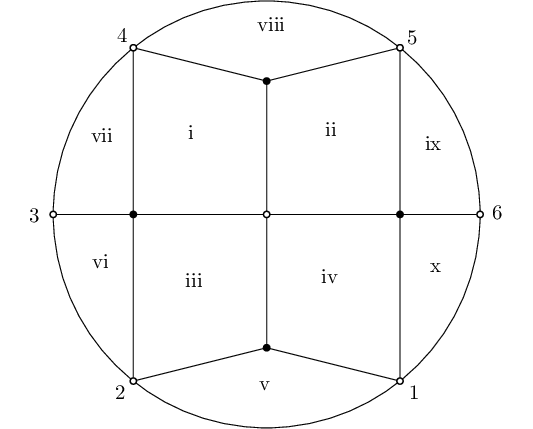}
\caption{Example illustrating the terminology for different types of faces of a plabic graph.  Here faces i through iv are internal and faces v through x are external with face x being the infinite face.}
\label{fig:external}
\end{figure}  

Let $\epsilon_{bw} = \pm 1$ for each edge $\overline{bw}$ of $G$.  Say these constitute a choice of \emph{Kasteleyn signs} if
\begin{itemize}
\item the product of the signs along the boundary of each internal $2m$-gon face is $(-1)^{m-1}$ and
\item the product of the signs along the walk(s) on the boundary of each finite external face is $(-1)^{m+a-1}$, where $a$ is the number of said walk(s) and $2m$ is the total number of edges along them.
\end{itemize}
Several notes are in order.  Each walk on the boundary of an external face starts and ends at a boundary vertex and hence has even length as we assume boundary vertices are white.  No assumption is made about the product of the signs around the infinite face.  Lastly, the most common case is that $G$ is connected.  In that event each finite external face is cut out by a single ($a=1$) path from $i$ to $i+1$ and the second condition becomes that the product of signs along this path is $(-1)^m$ where $2m$ is its length.

\begin{remark}
One reference for Kasteleyn signs on plabic graphs is a short note of Speyer \cite{Spe}.  He does not directly identify the conditions above.  Instead he defines the signs implicitly so that a certain result (along the lines of our Proposition \ref{prop:plabicSigns}) holds and then proves that such signs exist with a topological argument.
\end{remark}

\begin{proposition} \label{prop:signs}
A choice of Kasteleyn signs on $G$ exists.
\end{proposition}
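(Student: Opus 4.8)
The plan is to rephrase the existence of Kasteleyn signs as the solvability of an inhomogeneous linear system over $\mathbb{Z}/2$, and to deduce solvability by showing that the constraints coming from the finite faces — i.e. from all faces except the infinite one — are linearly independent. First I would set up notation: write $\epsilon_e = (-1)^{\sigma_e}$ with unknowns $\sigma \in (\mathbb{Z}/2)^E$, and for each face $F$ of $G$ (a connected component of the complement of $G$ in the disk $D$) let $\ell_F(\sigma) = \sum_e \mu_F(e)\sigma_e$, where $\mu_F(e) \in \{0,1\}$ is the number, reduced modulo $2$, of occurrences of the edge $e$ among the boundary walk(s) of $F$ (so a bridge traversed twice contributes $0$, and the arcs of $\partial D$, carrying no sign, never appear). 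The conditions to be met read $\ell_F(\sigma) = c_F$ for every finite face $F$, with $c_F = m-1$ for an internal $2m$-gon and $c_F = m+a-1$ for a finite external face bounded by $a$ walks carrying $2m$ edges in total. I would then record the one \emph{global} relation $\sum_F \ell_F = 0$, the sum over all faces including the infinite face: every edge of $G$ lies in the interior of $D$ and hence on exactly two faces counted with multiplicity, so its coefficient in the sum is $2 \equiv 0$.

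The heart of the proof is that this is the \emph{only} relation, i.e. that $\sum_{F \in \mathcal{S}} \ell_F = 0$ forces $\mathcal{S}$ to be empty or to be the set of all faces. Unwinding, $\sum_{F \in \mathcal{S}} \ell_F = 0$ says that for every edge $e$ incident to two distinct faces, those two faces either both lie in $\mathcal{S}$ or both lie outside it; hence faces sharing such an edge agree on membership in $\mathcal{S}$, and it remains to see that the faces of $G$ with this adjacency form a connected graph. That I would obtain by a genericity argument: any two faces have interior points in the open disk, a generic path between them meets $G$ only in edge-interiors (never along $\partial D$), and each crossing either moves to a new face across an edge incident to two distinct faces, or, at a bridge, returns to the same face — so the faces encountered along the path give the desired chain. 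Granting this, the functionals $\{\ell_F : F \neq F_\infty\}$ are linearly independent: a dependence among them extends, by giving $F_\infty$ coefficient $0$, to a dependence over all faces, whose support is then $\emptyset$ or all faces, and the latter is impossible since it would contain $F_\infty$. An inhomogeneous $\mathbb{Z}/2$-system with linearly independent equations is solvable for every right-hand side, and any solution $\sigma$ yields the required signs.

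I expect the main obstacle to be precisely this connectivity lemma and the attendant bookkeeping: bridges (incident to one face with multiplicity two), external faces whose boundary walks interleave $G$-edges with arcs of $\partial D$, and degree-one or isolated boundary vertices, for which one must be careful about what counts as a boundary walk. An alternative route, which avoids the global relation but relies on the same connectivity input, is to induct on the number of bounded faces, treating them in an order where each new face shares an edge with the already-handled region and fixing $\sigma$ on one such edge to satisfy that face's condition. In either approach, the point of leaving the infinite face unconstrained is exactly that it absorbs the single degree of freedom the global relation would otherwise obstruct.
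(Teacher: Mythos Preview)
Your argument is correct and complete. The reformulation as a $\mathbb{Z}/2$-linear system is sound, the global relation $\sum_F \ell_F = 0$ holds for the reason you give (every edge has exactly two sides, each in some face), and the connectivity of the face-adjacency graph does follow from your generic-path argument: faces are open in the closed disk and hence meet the open disk, the open disk is path-connected, and a generic path avoids vertices and $\partial D$ while crossing edges transversally. The linear independence of the finite-face functionals then follows exactly as you say.

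The paper takes a genuinely different route. Rather than solving the system directly, it closes off the disk: one adds a single black vertex $b_\infty$ adjacent to all $n$ boundary vertices, producing a bipartite graph $\tilde{G}$ embedded in a sphere whose faces biject with those of $G$. Ordinary Kasteleyn theory on a closed surface then gives signs on $\tilde{G}$ satisfying the parity condition on every face except the one corresponding to the infinite face of $G$; gauging at the boundary vertices forces all edges through $b_\infty$ to have sign $+1$; and restricting to $G$ yields exactly the desired plabic Kasteleyn signs. What this buys is brevity and a reusable construction --- the same graph $\tilde{G}$ reappears in the proof of Proposition~\ref{prop:plabicSigns}, where matchings of $G$ avoiding different boundary sets are compared by extending them to matchings of $\tilde{G}$ and flipping a cycle through $b_\infty$. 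Your approach, by contrast, is self-contained (it does not invoke Kasteleyn theory as a black box) and makes transparent \emph{why} leaving the infinite face unconstrained is exactly the right amount of slack: it is the single relation among the face functionals. Either proof is fine; if you were writing the paper and not planning to use $\tilde{G}$ elsewhere, yours would be a perfectly good replacement.
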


\begin{proof}
Extend $G$ to a new graph $\tilde{G}$ by adding a single black vertex $b_{\infty}$ and $n$ edges connecting $b_{\infty}$ to the boundary vertices $1, \ldots, n$.  Then $\tilde{G}$ can be embedded in a sphere and the faces of $\tilde{G}$ in the standard sense biject in a natural way with the faces of $G$ as defined above.  Consider the faces of $\tilde{G}$ finite or infinite as dictated by this bijection.  By ordinary Kasteleyn theory signs can be chosen on the edges of $\tilde{G}$ so that each finite $2m$-gon face has a product of signs equal to $(-1)^{m-1}$.  This property is preserved by gauge transformations wherein all signs adjacent to a given vertex are flipped.  Applying gauge as needed at vertices $1$ through $n$ we can assume that all signs of edges adjacent to $b_{\infty}$ are positive.  Restricting the signs to the subgraph $G$ yields precisely the right properties.
\end{proof}

\subsection{The translation}
We are ready to precisely state the procedure that translates between the boundary restriction and boundary measurement maps.

\begin{proposition} \label{prop:plabicSigns}
Let $\v \in \mathcal{C}_G$ and suppose that all coefficients $K_{bw}$ for $\overline{bw} \in E$ are nonzero.  Define $\wt:E \to \mathbb{C}^*$ by $\wt(\overline{bw}) = \epsilon_{bw}K_{bw}$ for a fixed choice of Kasteleyn signs.  Then $\Phi(\v)$ equals the output of the boundary measurement map applied to this weight function.
\end{proposition}

\begin{proof}
Substitute $K_{bw} = \epsilon_{bw}\wt(\overline{bw})$ into \eqref{eq:restriction}.  Let $\epsilon_J = \pm 1$ as per the sign in the front of the summation, which as previously mentioned is based on the parity of $(j_1-1) + \ldots + (j_k-k)$.  So given $J$ and a matching $\pi$ of $B$ with $W \setminus J$, the sign of the corresponding term is
\begin{equation} \label{eq:sign}
\epsilon_J \sgn(\pi)\prod_{\overline{bw} \in \pi} \epsilon_{bw}.
\end{equation}
To match \eqref{eq:measurement} we need to show all these signs are equal (it is okay if they are all negative as the Pl\"ucker coordinates are only defined up to a common multiple) as $J$ and $\pi$ vary.  For two matchings with the same $J$ this property is standard for Kasteleyn theory.  One possible reference is \cite[Theorem 2]{Ken09}, and in fact we will follow the same outline in our proof of the general case.

We will compare the signs from \eqref{eq:sign} corresponding to the pair $J_1, \pi_1$ and the pair $J_2, \pi_2$.  The disjoint union of the edges of $\pi_1$ and $\pi_2$ gives a multigraph for which each internal vertex has degree $2$ and each boundary vertex has degree $0$, $1$, or $2$.  Each component (not counting isolated boundary vertices) is a doubled edge, a cycle, or a path starting and ending at the boundary.  Each path and cycle alternates between edges of $\pi_1$ and $\pi_2$.  Starting from $\pi_1$ one can \emph{flip} along such a component by switching to the other half of the edges to obtain a matching with greater overlap with $\pi_2$.  By induction it suffices to consider the case when $\pi_1$ and $\pi_2$ are related by a single flip.  As already mentioned, the case of flipping a cycle in the graph is classical, so we focus on the path case.

Suppose $\pi_1$ and $\pi_2$ are related by a flip of a path from $i$ to $j$ with $i < j$.  Without loss of generality, $\pi_2$ contains the edge of the path incident to $i$.  It follows that $J_2 = J_1 \setminus \{i\} \cup \{j\}$.  Therefore
\begin{equation} \label{eq:kast1}
\frac{\epsilon_{J_2}}{\epsilon_{J_1}} = (-1)^{j-i}.
\end{equation}

We next consider the signs of the matchings.  To make comparison easier, pass to the graph $\tilde{G}$ from the proof of Proposition \ref{prop:signs}.  Extend the matchings to $\tilde{\pi}_1 = \pi_1 \cup \{\overline{ib_{\infty}}\}$ and $\tilde{\pi}_2 = \pi_2 \cup \{\overline{jb_{\infty}}\}$.  Both are matchings of $B \cup \{b_{\infty}\}$ with $W \setminus (J_1 \cap J_2)$.  They are related by a flip in $\tilde{G}$ about a cycle consisting of the original path from $i$ to $j$ along with the edges from $i$ and $j$ to $b_{\infty}$.  It follows that $\sgn(\tilde{\pi}_2) = (-1)^{m-1}\sgn(\tilde{\pi}_1)$ where $2m$ is the number of edges of this cycle.  Now consider $\pi_1$ as an $M \times M$ permutation matrix with columns indexed by $W \setminus J_1$.  Then $\tilde{\pi}_1$ is obtained by adding a row to the end corresponding to $b_{\infty}$, adding a column in the appropriate place corresponding to $i$, and putting a $1$ where the new row and column intersect.  The columns right of the new one are indexed by $\{i+1,\ldots, N\} \setminus J_1$, so
\begin{displaymath}
\sgn(\tilde{\pi}_1) = (-1)^{|\{i+1,\ldots, N\} \setminus J_1|} \sgn(\pi_1).
\end{displaymath}
By a similar argument
\begin{displaymath}
\sgn(\tilde{\pi}_2) = (-1)^{|\{j+1,\ldots, N\} \setminus J_2|} \sgn(\pi_2).
\end{displaymath}
Putting the pieces together
\begin{equation} \label{eq:kast2}
\frac{\sgn(\pi_2)}{\sgn(\pi_1)} = (-1)^{m-1+|\{i+1,\ldots, j\} \setminus J_1|}
\end{equation}
using the fact that $J_1$ and $J_2$ agree beyond $j$.

The last consideration is the sign coming from the weights on the edges where $\pi_1$ and $\pi_2$ differ, i.e. along the path from $i$ to $j$.  As in the previous paragraph we complete this to a cycle of length $2m$ passing through $b_{\infty}$.  This addition has no effect on signs because, as in the proof of Proposition \ref{prop:signs} we take all edges adjacent to $b_{\infty}$ to have sign $+1$.  We have a cycle in $\tilde{G}$, a graph with ordinary Kasteleyn signs, so by \cite[Lemma 1]{Ken09} the signs around it come to $(-1)^{1+m+l}$ where $l$ is the number of vertices properly inside the cycle, ``inside'' referring to the component that does not include the infinite face.  By our conventions, this inside region includes the boundary vertices $i+1,\ldots, j-1$ and not the others.  Restricted to this region, the matching $\pi_1$ includes all vertices other than $\{i+1,\ldots, j-1\} \cap J_1$.  The vertices covered by $\pi_1$ come in pairs so $l$ has the same parity as $|\{i+1,\ldots, j-1\} \cap J_1|$.  Therefore
\begin{equation} \label{eq:kast3}
\frac{\prod_{\overline{bw} \in \pi_2} \epsilon_{bw}}{\prod_{\overline{bw} \in \pi_1} \epsilon_{bw}} = (-1)^{1+m + |\{i+1,\ldots, j-1\} \cap J_1|}.
\end{equation}  

Multiplying \eqref{eq:kast1}, \eqref{eq:kast2}, and \eqref{eq:kast3}, we get that the ratio of the signs of the two terms is governed by the parity of 
\begin{displaymath}
2m + j-i+ |\{i+1,\ldots, j\} \setminus J_1| + |\{i+1,\ldots, j-1\} \cap J_1|.
\end{displaymath}
which equals $2m + j-i + j-i = 2(m+j-i)$ (using that $j \notin J_1$).  This number is even so the terms have the same sign as desired.

\end{proof}

\subsection{Geometric interpretation of edge weights}
The reconstruction map allows to construct a gauge class of vector relation configurations on a plabic graph from a suitable point in the Grassmannian.  One could then fix a representative of the gauge class and a choice of Kasteleyn signs to determine the edge weights, obtaining the preimage of the original point under the boundary measurement map.  The edge weights are not unique, but in this subsection we describe how to calculate one valid set of edge weights directly.  Our method uses an efficient recursive formulation of the boundary measurement map.  We assume for this subsection that all edge weights are positive reals.  Note that the problem of recovering the edge weights was solved previously for Le-diagrams by Talaska \cite{Tal} and in general by Muller and Speyer \cite{MulSpe} (the latter also allowing for complex weights).

Assume throughout that $G$ is oriented using the perfect orientation $\mathcal{O}$ from Section \ref{sec:unique}.  The set of sources of this orientation is $I_1$.  Suppose positive real edge weights are given on $G$.  Fix a basis $\{\tilde{v}_i : i \in I_1\}$ of $\mathbb{R}^k$.  For any non-source white vertex $w$, define $\tilde{v}_w$ as follows.  Let $b$ be its unique neighbor such that $e = \overline{bw}$ is directed towards $w$.  All other neighbors $w'$ of $b$ are such that $\overline{bw'}$ is directed towards $b$.  Let
\begin{equation} \label{eq:recursion}
\tilde{v}_{w} = \frac{1}{\wt(e)} \sum_{w'} \wt(\overline{bw'})\tilde{v}_{w'}.
\end{equation}
As the orientation is acyclic, this is a sensible recursive definition.

Now let
\begin{displaymath}
\sigma_j = (-1)^{|I_1 \cap \{1,2,\ldots,j\}|-1}.
\end{displaymath}
and $v_j = \sigma_j \tilde{v}_j$ for $j=1,\ldots, n$.  Let $A = [v_1 \cdots v_n]$.

\begin{proposition}
The point $A \in Gr_{k,n}$ agrees with the output of the boundary measurement map applied to the weighted graph.
\end{proposition}
\begin{proof}[Proof sketch]
There is a solution to the defining recurrence of the $\tilde{v}_w$ expressing each such vector as
\begin{displaymath}
\tilde{v}_w = \sum_{i \in I_1} M_{iw} v_i
\end{displaymath}
where $M_{iw}$ is the sum of weights of paths from $i$ to $w$ with respect to a certain notion of weight.  The matrix $[\tilde{v}_1 \cdots \tilde{v}_n]$ nearly agrees with Postnikov's original definition (which is made simpler in this case since our orientation is acyclic) of the boundary measurement map \cite{Pos06}.  The discrepancy is that Postnikov multiplies each entry by a sign ($(-1)^s$ in his notation where $s$ depends on $i$ and $j$).  Our approach of multiplying column $j$ by $\sigma_j$ produces the same point of the Grassmannian.  We choose to omit the details of Postnikov's original construction and of the equivalence with our choice of signs.
\end{proof}

Now, the $\tilde{v}_w$ are the vectors of a vector-relation configuration with the \eqref{eq:recursion} being the relations.  Using acyclicity again, we can apply gauges at internal vertices so that the sum of the weights of incoming edges to each internal vertex equals $1$.  In the notation above, $e$ is the unique incoming edge to $w$ so if $w$ is internal then $\wt(e) = 1$ and the coefficients in \eqref{eq:recursion} sum to $1$.  So $\tilde{v}_w$ is a convex combination of the $\tilde{v}_w'$ in this case.  This is our motivation to choose this representative of the gauge class.

\begin{example} \label{ex:convex}
Figure \ref{fig:convex} shows a plabic graph $G$ for $Gr_{3,6}$.  As $I_1 = \{1,2,3\}$ only $\sigma_2 = -1$ so we can suppress the $\sim$'s in the $\tilde{v}_j$ except for $j=2$.  Let $u$ be the vector at the interior white vertex.  Let $v_1, \tilde{v}_2, v_3$ be any basis of $\mathbb{R}^3$.  Following the arrows we construct
\begin{align*}
u &= a_1v_1 + a_2\tilde{v}_2 \\
v_4 &= \frac{1}{b_0}(b_1\tilde{v}_2 + b_2v_3 + b_3u) \\
v_5 &= \frac{1}{c_0}(c_1v_4 + c_2u) \\
v_6 &= \frac{1}{d_0}(d_1v_1 + d_2v_5 + d_3u)
\end{align*}
The output of the boundary measurement map is $[v_1 v_2 \cdots v_6]$ where $v_2 = -\tilde{v}_2$.  

\begin{figure}
\centering
\includegraphics[height=2.5in]{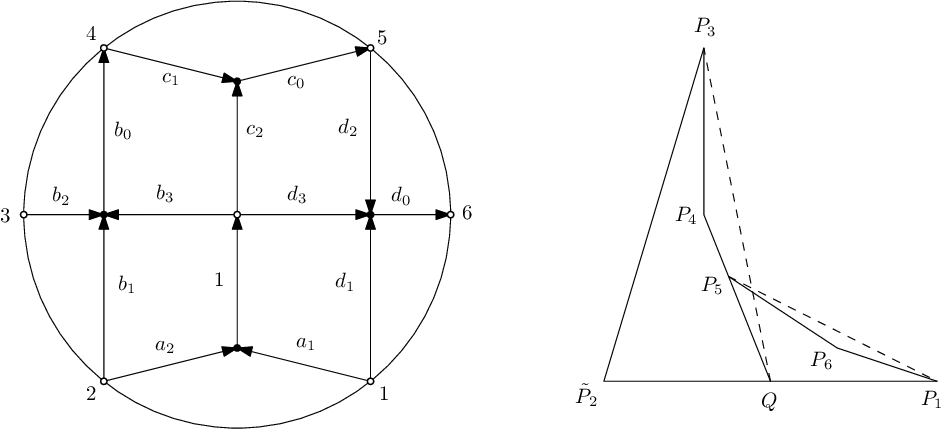}
\caption{A plabic graph with an acyclic perfect orientation (left) and a configuration that results from the associated sequence of convex combinations (right)}
\label{fig:convex}
\end{figure}

If we allow gauge at boundary vertices, which corresponds to modding out by the torus action on the Grassmannian, we can additionally arrange $b_0=c_0=d_0=1$.  Each vector is constructed as a convex combination of its predecessors with coefficients given by the edge weights.  It is easier to draw the picture in the projective plane replacing the vectors $v_1, \tilde{v}_2, \ldots v_6, u$ with points $P_1, \tilde{P}_2, \ldots, P_6, Q$ (see the right of Figure \ref{fig:convex}).  The points are constructed in the same way: start with $P_1,\tilde{P}_2,P_3$, pick $Q$ on segment $\overline{P_1\tilde{P}_2}$, pick $P_4$ in triangle $\triangle \tilde{P}_2P_3Q$, pick $P_5$ on segment $\overline{P_4Q}$, and pick $P_6$ in $\triangle P_1P_5Q$.
\end{example}

The above all concerns the forward boundary measurement map.  Going in the other direction, let $A = [v_1 \cdots v_n] \in GR_{k,n}$ be totally positive, meaning that all its Plucker coordinates are positive.  Consider the problem of reconstructing the positive edge weights.  Let $\tilde{A} = [\tilde{v}_1 \cdots \tilde{v}_n] \in \Pi_{\mathcal{M}}^{\circ}$ where $\tilde{v}_j = \sigma_j v_j$ for $\sigma_j$ as before.  Finally, let $\vtilde = \psi(\tilde{A})$ where the gauge class is chosen recursively so that each $\tilde{v}_w$ for $w$ internal is a convex combination of the $\tilde{v}_{w'}$ for $w'$ two steps upstream from $w$.

\begin{proposition}
Define edge weights on $G$ as follows.  Given $b \in B$, let $w$ be the target of the unique edge $e$ directed away from $b$.  Below let $w'$ range over the other neighbors of $b$.
\begin{itemize}
\item Suppose $w$ is internal.  Then put $\wt(e) = 1$ and let the $\wt(\overline{bw'})$ be the barycentric coordinates of $\tilde{v}_w$ with respect to the $\tilde{v}_{w'}$.
\item Suppose $w$ is on the boundary.  Then put $\wt(e) = \lambda$ where $\lambda$ is chosen so that $\lambda \tilde{v}_{w}$ is a convex combination of the $\tilde{v}_{w'}$.  Let the $\wt(\overline{bw'})$ be the associated barycentric coordinates of $\lambda \tilde{v}_{w}$.
\end{itemize}
Then, this edge weighting is a representative of the inverse of the boundary measurement map applied to $A$.
\end{proposition}

The proof is immediate as we are just undoing the boundary measurement map as described in this section.  The recipe for the edge weights is purely geometric: form and intersect some hyperplanes (as in the definition of $\Psi$), do some projections, and take some barycentric coordinates.

\begin{example}
Continue with $G$ as in Figure \ref{fig:convex} and Example \ref{ex:convex}.  Let $A = [v_1 \cdots v_6] \in Gr_{3,6}$ with all minors positive.  Consider the problem of determining the edge weights $a_1$ and $a_2$.

Let $\tilde{v}_2 = -v_2$.  As discussed in previous examples, the internal vector $u$ satisfies
\begin{displaymath}
u \in \langle v_1, v_2 \rangle \cap \langle v_4, v_5 \rangle.
\end{displaymath}
Positivity will ensure that $u$ can be scaled to be a convex combination of $v_1$ and $\tilde{v}_2$.  Then, $a_1,a_2$ are the corresponding barycentric coordinates.

Alternately, a direct formula for $a_1,a_2$ can be derived as follows.  There is a determinantal identity
\begin{displaymath}
|v_2v_4v_5|v_1 - |v_1v_4v_5|v_2 = -|v_1v_2v_5|v_4 + |v_1v_2v_4|v_5
\end{displaymath}
giving a vector on the desired line $\langle v_1, v_2 \rangle \cap \langle v_4, v_5 \rangle$.  We want a convex combination
\begin{displaymath}
a_1v_1 + a_2\tilde{v}_2 = a_1v_1 - a_2v_2
\end{displaymath}
so we scale down to get
\begin{displaymath}
a_1 = \frac{|v_2v_4v_5|}{|v_1v_4v_5| + |v_2v_4v_5|}, \quad \quad a_2 = \frac{|v_1v_4v_5|}{|v_1v_4v_5| + |v_2v_4v_5|}.
\end{displaymath}
These values are positive when the minors of $A$ are positive as expected.
\end{example}

\subsection{The circuit condition}
We conclude this section with a discussion of configurations $\v \in \mathcal{C}_G$ satisfying the circuit condition at each black vertex.

Define $\mathcal{C}^{\circ}_G \subseteq \mathcal{C}_G$ to be the set of configurations (up to gauge and $GL_k$ action) with all coefficients of all relations nonzero.  The statement of Proposition \ref{prop:plabicSigns} can be summarized by saying there is an identification of $\mathcal{C}^{\circ}_G$ with the set of gauge classes of nonzero weights on $G$ such that $\Phi|_{\mathcal{C}^{\circ}_G}$ agrees with the boundary measurement map.  We now give a slightly stronger formulation of Theorem \ref{thm:plabic}.

\begin{theorem} \label{thm:main}
Let $G$ be a reduced plabic graph.
\begin{enumerate}
\item The image of $\Phi$ is contained in $\Pi_{\mathcal{M}}$.
\item The restriction of $\Phi$ to $\mathcal{C}_G^{\circ}$ is an isomorphism with its image $T_G$.
\item $\mathcal{C}_G^{\circ} = \Phi^{-1}(T_G)$.
\end{enumerate}
\end{theorem}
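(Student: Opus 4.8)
The plan is to prove the three parts in turn, leaning on the signed Kasteleyn identity (Proposition~\ref{prop:plabicSigns}), on a universal-configuration construction inverting the passage to edge weights, and on the Muller--Speyer description of $T_G$ (Theorem~\ref{thm:mulspe}).

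\emph{Part (1).} Collect all white-vertex vectors into an $N\times k$ matrix $\mathbf{V}$, so the relations read $K\mathbf{V}=0$. Since $K$ has full rank $M=N-k$, its kernel is $k$-dimensional and hence spanned by the columns of $\mathbf{V}$; since $v_1,\dots,v_n$ span $\mathbb{C}^k$, projection to the boundary coordinates is injective on $\ker K$ and carries it onto the row space of $A$. Complementation of Pl\"ucker coordinates for the pair $\ker K=(\operatorname{rowspan}K)^{\perp}$ then gives $\Delta_J(A)=\pm c\,\det\!\big(K^{\widehat{J}}\big)$, with one nonzero constant $c$, where $K^{\widehat{J}}$ is the $M\times M$ submatrix of $K$ on all black vertices and the white vertices outside $J$. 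Once Kasteleyn signs are incorporated, $\det(K^{\widehat{J}})$ is, up to a global sign, the matching generating function appearing in~\eqref{eq:measurement} for the matchings of $G$ omitting exactly the boundary vertices in $J$; it therefore vanishes whenever $J\notin\mathcal{M}$, so $A\in\Pi_{\mathcal{M}}$. Making the signs explicit (Proposition~\ref{prop:plabicSigns}) further shows that, when every $K_{bw}\neq0$, $A$ is exactly the boundary measurement image $\mathrm{bm}(\wt)$ of the weights $\wt(\overline{bw})=\epsilon_{bw}K_{bw}$.

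\emph{Parts (2) and (3).} For $\wt\colon E\to\mathbb{C}^{*}$ with associated full-rank $K$, put $U=\operatorname{span}\{\rho_b:b\in B\}\subseteq\mathbb{C}^{N}$ with $\rho_b=\sum_w K_{bw}f_w$ on a formal basis $\{f_w\}_{w\in W}$, and let $\operatorname{Conf}(\wt)\in\mathcal{C}_G^{\circ}$ have vector at $w$ equal to the image of $f_w$ in $\mathbb{C}^{N}/U\cong\mathbb{C}^{k}$, with the $\rho_b$ as relations. A dimension count shows that every configuration on $G$ in $\mathbb{C}^k$ equals $\operatorname{Conf}$ of its own edge weights up to $GL_k$, that $\operatorname{Conf}$ intertwines gauge of $\wt$ with gauge of configurations, and by Part~(1) that $\Phi\circ\operatorname{Conf}=\mathrm{bm}$. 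Theorem~\ref{thm:mulspe}, together with the standard injectivity of $\mathrm{bm}$ modulo internal gauge for reduced $G$ (equivalently, face weights separate gauge classes), makes $\mathrm{bm}$ descend to an isomorphism of tori $(\mathbb{C}^{*})^{E}/\text{gauge}\xrightarrow{\ \sim\ }T_G$. Set $\Psi=\operatorname{Conf}\circ\mathrm{bm}^{-1}\colon T_G\to\mathcal{C}_G$. Then $\Phi\circ\Psi=\mathrm{id}_{T_G}$ is automatic, and $\Psi\circ\Phi=\mathrm{id}$ on $\mathcal{C}_G^{\circ}$ because a configuration in $\mathcal{C}_G^{\circ}$ with edge weights $\wt$ is $GL_k$-equivalent to $\operatorname{Conf}(\wt)=\Psi(\mathrm{bm}(\wt))$; granting that $\Psi$ indeed lands in $\mathcal{C}_G^{\circ}$, this proves (2) and the inclusion $\mathcal{C}_G^{\circ}\subseteq\Phi^{-1}(T_G)$. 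For the reverse inclusion, any configuration with $\Phi=A\in T_G$ is $GL_k$-equivalent to $\operatorname{Conf}(\widehat{\wt})$ for the possibly-degenerate weighting $\widehat{\wt}(\overline{bw})=\epsilon_{bw}K_{bw}$; since the torus isomorphism above is a Laurent-monomial map in the edge weights and $\mathrm{bm}$ is injective modulo gauge, every edge occurs with nonzero exponent in some coordinate (the degree-one black vertices being handled already by the full-rank hypothesis), so a vanishing weight would force a torus coordinate of $A$ to vanish or blow up, contradicting $A\in T_G$. Hence $\widehat{\wt}$ has no zeros and the configuration lies in $\mathcal{C}_G^{\circ}$.

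\emph{The main obstacle.} The crux is the non-degeneracy needed above: that $\operatorname{Conf}(\wt)$ is a genuine configuration for every $\wt\in(\mathbb{C}^{*})^{E}$ on a reduced $G$ --- i.e. no internal $v_w$ is zero (equivalently, no $f_w$ with $w$ internal lies in $\operatorname{rowspan}K$), and $v_1,\dots,v_n$ span $\mathbb{C}^k$ (equivalently, no nonzero element of $\ker K$ is supported on internal white vertices, which is where $k\le n$ and reducedness enter). This is precisely where reducedness is used, and I expect to prove it by a zigzag argument in the spirit of Fock~\cite{F15}: realize the line $\langle v_w\rangle$ as the intersection of hyperplanes attached to the zigzags through $w$, built from the twisted boundary vectors, and check that reducedness forces this intersection to be a single nonzero line. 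The same type of statement --- each edge of a reduced plabic graph being genuinely recorded by the zigzag data --- underlies the reverse inclusion in (3); packaging these facts also yields the promised intrinsic (circuit-type) description of $\mathcal{C}_G^{\circ}$. Checking that $\Phi$ and $\Psi$ are mutually inverse \emph{morphisms} of varieties is then routine.
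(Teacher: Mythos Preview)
Your Part~(1) is the paper's argument: pass to the Kasteleyn matrix, use complementary Pl\"ucker coordinates (Lemma~\ref{lem:determinant}, Corollary~\ref{cor:KToA}), and observe that $\Delta_J(A)$ is a signed sum over almost perfect matchings avoiding $J$, hence vanishes for $J\notin\mathcal{M}$.

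For Parts~(2)--(3) you take a genuinely different route. The paper constructs $\Psi$ \emph{explicitly}: for each white vertex $w$ it defines a line $L_w=\bigcap_{j\in S_w}H_j$ as an intersection of hyperplanes indexed by zigzags through $w$ (equation~\eqref{eq:reconstruct}), proves this is a line using the Muller--Speyer twist, and shows the resulting vectors form circuits around each black vertex (Proposition~\ref{prop:circuit}). Uniqueness is proven via a specific acyclic perfect orientation (Proposition~\ref{prop:orientation}) and a recursive containment argument (Proposition~\ref{prop:hyperplane}). You instead set $\Psi=\operatorname{Conf}\circ\mathrm{bm}^{-1}$ and appeal to the Muller--Speyer torus isomorphism as a black box. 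Your approach is more conceptual and shorter on paper; the paper's buys an explicit geometric formula for the reconstructed vectors, yields the circuit characterization of $\mathcal{C}_G^\circ$ as a byproduct, and is largely self-contained.

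However, your ``main obstacle'' is a genuine gap, and the machinery you would need to close it is essentially what the paper builds anyway. Showing that $\operatorname{Conf}(\wt)$ has every internal $v_w\neq 0$ for \emph{arbitrary} $\wt\in(\mathbb{C}^*)^E$ is not formal: over $\mathbb{C}$ the matching sums can cancel, so you cannot argue by genericity. The paper handles this by never needing it---its $\Psi(A)$ places $v_w$ on a one-dimensional $L_w$ by construction---whereas your $\operatorname{Conf}$ does need it, and the zigzag argument you gesture at is precisely the $L_w$ construction. Similarly, your Part~(3) argument is shaky: you want to say that a zero edge weight forces a face weight to be $0$ or $\infty$, contradicting $A\in T_G$; but if some $K_{bw}=0$ the face weight~\eqref{eq:face} is not even defined, so the comparison breaks down before it starts. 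The paper's argument is cleaner and avoids this circularity: since $A\in T_G\subseteq\Pi_\mathcal{M}^\circ$, each necklace minor $\Delta_{I_j}(A)\neq 0$; the corresponding $\Delta_{W\setminus I_j}(K)$ is a \emph{monomial} (there is a unique extremal matching), so every edge of that matching has $K_{bw}\neq 0$; and every edge of a reduced $G$ lies in some extremal matching as $j$ varies. You could salvage your approach by importing exactly this extremal-matching fact, but at that point you are using the paper's key lemma.
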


\begin{proof}
The first part is a restatement of the first part of Theorem \ref{thm:plabic}.  From the second part of Theorem \ref{thm:plabic} we have that each $A \in T_G$ has a unique preimage under $\Phi$.  More precisely, we know by Proposition \ref{prop:existence} that the preimage $\Psi(A)$ has nonzero coefficients in all its relations, i,e, $\Psi(A) \in \mathcal{C}_G^{\circ}$.  All that remains for both the second and third parts of the current theorem is to show that $\Phi (\mathcal{C}_G^{\circ}) \subseteq T_G$.  As $\Phi$ has the same image as the boundary measurement map, the previous follows from Theorem \ref{thm:mulspe}.
\end{proof}

\begin{proposition}
Let $(\v,\Rvec) \in \mathcal{C}_G$.  Then $(\v, \Rvec) \in \mathcal{C}_G^{\circ}$ if and only if the $v_w$ neighboring each $b \in B$ are a circuit.
\end{proposition}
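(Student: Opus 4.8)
The plan is to prove the two implications separately, treating the forward implication as an immediate consequence of the definition of a circuit and the reverse implication as a repackaging of work already carried out in Propositions~\ref{prop:circuit}, \ref{prop:existence}, and \ref{prop:uniqueness}.

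First I would do the easy direction: if the vectors $\{v_w : w \in N(b)\}$ neighboring each $b \in B$ form a circuit, then $(\v,\Rvec) \in \mathcal{C}_G^{\circ}$. Fix $b$. The relation $R_b$ is, by the definition of a configuration, a non-trivial linear relation among these vectors, hence has at least one nonzero coefficient. If some coefficient $K_{bw_0}$ with $w_0 \in N(b)$ were zero, then $R_b$ would be a non-trivial relation among the proper subset $\{v_w : w \in N(b) \setminus \{w_0\}\}$, contradicting linear independence of that subset, which is part of the circuit hypothesis. Hence $K_{bw} \neq 0$ for every edge $\overline{bw}$ at $b$; since this holds for all $b$, the configuration lies in $\mathcal{C}_G^{\circ}$.

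For the converse, I would invoke the identification $\mathcal{C}_G^{\circ} = \Psi(T_G)$ recorded just above the statement, which follows from Proposition~\ref{prop:uniqueness} (so that $\Psi \circ \Phi$ is the identity on $\mathcal{C}_G^{\circ}$), the fact that $\Phi \circ \Psi$ is the identity on $T_G$, and Proposition~\ref{prop:plabicSigns} (which shows $\Phi$ restricted to $\mathcal{C}_G^{\circ}$ has image exactly $T_G$). Thus, given $(\v,\Rvec) \in \mathcal{C}_G^{\circ}$, setting $A = \Phi(\v,\Rvec) \in T_G$ we obtain $(\v,\Rvec) = \Psi(A)$. But Proposition~\ref{prop:existence} states precisely that the configuration $\Psi(A)$ has the property that the vectors neighboring each black vertex form a circuit, which is what we want.

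I do not expect a genuine obstacle here: all the substance of the reverse direction is already encapsulated in Proposition~\ref{prop:existence}, which itself rests on the local analysis of Proposition~\ref{prop:circuit}. The one bookkeeping point to settle is the temporary hypothesis made earlier that $G$ has no isolated boundary vertex and no boundary vertex adjacent to a degree-one vertex: if that assumption is still in force when this proposition is stated, one should either note that it extends to the general reduced case by the same degree-two reduction used elsewhere, or simply carry the restriction along, since it is harmless for the intended application.
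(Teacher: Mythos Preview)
Your proposal is correct and follows essentially the same argument as the paper: the easy direction is the observation that a non-trivial relation on a circuit must have all coefficients nonzero, and the hard direction uses that $\mathcal{C}_G^{\circ}$ equals the image of $\Psi$ (stated just before the proposition) together with the circuit conclusion of Proposition~\ref{prop:existence}. The paper's proof is slightly terser but identical in content.
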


\begin{proof}
Any non-trivial linear relation on the elements of a circuit must have nonzero coefficients, so the if direction is easy.  On the other hand, if $\v \in \mathcal{C}_G^{\circ}$, then by Theorem \ref{thm:main} we have $\Phi(v) \in T_G$ so $\v = \Psi(\Phi(\v))$ satisfies the circuit condition by Proposition \ref{prop:existence}.
\end{proof}

\section{Structure of the space $\mathcal{C}_G$} \label{sec:structure}
Let $G$ be a plabic graph with all of the conventions and notation of Section \ref{sec:plabic}. We have defined $\mathcal{C}_G$ as the set of vector-relation configurations on $G$ modulo gauge transformations at internal vertices and the action of $GL_k(\mathbb{C})$.  We now consider the algebraic-geometric structure both of $\mathcal{C}_G$ and of the function $\Phi: \mathcal{C}_G \to \Pi_{\mathcal{M}}$.  We know $T_G \subseteq \Pi_{\mathcal{M}}$ is dense, and by Theorem \ref{thm:main} the difference of these sets is mapped to by $\mathcal{C}_G \setminus \mathcal{C}_G^{\circ}$.  The main result of this Section is that $\mathcal{C}_G$ is a smooth algebraic variety.  Unfortunately, the map $\Phi$ is not always surjective, but we will see that it does resolve singularities of the positroid variety in some cases.

First, consider a configuration $\v \in \mathcal{C}_G$.  We will describe explicitly a bijection between a neighborhood of $\v$ and an open subset of an affine variety.  Since the boundary vectors $v_1,\ldots, v_n$ span $V$, there must be a basis $\{v_j : j \in I\}$ among them.  Acting by $GL_k$ we can arrange for this to equal the standard basis in order.  Next, each internal vector $v_w$ is nonzero so we can pick one of its nonzero entries and apply a gauge so that the entry equals $1$.  Finally, by Lemma \ref{lem:determinant} we know that 
\begin{displaymath}
\Delta_{W \setminus I}(K) \neq 0.
\end{displaymath}
It follows that there is an almost perfect matching of $B$ with $W \setminus I$ with all $K_{bw}$ along the matching nonzero.  Apply gauge at the black vertices to scale all these $K_{bw}$ to $1$.

We have exhausted the allowable operations, so the collection of remaining variables gives a well-defined map to affine space.  Specifically, the coordinates are the entries of the boundary vectors $v_j$ with $j \notin I$, each entry of each internal vector $v_w$ except the one scaled to $1$, and all the $K_{bw}$ for edges $\overline{bw}$ not in the matching.  The map to affine space is injective and it is easy to describe the image.  For each $b\in B$ the vector relation $\sum_w K_{bw}v_w = 0$ amounts to $k$ quadratic relations in the variables.  The only other condition is that the matrix $K$ has full rank.  Restricting the chart a bit, we can replace the full-rank condition with the single inequality $\Delta_{W \setminus I}(K) \neq 0$ which as already mentioned holds for $\v$.

\begin{example} \label{ex:gr24}
The graph $G$ in Figure \ref{fig:gr24} is one of the standard plabic graphs for the open cell in $Gr_{2,4}$.  Suppose a point $\v \in \mathcal{C}_G$ is given such that $\{v_1,v_3\}$ is a basis of $\mathbb{C}^2$, $v_2$ appears non-trivially in the relation on $v_2,v_3,v_4$, and $v_4$ appears non-trivially in the relation on $v_1,v_2,v_4$.  The normalization described above produces the configuration in the figure where the edge variables indicate the coefficients of the relations.  From the vector relations $v_2 + av_3 + bv_4 = 0$ and $cv_1 + dv_2 + v_4 = 0$ we obtain the system
\begin{align*}
x_2 + bx_4 &= 0 \\
y_2 + a + by_4 &= 0 \\
c + dx_2 + x_4 &= 0 \\
dy_2 + y_4 &= 0
\end{align*}
The Kasteleyn matrix is
\begin{displaymath}
\left[\begin{array}{cccc} 0 & 1 & a & b \\ c & d & 0 & 1\\ \end{array}\right].
\end{displaymath}
Taking the columns not in our basis we want $\Delta_{24}(K) = 1-bd \neq 0$.  The image of the chart is defined in affine space $\mathbb{C}^8$ by the four equations above and this one inequality.

\begin{figure}
\centering
\includegraphics[height=2in]{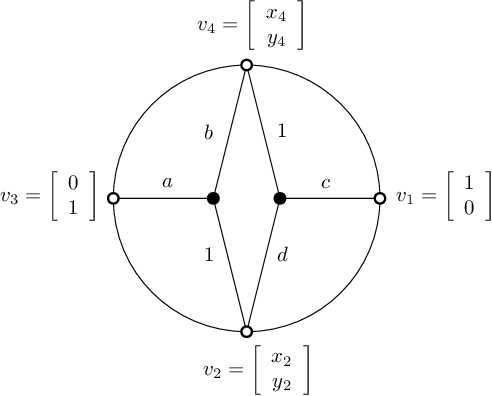}
\caption{One chart on $\mathcal{C}_G$ with $G$ a plabic graph corresponding to $Gr_{2,4}$.}
\label{fig:gr24}
\end{figure}

In fact, a more efficient chart is obtained by taking just $a,b,c,d$ as coordinates.  The other variables can be reconstructed as
\begin{align*}
&x_2 = \frac{bc}{1-bd} \quad \quad \ \  x_4 = -\frac{c}{1-bd} \\
&y_2 = -\frac{a}{1-bd} \quad \quad y_4 = \frac{ad}{1-bd}
\end{align*}
and as before we assume $1-bd \neq 0$.  The image of the chart is an open subset of $\mathbb{C}^4$, so in particular it is smooth.
\end{example}

\begin{theorem} \label{thm:smooth}
The space $\mathcal{C}_G$ of configurations on any reduced plabic graph $G$ is smooth.
\end{theorem}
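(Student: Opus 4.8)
The plan is to cover $\mathcal{C}_G$ by charts each isomorphic to an open subscheme of affine space, refining the chart construction above and following the pattern of Example~\ref{ex:gr24}. For a $k$-element subset $I\subseteq\{1,\dots,n\}$, let $\mathcal{C}_G^{(I)}\subseteq\mathcal{C}_G$ be the open locus where the boundary vectors indexed by $I$ are linearly independent; since the boundary vectors always span $V$, these loci cover $\mathcal{C}_G$. Using the $GL_k(\C)$-action one moves the frame $(v_j)_{j\in I}$ to the standard one, uniquely, which identifies $\mathcal{C}_G^{(I)}$ with $Y_I/\mathbb{G}$, where $Y_I$ is the space of configurations with $v_j=e_j$ for all $j\in I$ and $\mathbb{G}$ is the group of gauge transformations at the black vertices and internal white vertices, a torus of rank $|B|+|W|-n$ (it commutes with $GL_k(\C)$ and preserves $Y_I$).

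I would first show that $Y_I$ is smooth. By Lemma~\ref{lem:determinant} applied to $S=I$, the determinant $\det[e_j:j\in I]=\pm1$ equals a nonzero scalar times $\Delta_{W\setminus I}(K)$, so $\Delta_{W\setminus I}(K)\neq0$ on $Y_I$; conversely, whenever the $M\times M$ submatrix of $K$ on the columns $W\setminus I$ is invertible, the relations $\sum_w K_{bw}v_w=0$ ($b\in B$) determine the vectors $(v_w)_{w\in W\setminus I}$ uniquely, as regular functions of the entries of $K$. The remaining conditions defining a configuration are open (nonvanishing of the internal vectors; full rank of $K$ and nonvanishing of every row of $K$ are automatic once $\Delta_{W\setminus I}(K)\neq0$), so $(\v,\Rvec)\mapsto K$ is an isomorphism of $Y_I$ onto an open subset of $\C^{E}$, hence $Y_I$ is smooth of dimension $|E|$. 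Next I would verify that $\mathbb{G}$ acts freely on $Y_I$: if $((\mu_b),(\lambda_w))$ fixes a configuration, then $v_w\mapsto v_w/\lambda_w$ together with $v_w\neq0$ for internal $w$ forces $\lambda_w=1$, and then $K_{bw}\mapsto\mu_bK_{bw}$ together with the nonvanishing of each row of $K$ forces $\mu_b=1$; so all stabilizers are trivial.

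Since $\mathbb{G}$ is a torus, hence a special group, a free $\mathbb{G}$-action on the quasi-affine variety $Y_I$ makes $Y_I\to Y_I/\mathbb{G}$ a Zariski-locally trivial $\mathbb{G}$-torsor, so $\mathcal{C}_G^{(I)}=Y_I/\mathbb{G}$ is smooth, of dimension $|E|-|B|-|W|+n$ (which agrees with $\dim T_G$, as it must by Theorem~\ref{thm:main}); taking the union over $I$ yields the theorem. The step I expect to be the main obstacle is making this quotient rigorous, i.e.\ checking that the set-theoretic quotient $Y_I/\mathbb{G}$ carries the variety structure produced by the explicit chart construction above; the most hands-on way is to imitate Example~\ref{ex:gr24} in general, first eliminating all vector coordinates by solving the relations and then using the acyclic perfect orientation of Proposition~\ref{prop:orientation} (for $I=I_1$ and its cyclic rotations, with the help of Corollary~\ref{cor:orientation}) to order the internal white vertices so that each internal normalization equation is solvable for one edge weight at that vertex with regular inverse, presenting the chart as an open subset of $\C^{\,|E|-|B|-|W|+n}$; this is also where reducedness of $G$ enters. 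Finally, the corner cases of an isolated boundary vertex, or a boundary vertex attached to a degree-one vertex, are handled by the same modifications as at the end of the proof of Theorem~\ref{thm:main}.
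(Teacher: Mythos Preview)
Your approach is correct in spirit and genuinely different from the paper's.  The paper does not argue via a free torus quotient at all; instead it introduces the notion of a \emph{system} $F\subseteq G$ (a spanning forest each of whose components touches exactly one boundary vertex and whose black vertices have degree at most~$2$), shows that normalizing $K_{bw}=1$ along the edges of $F$ gives a chart $\phi_F$ onto an open subset of $\mathbb{C}^{|E\setminus E'|}$, and then proves that every $\v\in\mathcal{C}_G$ lies in the domain of some $\phi_F$ by an explicit edge-adding algorithm starting from an almost perfect matching.  So the paper produces, for each point, an affine chart with no defining equations, whereas you deduce smoothness formally from $Y_I$ being open in $\mathbb{C}^E$ together with freeness of the gauge action.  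What your argument buys is conceptual economy; what the paper's buys is explicit coordinate charts, which it then uses e.g.\ in the blow-up example at the end of the section.

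There is one wrinkle in your write-up.  To make the quotient $Y_I/\mathbb{G}$ rigorous you propose to use the acyclic perfect orientation of Proposition~\ref{prop:orientation}, but that construction is only available for $I=I_j$ in the Grassmann necklace, and the loci $\mathcal{C}_G^{(I_j)}$ need not cover $\mathcal{C}_G$ (points with $\Phi(\v)\notin\Pi_{\mathcal{M}}^{\circ}$ may have all $\Delta_{I_j}$ vanishing).  The clean fix is the one you allude to first: the preliminary charts at the start of Section~\ref{sec:structure} (choose a nonzero entry of each internal $v_w$ and an almost perfect matching with nonzero edge coefficients) already give Zariski-local sections of the $\mathbb{G}$-action on $Y_I$ for \emph{every} $I$ with $\mathcal{C}_G^{(I)}\neq\varnothing$, so $Y_I\to\mathcal{C}_G^{(I)}$ is a Zariski-locally trivial $\mathbb{G}$-torsor and smoothness follows.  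Incidentally, neither this argument nor the paper's system construction actually uses reducedness of $G$; your remark that ``this is also where reducedness enters'' is misplaced.
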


The images of the charts defined above have lots of defining equations which make analysis somewhat difficult.  Generalizing Example \ref{ex:gr24}, we introduce more intricate charts which have the advantage of landing in open subsets of affine space.  The atlas that results is indexed by certain subgraphs of $G$.

Say a subgraph $F = (B \cup W, E')$ of $G = (B \cup W, E)$ is a \emph{system} in $G$ if
\begin{itemize}
\item $F$ is a forest,
\item each component of $F$ includes exactly one boundary vertex of $G$, and
\item each component of $F$ either contains exactly one edge or has the property that all of its black vertices have degree $2$.
\end{itemize}
We choose the name because $F$ has the appearance of a system of rivers connecting various points on an island to the surrounding ocean.

\begin{proposition} \label{prop:systemGauge}
Let $F = (B \cup W, E')$ be a system in $G$.  Suppose a configuration $\v \in \mathcal{C}_G$ has the property that $K_{bw} \neq 0$ for all $\overline{bw} \in E'$.  Then there is a unique representative of the gauge class of $\v$ so that $K_{bw} = 1$ for all $\overline{bw} \in E'$.  
\end{proposition}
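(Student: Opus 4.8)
The plan is to translate the statement into a triangular system of equations along the trees of $F$. Recall from Definition \ref{def:gauge} how gauge transformations act on the coefficient matrix $K=(K_{bw})$: gauging at a black vertex $b$ by $\mu\neq 0$ rescales the whole relation $R_b$, i.e. replaces $K_{bw'}$ by $\mu K_{bw'}$ for every neighbor $w'$ of $b$; gauging at an internal white vertex $w$ by $\lambda\neq 0$ replaces $K_{b'w}$ by $\lambda K_{b'w}$ for every neighbor $b'$ of $w$. In a plabic graph every black vertex is internal and no boundary white vertex may be gauged, so a gauge transformation of $G$ amounts to a choice of $\mu_b\in\mathbb{C}^*$ for each $b\in B$ and $\lambda_w\in\mathbb{C}^*$ for each internal white $w$, acting by $K_{bw}\mapsto \mu_b\lambda_w K_{bw}$, with the convention $\lambda_w=1$ for boundary $w$. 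Thus normalizing the coefficients along $E'$ to $1$ is exactly the problem of solving $\mu_b\lambda_w=K_{bw}^{-1}$ for every edge $\overline{bw}\in E'$, and uniqueness of the normalized representative is the assertion that the only gauge leaving all these coefficients equal to $1$ is the trivial one.

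First I would record the structural input. Since $F$ is a spanning forest whose components are in bijection with the $n$ boundary vertices of $G$, every internal vertex of $G$ lies in a component of $F$ that also contains a (necessarily unique) boundary vertex; such a component is a tree with at least two vertices, hence with at least one edge, and contains no other boundary vertex. In particular no internal vertex of $G$ is isolated in $F$. I would then fix such a tree component $T$ and root it at its boundary vertex $r$, which is white and carries the pinned value $\lambda_r=1$.

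The core step is a propagation argument along $T$ by distance from $r$. If $x$ is a vertex of $T$ at distance $\ell\geq 1$ and $\overline{xp}$ is the edge to its parent $p$ (at distance $\ell-1$), then the requirement that the coefficient on $\overline{xp}$ equal $1$ is a single equation, linear in the gauge scalar at $x$, whose coefficient is $K_{xp}$ times the already-determined nonzero gauge scalar at $p$; since $K_{xp}\neq 0$ by hypothesis, this determines the gauge scalar at $x$ uniquely as a nonzero number. Because $T$ is a tree, each non-root vertex is the endpoint of exactly one edge toward $r$, so the induction visits every vertex of $T$ once and terminates, and any competing gauge normalizing all $E'$-coefficients must obey the same recursion, hence coincides with this one. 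Carrying this out on every component of $F$ with an edge, and leaving the gauge trivial elsewhere (which by the previous paragraph affects no internal vertex), produces a gauge transformation of $\v$ whose output $\v'$ has $K_{bw}=1$ for all $\overline{bw}\in E'$, and $\v'$ is forced. Gauge transformations preserve the conditions defining $\mathcal{C}_G$ (nonvanishing of internal vectors, spanning of the boundary vectors, full rank of $K$), so $\v'$ is again a point of $\mathcal{C}_G$.

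I do not anticipate a real obstacle: the argument is the observation that normalizing a multiplicative ``potential'' to $1$ along a forest is uniquely solvable from the leaves toward a pinned root. The two points needing care are getting the direction of rescaling right for white versus black gauges, and invoking the correct axioms of a system — ``forest'' is what makes the recursion unambiguous and terminating, while ``exactly one boundary vertex per component'' supplies, in each component, the one vertex whose gauge is not free and thereby rigidifies the solution. (The third axiom, on degrees of black vertices, plays no role here.)
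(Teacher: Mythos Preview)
Your proof is correct and essentially the same as the paper's: both root each tree component at its unique boundary vertex (where the gauge scalar is pinned) and propagate the gauge scalars inward along the tree, using the forest structure to ensure each internal vertex is visited exactly once and its scalar is forced by the edge toward the root. Your observation that the third axiom of a system is not needed here is also accurate.
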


\begin{proof}
There is a unique simple path in $F$ from each internal vertex to the boundary.  Define a partial order on the set of internal vertices via $a \preceq a'$ if $a$ lies on the path from $a'$ to the boundary.  Go through the internal vertices in a manner consistent with this order.  At each $a$ apply gauge to set equal to $1$ the coefficient at the first edge on the path from $a$ to the boundary.  Each $K_{bw}$ will be set to $1$ after the gauge at whichever of $b$ or $w$ is larger in the order, and it will remain unchanged thereafter.  It is easy to see that all choices for this gauge were forced, so the outcome is unique.
\end{proof}

We now have a rational map $\phi_F: \mathcal{C}_G \to \mathbb{C}^{|E \setminus E'|}$.  The map takes as input a configuration $\v$, performs the gauge described in Proposition \ref{prop:systemGauge}, and outputs the remaining coefficients $K_{bw}$ for $\overline{bw} \notin E'$.  On the other hand, given a point in $c \in \mathbb{C}^{|E \setminus E'|}$ we can construct a matrix $K$ by setting 
\begin{displaymath}
K_{bw} = \begin{cases}
1, & \textrm{if } \overline{bw} \in E' \\
c_{bw}, & \textrm{if } \overline{bw} \in E \setminus E' \\
0, & \textrm{otherwise} 
\end{cases}
\end{displaymath}
As explained in the paragraph following Lemma \ref{lem:rowspace}, an element of $\mathcal{C}_G$ is determined by its Kasteleyn matrix so we can recover $\v$.  Therefore $\phi_F$ is injective.

\begin{proposition} \label{prop:chartDense}
The image of $\phi_F$ is dense in $\mathbb{C}^{|E \setminus E'|}$.
\end{proposition}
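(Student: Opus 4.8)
The plan is to identify the image of $\phi_F$ with an explicit Zariski open subset of $\mathbb{C}^{|E\setminus E'|}$ and then exhibit a single point in it. Recall that for $c\in\mathbb{C}^{|E\setminus E'|}$ the recipe preceding the proposition produces a matrix $K=K(c)$ with $K_{bw}=1$ for $\overline{bw}\in E'$, $K_{bw}=c_{bw}$ for $\overline{bw}\in E\setminus E'$, and $K_{bw}=0$ otherwise. First I would show that $c$ lies in the image of $\phi_F$ precisely when $K(c)$ defines a configuration, that is, when (i) $K(c)$ has rank $M$; (ii) $e_w\notin\row(K(c))$ for every internal white vertex $w$; and (iii) $\langle e_1,\dots,e_n\rangle+\row(K(c))=\mathbb{C}^N$. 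The ``only if'' direction is immediate from the normalization: if $c=\phi_F(\v)$ then, after the gauge supplied by Proposition \ref{prop:systemGauge}, the Kasteleyn matrix of $\v$ is exactly $K(c)$, and since this gauged configuration still lies in $\mathcal{C}_G$, conditions (i)--(iii) hold by Lemma \ref{lem:rowspace} and the definition of $\mathcal{C}_G$ (note (i) in particular forces every row of $K(c)$, hence every relation $R_b$, to be nonzero). For the ``if'' direction, given (i)--(iii) set $V=\mathbb{C}^N/\row(K(c))$, which has dimension $N-M=k$ by (i); let $v_w$ be the image of the coordinate vector $e_w$ and let $R_b$ be $\sum_w K_{bw}v_w=0$, which holds because row $b$ of $K(c)$ lies in $\row(K(c))$. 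Then (ii) says the internal vectors are nonzero and (iii) says $v_1,\dots,v_n$ span $V$, so this is an element of $\mathcal{C}_G$; since its Kasteleyn matrix already equals $1$ on $E'$, the gauge of Proposition \ref{prop:systemGauge} is trivial on it and $\phi_F$ returns $c$.

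Next I would note that each of (i), (ii), (iii) asserts the nonvanishing of at least one polynomial in the entries of $c$: a maximal minor of $K(c)$ for (i); an $(M+1)\times(M+1)$ minor of the matrix $\left[\begin{smallmatrix}K(c)\\ e_w\end{smallmatrix}\right]$ for (ii); and an $N\times N$ minor of the $(n+M)\times N$ matrix obtained by stacking the rows $e_1,\dots,e_n$ on top of $K(c)$ for (iii). Consequently the image of $\phi_F$, being the (finite) intersection of such ``some minor is nonzero'' loci, is a Zariski open subset of the irreducible affine space $\mathbb{C}^{|E\setminus E'|}$.

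It then remains only to check that this open set is nonempty. Running the boundary measurement map on $G$ with all edge weights equal to $1$ produces a point $A\in T_G$, so $T_G\neq\emptyset$; by Proposition \ref{prop:existence} the reconstruction $\Psi(A)$ is an element of $\mathcal{C}_G^{\circ}$, and any such configuration has $K_{bw}\neq 0$ for every $\overline{bw}\in E\supseteq E'$, so $\phi_F$ is defined on it and its value lies in the image. A nonempty Zariski open subset of $\mathbb{C}^{|E\setminus E'|}$ is dense, which proves the proposition. The only step needing genuine care is the identification of the image with the locus (i)--(iii), and within that the compatibility of the gauge normalization in both directions; once this is secured, openness is automatic and nonemptiness is inherited from the already-established nonemptiness of $\mathcal{C}_G^{\circ}$.
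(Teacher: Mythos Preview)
Your proof is correct and follows essentially the same approach as the paper: both identify the image of $\phi_F$ with the Zariski open locus in $\mathbb{C}^{|E\setminus E'|}$ where the matrix $K(c)$ satisfies the three defining conditions of $\mathcal{C}_G$ (full rank, internal vectors nonzero, boundary vectors spanning), using Lemma~\ref{lem:rowspace} to rebuild the configuration from $K(c)$. The only difference is that the paper leaves nonemptiness of this open set implicit, whereas you supply an explicit witness via $\Psi(A)\in\mathcal{C}_G^{\circ}$; this is a harmless (and arguably more careful) addition, though a more direct route---not requiring the reconstruction map or reducedness---is to observe that the very next proposition in the paper shows the origin $0$ lies in the image.
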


\begin{proof}
Given $c \in \mathbb{C}^{|E \setminus E'|}$ we can construct $K$ as above and then use Lemma \ref{lem:rowspace} to build a configuration $\v$ in $\mathbb{C}^N / \row(K)$.  The only difficulty would be if $\v$ violated one of the conditions in the definition of $\mathcal{C}_G$, each of which is easily seen to be stated in terms of an inequality.  Specifically we need
\begin{itemize}
\item $K$ is full rank, i.e. some $\Delta_S(K) \neq 0$,
\item the boundary vectors $v_1,\ldots, v_n$ span $V$ (equivalently some subset of them is a basis), i.e. some $\Delta_{W \setminus J}(K) \neq 0$ with $J \subseteq \{1,\ldots, n\}$, and
\item for all $w$ internal $v_w$ is nonzero (equivalently $v_w$ is part of a basis with other vectors of $\v$), i.e. some $\Delta_S(K) \neq 0$ with $w \notin S$.
\end{itemize}
\end{proof}

Everything so far has only used the first two conditions of a system, namely that it is a forest with exactly one boundary vertex per component. The next result, which identifies the origin of each chart, clarifies the significance of the third condition.

\begin{proposition}
The origin $0 \in \mathbb{C}^{|E \setminus E'|}$ is in the image of $\phi_F$.  It represents a configuration where certain $k$ boundary vectors form a basis, the other boundary vectors are $0$, and each internal vector is proportional to the boundary vector in the same component.
\end{proposition} 

\begin{proof}
First note each single edge component of $F$ is balanced as it has $1$ black and $1$ white vertex.  Meanwhile, every other component has exactly one more white vertex than black.  Indeed the number of edges of the component is one less than the number of vertices (since it is a tree) and twice the number of black vertices (since each black vertex has degree $2$).  So the number of non-single edge components must equal $N-M = k$.  Let $J$ be the set of boundary vertices of these components.

Let $F_j$ be the component of $F$ containing boundary vertex $j$.  Fix $j \in J$ and $w$ a white vertex of $F_j$.  Then there is a unique matching in $F_j$ of all vertices other than $w$.  Indeed, each white vertex is paired with its neighbor on the unique path in $F_j$ from it to $w$ and each black vertex $b$ is paired with the one of its two neighbors not on the path from $b$ to $w$.  Letting $j$ vary, we get a characterization of every matching of $B$ into $W$ (using only edges of $F$):
\begin{itemize}
\item for each $j \in J$, the matching restricted to $F_j$ equals the matching described above excluding $w$ for some white vertex $w$ of $F_j$,
\item for each $j \notin J$, the matching must include the single edge of $F_j$.
\end{itemize}

Let $\pi$ be the matching where we choose to exclude the boundary vertex $j$ of $F_j$ for each $j \in J$.  Then $\pi$ is an almost perfect matching involving the white vertices $W \setminus J$.  Similarly, let $w \in W$ be any internal vertex.  Then $w$ is in $F_j$ for some $j \in J$.  Define $\pi_w$ to be the matching that excludes $w$ as well as the boundary vertices of $J$ other than $j$.  Then $\pi_w$ is a matching of $B$ with $W \setminus S$ where $S = (J \setminus \{j\}) \cup \{w\}$.

Consider the point $0 \in \mathbb{C}^{|E \setminus E'|}$ which corresponds to the zero-one matrix $K$ with $K_{bw}=1$ precisely for $\overline{bw} \in E'$.  The matchings $\pi$ and $\pi_w$ witness the conditions from Proposition \ref{prop:chartDense} for this point to be in the image of $\phi_F$.  Consider the associated configuration $\v$.  If $j \in \{1,\ldots, n \} \setminus J$ then $j$ is part of a single edge component.  The black vertex of this component gives the relation $v_j = 0$.  It follows that the $v_j$ for $j \in J$ must be a basis.  Each internal black vertex gives a relation $u+u' = 0$ among its two neighbor vectors in $F$.  By induction, all vectors in a given component of $F$ are proportional to each other.
\end{proof}

\begin{proof}[Proof of Theorem \ref{thm:smooth}] Let $\v \in \mathcal{C}_G$.  We will construct a system $F$ so that $\phi_F$ is defined at $\v$.  As such we get an identification of a neighborhood of $\v$ with an open set in affine space proving that $\mathcal{C}_G$ is smooth at $\v$.

Let $J \subseteq \{1,\ldots, n\}$ be such that $\{v_j : j \in J\}$ is a basis of $V$.  Then there is an almost perfect matching $\pi$ of $B$ with $W \setminus J$ such that $K_{bw} \neq 0$ for all $\overline{bw} \in \pi$.  We will start from the graph $F = (B \cup W, \pi)$ and add edges one at a time maintaining the properties
\begin{itemize}
\item $F$ is a forest
\item each component of $F$ includes at most one boundary vertex
\item each non-single edge component of $F$ is connected to the boundary and has all its black vertices degree $2$
\item each edge of $F$ has a nonzero coefficient $K_{bw}$
\end{itemize}
until all vertices are connected to the boundary.  The result will be a system $F$ with $\phi_F$ defined at $\v$.

Suppose we are at a stage where not all vertices of $F$ are connected to the boundary.  There are never isolated black vertices, so there must be a white vertex $w$ not connected to the boundary.  Since $v_w \neq 0$, it can be swapped into $\{v_j : j \in J\}$ so that the result is still a basis.  Suppose $j \in J$ is such that $v_j$ is the vector that got swapped out.  Then there is a matching $\pi'$ of $B$ that avoids the white vertices $(J \setminus \{j\}) \cup \{w\}$ and that has all edge variables nonzero.  The disjoint union of $\pi$ and $\pi'$ has degree $\leq 2$ at each vertex and degree $1$ at only $w$ and $j$.  As such it contains a path from $w$ to $j$ which we consider oriented in that direction.  Let $e$ be the first edge of this path whose target is connected to the boundary in $F$.  Then the source of $e$ is not connected to the boundary in $F$, so $e$ is not in $F$ and in particular $e$ is not in $\pi$.  Therefore $e$ is in $\pi'$ from which it follows that $e$ goes from black to white.  Adding $e$ to $F$ merges a single edge internal component to a boundary-connected component along a black vertex of the former.  All the properties listed above are easily verified.

As it is always possible to find an additional edge, the above process does not terminate until all vertices are connected to a boundary.  It follows at that point that $F$ is a system as desired.
\end{proof}

Combining with results from the previous section, we have that $\Phi : \mathcal{C}_G \to \Pi_{\mathcal{M}}$ maps a smooth variety to the positroid variety and restricts to an isomorphism from $\mathcal{C}_G^{\circ}$ to $T_G$. It would be interesting to characterize the image of $\mathcal{C}_G$ under $\Phi$. As suggested by the referee, a possible candidate could be the union of several Deodhar strata \cite{TW}, which are a refinement of positroid strata and are indexed by weighted networks resembling the coordinate charts associated to our systems. Although $\Phi$ is not surjective, it can resolve certain singularities of $\Pi_{\mathcal{M}}$ as the next example illustrates.

\begin{example}
Consider the plabic graph $G$ in Figure \ref{fig:singular}.  The four edges of a system $F$ have been labeled with $1$'s and the other three edges assigned coordinates $a,b,c$.  We have that $J = \{3,4\}$ are the boundary vertices of the non-single edge components of $F$ so we can take $v_3,v_4$ as a basis.  It is then possible to determine the vectors at the other three white vertices.  In these coordinates, the map $\Phi$ takes the form
\begin{displaymath}
(a,b,c) \in \mathbb{C}^3 \mapsto 
\left[ \begin{array}{cccc}
b & c & 1 & 0 \\
ab & ac & 0 & 1
\end{array} \right] \in \Pi_{\mathcal{M}} \subseteq Gr_{2,4}.
\end{displaymath}

\begin{figure}
\centering
\includegraphics[height=2in]{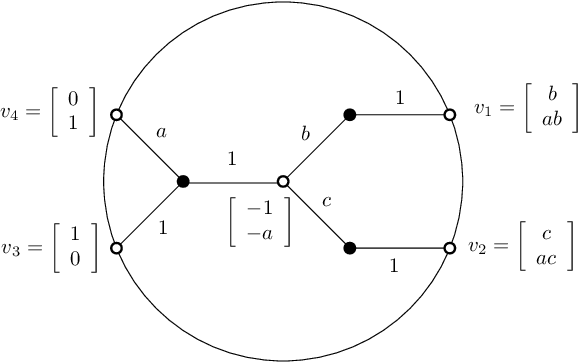}
\caption{An example of a coordinate chart on $\mathcal{C}_G$ coming from a system in $G$.}
\label{fig:singular}
\end{figure}

The target $\Pi_{\mathcal{M}}$ of $\Phi$ in this case is a Schubert variety defined in $Gr_{2,4}$ by the single equation $\Delta_{12} = 0$.  This variety has a unique singular point given in matrix form by 
\begin{displaymath}
A_{\textrm{sing}} = \left[ \begin{array}{cccc}
0 & 0 & 1 & 0 \\
0 & 0 & 0 & 1
\end{array} \right].
\end{displaymath}
If $A = [v_1 v_2 v_3 v_4] \neq A_{\textrm{sing}}$ then $v_1,v_2$ are dependent but not both zero, so they span a line.  The rightmost two black vertices in the plabic graph force the vector $u$ at the internal white vertex to lie on this line.  One can check, then, that the three relations of a configuration are always determined up to scale.  So $A$ has a unique preimage in $\mathcal{C}_G$.  

On the other hand, let $A = A_{\textrm{sing}}$.  Then $v_1=v_2=0$ and the internal vector $u \in \mathbb{C}^2$ becomes arbitrary.  As we only consider $u$ up to scale, we have a $\mathbb{P}^1$ worth of preimages.  This is the standard picture of the blowup of a variety at a point.  Alternately, we can analyze the situation in coordinates.  Restricting to the above chart we get a set $\{(a,0,0) : a \in \mathbb{C}\}$ of preimages of $A$.  The last preimage, corresponding to the point at infinity in $\mathbb{P}^1$, lies outside the chart.
\end{example}

\label{Bibliography}
\bibliographystyle{plain}
\bibliography{bibliographie}

\end{document}